\DeclareMathAlphabet{\mathcal}{OMS}{cmsy}{m}{n}
\definecolor{shadethmcolor}{cmyk}{0,0,0,0.05}
\definecolor{shaderulecolor}{cmyk}{0,0,0,0}\setlength{\shadeboxrule}{0pt}
\theoremstyle{definition}
\newtheorem{definition}{Definition}[section]
\newtheorem{lemma}[definition]{Lemma}
\newtheorem{construction}[definition]{Construction}
\newtheorem{remark}[definition]{Remark}
\newtheorem{remark+def}[definition]{Remark \& Definition}
\newtheorem{corollary}[definition]{Corollary}
\newtheorem{example}[definition]{Example}
\DeclareMathOperator{\capa}{cap}
\DeclareMathOperator{\var}{var}
\newcommand{\black}{\color[rgb]{0,0,0}}
\renewcommand{\b}[1]{\operatorname{\mathbb{#1}}}
\newcommand{\overbar}[1]{\mkern 1.5mu\overline{\mkern-1.5mu#1\mkern-1.5mu}\mkern 1.5mu}
\newcommand{\sumtwo}[2]{\sum_{\substack{#1 \\ #2}}} 
\newcommand{\cadlag}{c\`adl\`ag}
\newcommand{\sfrac}[2]{\mbox{$\frac{#1}{#2}$}}
\newcommand{\cC}{\mathcal{C}}
\newcommand{\cD}{\mathcal{D}}
\newcommand{\cF}{\mathcal{F}}
\newcommand{\cG}{\mathcal{G}}
\newcommand{\cN}{\mathcal{N}}
\newcommand{\cO}{\mathcal{O}}
\newcommand{\cP}{\mathcal{P}}
\newcommand{\cQ}{\mathcal{Q}}
\newcommand{\cX}{\mathcal{X}}
\newcommand{\E}{\mathbb{E}}
\newcommand{\IH}{\mathbb{H}}
\newcommand{\IG}{\mathbb{G}}
\newcommand{\II}{\mathcal{V}}
\renewcommand{\IJ}{\mathcal{E}}
\newcommand{\enter}{e}
\renewcommand{\P}{\mathbb P}
\newcommand{\N}{\mathbb N}
\newcommand{\1}{\operatorname{\mathbbm{1}}}
\renewcommand{\b}[1]{\operatorname{\mathbb{#1}}}
\renewcommand{\c}[1]{\operatorname{\mathcal{#1}}}
\newcommand{\dd}{\mathrm{d}}
\begin{document}
\title[Traces left by random walks]{Traces left by random walks on large random graphs: local limits}

\author[Steffen Dereich]{Steffen Dereich}

\maketitle

\vspace{-0.3cm}

\begin{quote}
{\small {\bf Abstract:} }
In this article, we develop a theory for understanding the traces left by a random walk in the vicinity of a randomly chosen reference vertex. The analysis is related to interlacements but goes beyond previous research by showing weak limit theorems for the vicinity around the reference vertex together with the path segments of the random walk in this vicinity.
Roughly speaking,  our limit theorem requires appropriate mixing properties for the random walk together with a stronger  variant of Benjamini-Schramm convergence for the underlying graph model. If these assumptions are satisfied, the limiting object can be explicitly given in terms of the  Benjamini-Schramm limit of the graph model. 
\end{quote}

\vspace{0.5cm}

{\footnotesize
\vspace{0.1cm}
\noindent\emph{MSc Classification:}  Primary 05C81 
Secondary 05C82, 
60C05, 
90B15. 

\noindent\emph{Keywords:} Random walk on random graph, Benjamini-Schramm convergence, local convergence, configuration model, interlacements}

\vspace{0.5cm}

\maketitle
\tableofcontents
\newpage

\section{Introduction}

In this article, we analyse reversible  random walks on large random or deterministic finite, rooted graphs. The aim is to understand the traces left by the  random walk  in the neighbourhood of the root, where typically the root vertex is a uniformly chosen vertex of the underlying graph. 
In our analysis,  we assume a strengthened version of  Benjamini-Schramm convergence \cite{Benjamini2000}, see also~\cite{AldousSteele}, and we impose an assumption on the mixing times of the random walks.

Local convergence techniques are the main tool for understanding the asymptotic behaviour  of large random graphs. For instance, the relative size of the largest cluster typically converges to the probability of the limitting graph (appearing as local limit) being infinite, see
\cite{ErRe61} for the Erd\H{o}s-R\'{e}nyi model and  \cite{MolRee98} and \cite{JansonLuczak2007} for results concerning random graphs with fixed degree sequence  (also known as configuration model). Similar results hold when applying percolation on the graphs.
 The theory is nowadays well established and it is the topic of several monographs, see~\cite{AlSp00, JLR00, Dur07, Hofstad2016i}.

More recently,  \cite{Hofstad21} showed that the relationship between the relative size of the largest component and the local limit becomes more transparent when replacing local convergence by a stronger type of convergence that is coined \emph{local convergence in probability} in the article. We face a similar problem in this article since the quantities that we need to control in the graph are not continuous in the topology of local convergence. A remedy for this will be a stronger type of  convergence. 


Since we consider a random walk on the largest component the respective percolation problem is \emph{vacant set percolation}, where one removes all sites seen by a random walk and then asks whether a large component remains. As observed by Sznitman in \cite{Szn10,SidSzn10} so called interlacements are central towards the understanding of such problems. 

When considering vacant set percolation on the configuration model one obtains analogies of the results known for classical percolation, where the relative size of the largest component for the vacant set equals the probability that in the limiting random rooted graph the root is in an infinite component after removal of all the sites seen by an interlacement, 
see \cite{CernyTeixeiraWindisch2010, CernyTeixeira2011,  CerHay20} for results concerning the configuration model. We stress that from a technical point of view the analysis of vacant set percolation often make strong use of  particularities of the configuration model as established in \cite{JansonLuczak2007}.


Our primary focus does not lie  on percolation (although we will explain how one can extend the approach taken in \cite{Hofstad21} to vacant set percolation). Instead we record information on the behaviour of the random walk around a possibly randomly picked root. We take the role of an observer placed in the root that  tracks the movement of the random walk once it comes into a certain distance to the observer for a certain amount of time. We prove limit theorems in total variation distance for the traces  seen by the observer. From a statistical point of view the distribution of the limiting object can be used as a prior distribution for filtering applications. An easy example for a statistic that then will converge in distribution is a function of the time spent in the vertices around the root. Significantly more elaborate statistics are included.  

From a mathematical point of view the problem is that incoming new paths may come from various directions and the probabilities governing this process are \emph{not} local quantities. To cope with this we demand to have control about the goodness of the local approximation (of the graph) which has to increase sufficiently fast as the size of the graph tends to infinity.  We formalise the meaning of the latter and we will relate this to the mixing times of the random walks.  Roughly speaking, one needs to be able to couple an exploration of the graph along the random walk started in the root for at least the mixing time. The analysis relies on new  estimates for the total variation distance of certain entrance locations and times  to an idealised decoupled random variable. The estimates are in spirit similar to the ones derived in \cite{Aldous1992}, but unknown to the best of our knowledge.



%
%

\subsection{The setup}

We consider random walk in random environment, where the environment is given in terms of a connected, undirected graph $G$ with vertex- and edge-weights.
For technical reasons we also allow multigraphs.
To be formally precise we let $\II$ and~$\IJ$ be disjoint, countable, totally ordered\footnote{The ordering allows us to later define particular operations on the graphs. The particular choice of the ordering has no impact on the results of the article.}  sets and denote by $\partial$ a cemetery state that is neither in $\II$ nor $\IJ$. We endow $\II$ and $\IJ$ with the discrete topology and conceive $\II\cup \{\partial\}$ and $\IJ\cup\{\partial\}$ as one-point compactifications of the latter sets. We will use $\II$ and $\IJ$ for the labels of the vertices and edges, respectively.  Formally  a weighted \emph{multigraph} (briefly \emph{$\IG$-graph}) is a tuple 
$G=(V,E,\psi,\alpha,\beta)$ consisting of
\begin{enumerate} \item[(i)] a non-empty set  $V\subset \II$, the set of \emph{vertices}, and $E\subset \IJ$,  the set of \emph{edges},
\item[(ii)]  a mapping $\psi$ that maps every $e\in E$ to an unordered $V$-valued pair $$\psi(e)=\langle  e',e''\rangle,$$ such that every vertex $v$ appears in at most finitely many pairs $\psi(e)$ ($e\in E$), 
\item[(iii)]  a mapping $\alpha:V\to(0,\infty)$, the \emph{vertex-weights}, and
\item[(iv)] a mapping $\beta:E\to(0,\infty)$, the \emph{edge-weights} $\beta$.
\end{enumerate} 
We denote by $\IG$ the set of all  such graphs. Formally, a \emph{graph isomorphism} taking a graph $G$ to a graph $\bar G$ is a mapping  $\varphi:V\cup E\to \bar V\cup \bar E$ such that $\varphi|_V:V\to \bar V$ and $\varphi|_{E}:E\to \bar E$ are one-to-one and for all $v\in V$ and $e\in E$
$$
\bar\alpha_{\varphi(v)}= \alpha_v, \, \bar \beta_{\varphi(e)}=\beta_e \text{ \ and \ }\bar \psi(\varphi(e))=\langle \varphi(e'), \varphi(e'')\rangle.
$$
A rooted graph is a graph $G\in\IG$ together with a designated vertex $o\in V$ and we briefly write for the graph $G$ with root $o$, $G^o$. We denote by $\IG^\mathrm{root}$ the set of rooted multigraphs and, additionally,  write $\IG^\mathrm{finite}$ ($\IG^\mathrm{finite,\, root}$) for the finite (rooted) $\IG$-graphs.

 A \emph{graph isomorphism} $\varphi$ between two rooted graphs $G^o$ and $\bar G^{\bar o}$ is a graph isomorphism that takes $G$ to $\bar G$ and additionally satisfies $\bar o=\varphi(o)$. We call two rooted or unrooted graphs \emph{equivalent} if there is a graph isomorphism taking one to the other. The concept agrees with the one used in \cite{Hof21} up to the additional weights and the fact that our graphs have explicit labels (from $\mathcal V$). Both will be needed to formally introduce the respective random walks. 

A \emph{generalised multigraph} is a tuple $G=(V,E,\psi,\alpha,\beta)$ where condition (ii) above is replaced by
\begin{enumerate}
\item[(ii')] a mapping $\Psi$ that maps every $e\in E$ to an unordered $V\cup\{\partial\}$-valued pair
$$
\psi(e)=\langle e',e''\rangle \not=\langle \partial,\partial\rangle,
$$
such that every vertex $v$ appears in at most finitely many pairs $\psi(e)$ ($e\in E$)
\end{enumerate}
and we write $\bar \IG$ for the respective set of graphs. In generalised graphs 
In analogy to above, we write $\bar\IG^\mathrm{finite,\, root}$ for the generalised graphs with designated root.

We let $\overline W_+$ denote the space of all paths $w:[0,\infty)\to \II\cup\{\partial\}$ for which there exists $t^\partial\in(0,\infty]$ such that  for all $t\in[0,\infty)$
$$
t<t^\partial   \ \Leftrightarrow \  w_t\in \II.
$$
We associate a graph $G\in\IG$ with the continuous time Markov process family $(P^x_G:x\in V)$ (with all paths lying in $W_+$) with generator 
 $L=L^G=(L^G_{x,y})_{x,y\in V}$  given by
\begin{align}\label{def:gen}
\begin{cases} L_{x,y}^G=  \sum_{e: \langle e',e''\rangle =\langle x,y\rangle}\beta_{e}/ \alpha _x, & \text{ for } x\neq y, \\
L^G_{x,x}=-\sum_{y\not =x} L^G_{x,y}=: -L^G_x, &  \text{ for } x\in V .\end{cases}
\end{align}
The respective Markov process will be called \emph{random walk associated with $G$}.
Note that for every distribution $\mu$, a $P^\mu_G$-Markov process can be generated iteratively as follows: let  $X_0$ be a $\mu$-distributed random variable and then iteratively repeat the following
\begin{itemize}
\item[(i)] stay in the current state, say  $v\in V$, for  an $\mathrm{Exp}(L_{v}^G)$-distributed amount of time and 
\item[(ii)] then choose an edge $e\in \{\langle e',e''\rangle: v\in \{e',e''\}\} $ with probability
proportionally to~$\beta_e$ and change the current state along the edge $e$ (meaning that the new state is just the endpoint of $e$ that is not equal to  $v$).
\end{itemize}
We denote by $S_1,\dots$ the time points at which transitions according to (ii) are carried out and set  $X_t=\partial$ for all $t\ge \sup_{n\in\N} S_n$.
We denote by $(X_t)_{t\ge0}$ the canonical process on the Markov family $(P^v:v\in V)$. In view of the above construction we mention that, in the case that the $L_G$-Markov process has multiple edges or loops, the edges along which transitions occur cannot be recovered by the states of the Markov process $(X_t)_{t\ge0}$ itself and we suppose that in the background there is always a construction as above and we can speak about the first, second, .. transition and assign each transition to a unique edge of the multigraph as above.

We note that by construction the Markov-process is $\alpha$-reversible. If the graph is connected and if $\alpha(V)=\sum_{v\in V} \alpha_v$ is finite, then the Markov process has as unique invariant distribution 
$$
\pi=\pi^G=\sfrac 1{\alpha(V)}\alpha.
$$
In this case, we denote for $\tau\ge0$
$$
d_G(\tau)= \sup_{v\in V}\, d_\mathrm{TV}(P_G^v(X_t\in \,\cdot\,),\pi^G).
$$
\black

\subsection{The visiting counting measure}\label{sec_visit}


Let $G\in\IG$. Now we introduce the central object that records  information about the visits to a certain region of the graph. Its definition relies on three parameters: the \emph{entrance region} $B\subset V$, the \emph{observation time} $\tau>0$ and the \emph{scaling factor} $a>0$. 
For a given choice of parameters   a trajectory $X=(X_t)_{t\geq 0}\in \overbar W_+$ (that later will be a random walk on the random environment $G$) is mapped to a counting measure on $\b S:=[0,\infty)\times W_+$, where $W_+$ denotes the space of \emph{transient paths}, i.e., 
\begin{align*}
W_+&=\bigl\{(w_t)_{t\geq 0}\in\overbar W_+ : \forall x \in \II \text{ \ the set  } \{t\in[0,\infty): w_t=x\} \text{ is bounded}\bigr\}\\
&=\bigl\{(w_t)_{t\geq 0}\in\overbar W_+ : \lim_{t\to\infty} w_t=\partial\bigr\}.
\end{align*}
  We set 
$$
T_1= \inf\{ t\geq 0 : X_t\in B\}
$$
and $$T_k=  \inf\{ t> T_{k-1}+\tau : X_t\in B\}, \text{ \ for \ }k=2,3,\dots$$
and call the sequence $(T_k:k\in\ N)$ the sequence of \emph{$(B,\tau)$-entrance times} of $X$. Further we denote for every  $k\in \N$ with $T_k<\infty$ by $(Y_k(t))_{t\geq 0}$  the $k$th \emph{$(B,\tau)$-visit} of $X$  given by
$$
Y_{k}(t)=\begin{cases} X_{T_k+t}, &\text{ if } t <  \tau , \\
\partial, & \text{ else.}\end{cases}
$$
For completeness we set for $k\in\N$ with $T_k=\infty$, $Y_k\equiv \partial$.
Finally, we define the \emph{$(B,\tau,a)$-visiting measure} to be the counting measure on $\b S$ given by
$$
\Xi=\sumtwo{k\in\N:}{T_k<\infty} \delta_{T_k/a, Y_k}.
$$
Roughly speaking, a new (macroscopic) visit to $B$ is initiated when the path hits $B$ again after time $\tau$ has passed since the last visit and the $k$-th visit is just the time changed trajectory of $X$ in the time interval $[T_k,T_k+\tau]$.
Note that the previous definitions make  perfect sense when the set $B$ and the graph $G$ are random.
%
%


\subsection{The main result}


We will use the concept of exploration convergence for a sequence of random $\IG^\mathrm{root}$-graphs  $(G_n^{o_n}:n\in\N)$ as introduced in Section~\ref{sec:expl_conv} below. 
Intuitively, a sequence of random $\IG^\mathrm{root}$-graphs $(G_n^{o_n}:n\in\N)$ converges $(\ell_n)$-locally to a random rooted standard graph $G^o$ (for an $\N$-valued sequence $(\ell_n)$) if exploration processes that collect information about the two 
 graphs $G_n^{o_n}$ and $G^o$ step by step can be coupled in such a way that they find the same structure modulo graph isomorphism with high probability as long as they are run for $\ell_n$ steps.

From now on for each $n\in\N$, $G_n^{o_n}$ denotes a random finite, connected  $\IG^\mathrm{root}$-graph  and  $X=X^{(n)}=(X^{(n)}_t)_{t\geq 0}$ denotes a \cadlag\ process on $\b I$ such that under $\P$ given  $G_n^{o_n}$ the process $X^{(n)}$ is an $L_{G_n}$-Markov process started in the unique $L_{G_n}$-invariant distribution $\pi_n:=\pi^{G_n}$. Moreover, $G^o$ will denote a random connected $\IG^\mathrm{root}$-graph. We denote for $R\in\N_0$, by $V^o[R]$ 
the vertices of $G$ 
that have graph distance (hop-count distance) less or equal to $R$ to the root $o$. Moreover,  $\partial V^o[R]$ denotes the set of vertices of $G$ with graph distance  $R$ to the root vertex. We use analogous notation for the graphs $G_n$ and write $V_n^{o_n}[R]$ and $\partial V_n^{o_n}[R]$ for the respective sets.

%
%
\medskip
\begin{theorem}[Coupling visiting measures] \label{prop:2746} 
Let $(\ell_n')_{n\in\N}$ and $(\ell_n)_{n\in\N}$ be $\N$-valued and $(\tau_n)_{n\in\N}$ and $(a_n)_{n\in\N}$ be $(0,\infty)$-valued with all three sequences tending to infinity and suppose that the random finite connected  $\IG^\mathrm{root}$-graphs  $G_1^{o_1},\dots$  converge $(\ell_n)$-locally to the connected $\IG^\mathrm{root}$-graph $G^o$. Moreover, suppose that $\ell_n'=o(\ell_n)$, $\tau_n=o(\alpha_n(V_n))$, 
$$
\lim_{n\to\infty} d_{G_n}(\tau_n)=0, \ \ \lim_{n\to\infty} P_{G_n}^{o_n}\bigl(\# \text{ range}\bigl((X_t)_{[0,2\tau_n]}\bigr) >\ell_n'\bigr)   =0\text{ \ and }
$$
$$
\lim_{n\to\infty}\frac{\alpha_n(V_n)}{a_n}=\mathfrak a ,\text{ in probability.}
$$
Let $R,R'\in\N$ and $\sigma>0$. For $n\in\N$, let $\Xi_n$ be the $(V_n^{o_n}[R],\tau_n,a_n)$-visiting measure and let $\Gamma$ be the Cox-process that is conditionally on $G^o$ a Poisson point process with intensity
\begin{align}\label{eq94624}
\sfrac 1{\mathfrak a} \mathrm{Leb}|_{[0,\infty)} \otimes P^{e_{V^{o}[R],G}}_G
\end{align}
and let $\Gamma_n$ be the point process obtained from $\Gamma$ by killing all paths at time $\tau_n$.
 One can couple $(G^{o_n}_n,\Xi_n)$ and $(G^o,\Gamma)$ such that, the following holds, with high probability:
\begin{itemize}
\item There are subsets $\bar{\mathcal V}_{n}\supset V_n^{o_n}[R']$ and ${\mathcal V}_{n}\supset V^o[R']$ of $V_n$ and $V$, respectively, and a graph isomorphism taking the $\bar {\mathcal V}_n$-disclosed graph of $G_n^{o_n}$ to the $\mathcal V_n$-disclosed graph of $G^o$.
\item The trajectories appearing in $\Xi_n|_{[0,\sigma]\times W_+}$ do not leave $\bar {\mathcal V}_n$ and the point process $\Gamma_n|_{[0,\sigma]\times W_+}$ is obtained from  $\Xi_n|_{[0,\sigma]\times W_+}$ by applying the isomorphism  $\varphi_n$ on the states of the trajectories. 
\end{itemize}
\end{theorem}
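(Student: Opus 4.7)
The plan is to produce the desired coupling by combining two reductions: a graph-theoretic coupling from $(\ell_n)$-local convergence and a marked-Poisson approximation for the sequence of entrance times and locations of the random walk.

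For the first reduction, exploration convergence of $G_n^{o_n}$ to $G^o$, together with $\ell_n'=o(\ell_n)$, yields a coupling on one probability space and disclosed subsets $\bar{\mathcal V}_n\supset V_n^{o_n}[R+R'+\ell_n']$, $\mathcal V_n\supset V^o[R+R'+\ell_n']$, and an isomorphism $\varphi_n$ between the two disclosed graphs, valid with high probability. By stationarity of $X^{(n)}$, the range bound from the hypothesis applies to every time-interval of length $2\tau_n$, so a union bound over the (tight) number of visits in $[0,\sigma a_n]$ shows that each $(V_n^{o_n}[R],\tau_n)$-visit has its range inside $V_n^{o_n}[R+\ell_n']\subset\bar{\mathcal V}_n$.

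The second reduction is the Poisson approximation of $(T_k,X_{T_k})_{k\ge1}$. Because each inter-entrance gap exceeds $\tau_n$ and $d_{G_n}(\tau_n)\to 0$, the walk at time $T_k+\tau_n$ is $o(1)$-close in total variation to $\pi_n$; one can then couple the next pair $(T_{k+1}-T_k-\tau_n,\,X_{T_{k+1}})$ with a product of an exponential of rate $|e_n|/\alpha_n(V_n)$ and the normalised equilibrium measure $e_n/|e_n|$ on $V_n^{o_n}[R]$, where $e_n:=e_{V_n^{o_n}[R],G_n}$. Since $\alpha_n(V_n)/a_n\to\mathfrak a$ and $\tau_n=o(a_n)$, the number of visits on $[0,\sigma a_n]$ is tight (of order $\sigma|e_{V^o[R],G}|/\mathfrak a$) and the rescaled $\tau_n$-gaps vanish. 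Iterating the one-step $\mathrm{TV}$-bound finitely many times couples $((T_k/a_n,Y_k))_k$ with a marked Poisson process of rate $|e_n|/a_n\to|e_{V^o[R],G}|/\mathfrak a$ and i.i.d.\ marks distributed as walks from $e_n/|e_n|$. The graph coupling then transports $e_n$ and the trajectories to $G^o$ via $\varphi_n$; killing the limiting marks at time $\tau_n$ produces exactly $\Gamma_n|_{[0,\sigma]\times W_+}$.

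The main obstacle is the Poisson approximation: $|e_n|$ and $e_n$ are non-local functionals of $G_n$ and are not continuous in the classical Benjamini--Schramm topology, which is precisely why the stronger $(\ell_n)$-local convergence is used. One needs the new $\mathrm{TV}$-estimates announced in the introduction (in the spirit of \cite{Aldous1992}) that couple entrance time \emph{and} location jointly from a nearly-mixed starting distribution, and the mixing hypothesis $d_{G_n}(\tau_n)\to 0$ is exactly the input that makes these estimates applicable. Quantitative control is required since the estimate must be iterated over the $O(1)$-many visits in $[0,\sigma]$; combining this control with the $(\ell_n)$-local coupling is what allows the limiting rate and entrance law to be re-expressed intrinsically in terms of $G^o$.
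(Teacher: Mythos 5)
Your two reductions correctly mirror the paper's overall architecture: couple the explorations of $G_n^{o_n}$ and $G^o$ using $(\ell_n)$-local convergence, and use the Aldous--Brown type total variation estimates (Propositions~\ref{prop:Aldous}, \ref{prop_1241}, \ref{prop8743}) driven by $d_{G_n}(\tau_n)\to 0$ to replace the sequence of entrance times and locations by a marked Poisson process with intensity determined by the $\tau_n$-equilibrium measure $e_n:=e_{V_n^{o_n}[R],G_n}^{\tau_n}$. You also correctly isolate the central obstacle: $e_n$ and $\|e_n\|$ are \emph{not} local quantities, since the escape probabilities $P^z_{G_n}(T_{V_n^{o_n}[R]}>\tau_n)$ entering their definition depend on the graph far from the root.

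However, your proposal does not actually resolve this obstacle, and the step where you write that ``the graph coupling then transports $e_n$ and the trajectories to $G^o$ via $\varphi_n$'' is precisely where the argument breaks. The isomorphism $\varphi_n$ only matches a finite disclosed portion $\bar{\mathcal V}_n$ of $G_n$ to a finite portion of $G^o$. The escape probabilities defining $e_n$ depend on how paths behave \emph{after} leaving the disclosed region, so $\varphi_n$ cannot carry $e_n$ to $e_{V^o[R],G}$. Simply combining the TV iteration on $G_n$ with the local coupling leaves you with two Poisson intensities, $\mathfrak a^{-1} e_{V^o[R],G}$ on the limit graph and $a_n\alpha_n(V_n)^{-1}e_n$ on $G_n$, whose closeness you have not established.

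The missing idea is the paper's Construction~\ref{constr1}: one builds an \emph{empirical estimate} $\mathfrak e^{(n)}$ of the equilibrium measure from the first $m_n$ exploration trajectories themselves (counting the fraction of paths that, after first hitting a boundary vertex $v\in\partial V^o[R+1]$, do not return to $V^o[R]$ for time $\tau_n$). This estimator is a measurable function of the exploration $\mathfrak E_{\ell_n}^{(n)}$, so the coupled explorations carry the same estimator on both graphs. The law of large numbers (as $m_n\to\infty$ and $\tau_n\to\infty$) shows $\mathfrak e^{(n)}\to e_{V^o[R],G}$ on $G^o$ and $\bar{\mathfrak e}^{(n)}\to e_{V_n^{o_n}[R],G_n}^{\tau_n}$ on $G_n$, and one then compares the resulting Cox processes to the two Poisson approximations separately. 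Without this (or an equivalent device that makes the equilibrium measure accessible to the exploration coupling) your two reductions cannot be spliced together, so the proposal has a genuine gap at the crux of the argument.
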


\begin{remark}[Interlacement representation] \label{rem:235462} Following \cite{Szn10} the measure $P_G^{e_{V^o}[R],G}$ appearing in the theorem admits a description in terms of interlacements. Let $\bar W$ be the space of all \cadlag\  paths $w:\mathbb R\to \II\cup\{\partial\}$ such that there exist $-\infty\le t_\partial <t^\partial\le \infty$ so that for all $t\in\b R$
$$
t_\partial \le t<t^\partial \ \Leftrightarrow \  w_t\in \II. 
$$
We let $W$ denote the subspace of $\overline W$ consisting of transient paths, i.e.,
$$
W=\{(w_t)_{t\in\b R} \in \overline W: \forall v\in \II \text{ the set } \{t\in\b R:w_t=v\}\text{ is bounded}\}
$$
We call two paths $(w_t)$ and $(\bar w_t)$ of $W$ equivalent ($\sim$), if there exists $\tau\in\b R$ such that $w_t=\bar w_{\tau+t}$ for all $t\in\b R$. Note that $\sim$ is an equivalence relation and we denote by $W^*=W/\!\sim$ the set of equivalence classes. Following \cite{Szn10} we equip $W^*$ with the finest  $\sigma$-field such that the embedding $$W\ni w \mapsto [w]\in W^*$$
is measurable\footnote{Here $[w]$ denotes the $\sim$-equivalence class of $w$.}.
Then for every  connected fixed graph $G^o\in \IG^\mathrm{root}$ for which $P_G$ is transient we have that there exists a $\sigma$-finite measure $\nu_G$ (an \emph{interlacement}) on $W^*$ such that for every $R\in\N$ 
\begin{align}\label{eq8472}
P_G^{e_V^o[R],G} = \nu_G|_{W^*_{V^o[R]}} \circ \pi_{V^o[R]}^{-1},
\end{align}
where $W^*_{V^o[R]}$ denotes the set of transient paths modulo $\sim$ that enter $V^o[R]$ and $$\pi_{V^o[R]}:W^*_{V^o[R]}\to W_+, \, [(w_t)_{t\in\b R} ]\mapsto (w_{T_{V^o[R]}(w)+t})_{t\ge0}$$ 
sends an equivalence class $[(w_t)]\in W^*_{V^o[R]}$ to the trajectory that is following the trajectory $(w_t)$ from its first entrance into $V^o[R]$ onwards. Based on~(\ref{eq8472}) we get an alternative representation  for the intensity measure (\ref{eq94624}) based on interlacements.
\end{remark}

\black

A particular family of random graphs for which the assumptions of the main theorem can be verified are sparse Erd\H{o}s-R\'enyi random graphs and random graphs  with fixed degree sequence.

\begin{example}[Random graphs with fixed degree sequence]\label{exa2}
For every $n\in\N$ we denote by $\mathbf{d}^{(n)} =(d_k^{(n)})_{k=1,\dots,n}$ an $\N_0$-valued sequence so that $N^{(n)}:=\sum_{k=1,\dots,n} d_k^{(n)}$ is even and strictly positive. The random graph model with fixed degree sequence $(\mathbf d^{(n)})_{n\in\N}$ is constituted by a sequence of random multigraphs that is formed by pairing the  half-edges in 
$$
\IH_n=\bigcup_{j\in\{1,\dots,n\}}  \{j\} \times \{1,\dots, d_j^{(n)}\}
$$ 
and forming the random multigraph  $G_n$ by taking as vertex set $\{1,\dots,n\}$ and establishing for each pair $\langle (j_1,r_1),(j_2,r_2)\rangle$ of the latter random pairing an edge connecting $j_1$ and $j_2$. The edge-weights are chosen to be one and the vertex-weights are chosen as the respective degrees.

We denote by $\cC_n^\mathrm{max}$ the largest component of $G_n$ and let $\bar G_{n}^{\bar o_{n}}$ denote the graph $G_{n}$ restricted to $\cC_n^\mathrm{max}$ equipped with a uniformly chosen root.
As parameters of the generator of the underlying random walk we choose $\alpha$ to be the degree and $\beta$ to be constantly one.

\begin{enumerate}
\item \emph{Mixing times.} Provided that for large $n\in\N$,  
\begin{align}\label{eq78234} 3\le \min_{j=1,\dots,n} d_j^{(n)}\le \max_{j=1,\dots,n} d_j^{(n)}\le n^{0.02}\end{align}
 one has that there exists $\alpha>0$ such that, with high probability, $G_n$ is an $\alpha$-expander \cite[Lemma 5.3]{BKW14}. Moreover, with high probability, $G_n$ is connected. This entails that provided that for a finite constant $C$, with high probability, $\sum_{j=1}^n d_j^{(n)}\le C n$, one has w.h.p.\ that the minimal mass of the stationary distribution tends to zero of order $\frac 1n$ and the mixing time is of order $\mathcal O(\log n)$. 
\item \emph{Local convergence.} We denote by $D_n$ an $\N_0$-valued random variable with distribution
$$
\frac 1n\sum_{k=1}^n \delta_{d_k^{(n)}}
$$ 
and let $D_n^*$ denote a $\N$-valued random variable with size biased  distribution
$$
\frac 1{N^{(n)}}\sum_{k=1}^n d_k^{(n)} \delta_{d_k^{(n)}}.
$$ 
As is shown in the appendix (see Theorem~\ref{theo:12} and Remark~\ref{rem:246}.(a)) we have the following: Suppose  that for an $\N$-valued sequence $(\ell_n)_{n\in\N}$ and for $\N_{0}$-valued random variables $D$ and $D^*$, one has 
$$
\ell_n=o\bigl(\sqrt n \, \E[D_n^*\wedge \sqrt n]^{-1}\bigr)
$$
 and 
 \begin{align}\label{eq:73485}\lim_{n\to\infty}d_\mathrm{TV}(\P_{D_n},\P_D)+\ell_n \, d_\mathrm{TV}(\P_{D_n^*},\P_{D^*})=0.
\end{align}
Then the respective sequence $(G_{n}^{o_{n}})_{n\in\N}$ of uniformly rooted random graphs with degree sequence $(\mathbf d^{(n)})_{n\in\N}$ converges $(\ell_{n})$-locally to a $\mathrm{GWP}(D,D^{*})$-tree as introduced in the appendix. 

Suppose now that the $\mathrm{GWP}(D,D^{*})$-tree is infinite with strictly positive probability. Then by   Corollary~\ref{cor:823757} the rooted graphs $(\bar G^{\bar o_{n}})$ converge $(\ell_{n})$-locally to the $\mathrm{GWP}(D,D^{*})$ process conditioned to be infinite.
\item \emph{Application of the main theorem.} In the situation where we can apply (a) and (b) above, we choose $a_{n}=n$ and, moreover, $(\tau_{n})$, $(\ell'_{n})$ and $(\ell_{n})$ such that
$$
\lim_{n\to\infty}\frac{\log n}{\tau_{n}}=0, \ \lim_{n\to\infty}\frac{\tau_{n}}{\ell_{n'}}=0, \lim_{n\to\infty}\frac{\ell_{n'}}{\ell_{n}}=0 \text{ and }  \lim_{n\to\infty}\frac{\ell_{n}}{\sqrt n \, \E[D_n^*\wedge \sqrt n]^{-1}}.
$$
This is easily established since assumption (\ref{eq78234}) in  (a) entails that $$\sqrt n\, \E[D_{n^{*}}\wedge \sqrt n]^{-1}\ge n^{0.48}.$$
The number of jumps of the random walk in the time interval $[0,2\tau_{n}]$ is by choice of the walk Poisson-distributed with parameter $2\tau_{n}$ so that obviously the range constraint is satisfied.  
We assume that~(\ref{eq:73485}) is satisfied for random variables $D$ and  $D^{*}$. 

Recall that $G_{n}$ is with high probability connected and
$$
\alpha_{n} (V_{n})=\sum_{k=1}^{n} d_{k}^{(n)}=n\, \E[D_{n}].
$$
Next, we show that $\lim_{n\to\infty}\E[D_{n}]=\E[D]$. Note that for all $k\in\N$,
$\P(D_{n}^{*}=k)=\E[D_{n}]^{-1}k \,\P(D_n=k)$. Pick an index $\ell\in\N$ with $\P(D^{*}=\ell)>0$ and observe that
$$
\lim_{n\to\infty}\E[D_{n}]^{-1}\ell \,\P(D_n=\ell)= \P(D^{*}=\ell)>0
$$
 and 
 $$ \limsup_{n\to\infty} \E[D_{n}]\le \ell \,\P(D=\ell) \,\P(D^{*}=\ell)^{-1}<\infty.
$$
Since
 $\lim_{n\to\infty}d_{\mathrm{TV}}(\P_{D_{n}^{*}},\P_{D^{*}})=0$ we get that
\begin{align*}
\lim_{n\to\infty}\E[D_{n}]^{-1} \sum_{k=1}^{\infty} \bigl| k \,\P(D_n=k)-\E[D_{n}]\,\P(D^{*}=k)\bigr|=0.
\end{align*}
Since $(\E[D_{n}])$ is bounded it follows that
$$
\Bigl| \sum_{k=1}^{\infty}  k \,\P(D_n=k)-\E[D_{n}] \Bigr| \le \sum_{k=1}^{\infty} \bigl| k \,\P(D_n=k)-\E[D_{n}]\,\P(D^{*}=k)\bigr|\to0
$$
so that $\E[D_{n}]\to \E[D]$ with a finite limit. Thus we need to choose $\mathfrak a=\E[D]$.

Consequently, the visiting measure converges to the respective limit process on the $\mathrm{GWP}(D,D^{*})$ tree.
\end{enumerate}
\end{example}

\begin{example}[Erd\H{o}s-R\'enyi random graphs]\label{exa1}We let $(m(n))_{n\in\N}$ be a $\N$-valued sequence such that
$$
\lambda=\lim_{n\to\infty} \frac{m(n)}{n}
$$
exists and lies in he interval $(1,\infty)$. Denote for every $n\in\N$, by $G_n$ a uniformly chosen graph under all (simple) graphs with vertex set $\{1,\dots,n\}$ with $m(n)$ edges\footnote{If $m(n)>{n\choose2}$, then there are no such graphs. In that case we pick the complete graph.}. The edge weights are chosen to be one and the vertex weights are equal to the respective degree. We denote by $\cC_n^\mathrm{max}$ the largest component of $G_n$. As shown in \cite{BKW14}, one has, with high probability, that the mixing time is of order $\Theta\bigl((\log n)^2\bigr)$ for the random  walk on the maximal cluster $\cC_n^\mathrm{max}$ of $G$. Let $(\ell_{n})_{n\in\N}$ be an $\N$-valued sequence with $\ell_n=o(\sqrt n)$.
Similarly, as in the appendix one can show that the uniformly rooted graph $G_{n}^{o_{n}}$ converges $(\ell_n)$-locally to a $\mathrm{GWP}(D,D+1)$-tree, where $D$ denotes a Poisson-distributed random variable with parameter $\lambda$. Then again one can use Corollary~\ref{cor:823757} to deduce the local limit for the graph restricted to the giant component. One still needs to verify the convergence towards an appropriate $\mathfrak a$ in the respective model.
\end{example}

\subsection{
Exploration convergence}\label{sec:expl_conv}

{ 
Classical Benjamini-Schramm convergence is a form of  weak convergence for random rooted graphs. More explicitly, a sequence of random rooted graphs $G_n^{o_n}$ converges to a random rooted graph $G^o$ in the sense of Benjamini-Schramm~\cite{Benjamini2000} if  for every radius $R\in\N$ the rooted graphs can be coupled such that (after relabeling) the $R$-neighbourhoods of $G_n^{o_n}$ agree with $G^o$ in the limit with probability one. 
In this article we work with a different notion of local convergence, one which incorporates a control of the quality of the latter approximation depending on the index $n$. For this we will use the concept of an exploration process.

 We start with the formal definition of an \emph{exploration rule}.
Recall that $\bar {\b G}^{\mathrm{finite,\, root}}$ is the set of finite, rooted, generalised graphs. 
An \emph{exploration} is a tuple $$\mathfrak E=(\mathcal G,(e_1,\dots,e_n))\in \bar {\b G}^{\mathrm{finite,\, root}}\times \IJ^*$$
so that for each $i=1,\dots,n$ the edge $e_i$ is in $\mathcal G$ and the edge $e_i$ has both of its endpoints in the set of vertices in $\mathcal G$. 
 
 An \emph{exploration rule} $K$ is a probability kernel that assigns an exploration $(\mathcal G,(e_1,\dots,e_n))$ to  a distribution on the edges of $\mathcal G$ that is invariant under isomorphisms. More explicitly, for an exploration $\mathfrak E=(\mathcal G,(e_1,\dots,e_n))$ and a graph isomorphism $\phi$ taking the exploration $\mathfrak E$ to $\mathfrak E'=\phi(\mathfrak E)$ (meaning that $\mathfrak E'=(\phi(\mathcal G),(\phi(e_1),\dots,\phi(e_n)))$) we have
 $$
 K(\mathfrak E,\varphi\in \,\cdot\, )= K(\mathfrak E',\,\cdot\,).
 $$

For a graph $G$ and a finite subset $V_0\subset V$ of vertices we build  the \emph{$V_0$-disclosed subgraph} $G_{|V_0}$ of $G$ as follows: it is the generalised weighted finite graph that
\begin{itemize}
\item has vertices $V_0$ and edges 
$\{e\in E(G): \{e',e''\}\cap V_0\not=\emptyset\}$\\ (all edges with one of the endpoints lying in $V_0$)
\item where  every edge connecting to  a vertex outside of $V_0$ is redirected to $\partial$  and
\item all vertex and edge weights are as in $G$.
\end{itemize}

An \emph{exploration process} is now a stochastic process that is defined for an arbitrary  rooted graph $G^o$ and an exploration rule $K$ as follows: $\mathfrak E_{0}=(\mathcal G_0^o, \emptyset)$ where $\mathcal G^o_0$ is the $V(0)=\{o\}$-disclosed subgraph of $G$ with root $o$ and for  every $k=1,\dots$ we iteratively
\begin{itemize}
\item choose an $K(\mathfrak E_{k-1}; \,\cdot \,)$-distributed edge $e_k$,
\item form $V(k)$ by adding to $V(k-1)$ the endpoints of $e_k$ in the graph $G$ (if $e_k=\partial$ no vertices are added) and
\item let $\mathfrak E_{k}=(\mathcal G_k^o,(e_1,\dots,e_k))$ where $\mathcal G_k^o$ denotes the $V(k)$-disclosed subgraph of~$G$ with root $o$.
\end{itemize}
Note that during the exploration the disclosed subgraph changes whenever the exploration rule picks an edge that leaves the set of previously disclosed vertices.
 In the case that $G^o$ is itself random we run the exploration with the respective conditional distributions.

}

\begin{definition}
We call an exploration rule $K$ \textit{proper} if for every $\c E\in \b G^\mathrm{expl}$ with at least one edge connecting to $\partial$ the distribution   $K(\c E, (e_1,\dots, e_k);\,\cdot \,)$ puts all mass on edges connected to $\partial$. 
\end{definition}

\begin{example}[Breadth-first exploration]  
Choosing $K(\c E, (e_1,\dots, e_k); \,\cdot\,)$ as the uniform distribution on all edges connecting to $\partial$ with minimal distance to the root, leads to the \textit{breadth-first} exploration process of a component of the graph.   
\end{example}
 
\begin{example}[Exploration governed by the Markov procecss] \label{exa_MP} Let $G^o\in \IG^\mathrm{root}$.  We denote by  $(X_t)_{t\in[0,\infty)}$ a $P^o_G$-distributed process and let for $k\in\N_0$
$$
S_k=\inf \bigl\{t\ge 0: \# \mathrm{range} \bigl((X_s)_{s\in[0,t]}\bigr)> k\bigr\}.
$$
Moreover, let for every $k\in\N_0$, $e_k$ denote the random edge that is traversed by the Markov process at time $S_k$ if the latter time is finite and $\partial$ otherwise. Note that one can read of the state of the Markov process at time $S_k$ from $\mathfrak E_{k-1}$ and the edge passed by $X$ at that time. Given $(X_t)_{t\in [0,S_k]}$, the conditional distribution of $((X_{t-S_k})_{t\in[0,S_{k+1}-S_k)}, e_{k+1})$ is just the distribution of the Markov process started in $X_{S_k}$ that is killed immediately when leaving the set of disclosed vertices in $\mathfrak E_k$ with $e_{k+1}$ being the edge traversed by the Markov process when exiting. Obviously, the conditional distribution is $\mathfrak E_k$-measurable and invariant under graph isomorphisms. Therefore, this defines an exploration rule. We call this exploration \emph{exploration along the Markov process}.
 
A slight modification of this exploration rule is obtained by fixing $\tau>0$ and letting $X^{1},\dots$ independent $P_G^o$-distributed processes and by exploring new edges along $(X^1_t)_{t\in[0,\tau]}, \,(X^{2}_t)_{t\in[0,\tau]},\,\dots$ consecutively. Meaning that one explores new edges ones one of the paths hits a vertex that has not yet been visited before. We call this exploration \emph{$\tau$-exploration along the Markov process}.
\end{example}

\begin{definition}\label{benjaminischramm} Let $G^ o,G^{o_1}_1,\dots$ be random $\IG^\mathrm{root}$-graphs and let $G$ be connected. Furthermore, let $(K_n)$ be a sequence of  exploration rules and  $(\ell_n)$ be an $\N$-valued sequence. We say that $(G_n^{o_n})_{n\in\N}$  \emph{converges $(K_n,\ell_n)$-locally} to $G^o$ if there exists a coupling of the $K_n$-explorations $\bar{\mathfrak  E}^{(n)}$ of $G_n^{o_n}$ and $\mathfrak E^{(n)}$ of $G^o$ such that
$$
\lim_{n\to\infty} \P(\bar{\mathfrak E}^{(n)}_{\ell_n}\sim \mathfrak E^{(n)}_{\ell_n})= 1.
$$
If one has  local convergence along  every sequence of rules $(K_n)$, then we briefly say that $(G_n^{o_n})_{n\in\N}$  \emph{converges $(\ell_n)$-locally} to  $G^o$.   
\end{definition}

For many network models one can prove local convergence in the sense of Benjamini-Schramm when choosing the root vertex uniformly at random. It is a technical issue to derive $(\ell_n)$-local convergence for sufficiently slowly growing $(\ell_n)$. Often one encounters the following situation, e.g., for the configuration model, inhomogeneous graph model or preferential attachment model: in the limit the graph possesses one giant component (meaning that the relative number of vertices in the largest component is typically close to a deterministic value $p>0$) with all other components being significantly smaller. 
In this case, we consider the restricted graph $G_n^{o_n}$ onto the giant component conditionally on choosing the root $o_n$ from the giant component. We provide technical results that allow to deduce from $(\ell_n)$-local convergence for the unconditional setting the respective $(\ell_n)$-local convergence for the restricted graphs with the root vertex chosen uniformly from the giant component.

\begin{proposition}\label{prop8723} Let $(\ell_n)$ be an $\N_0$-valued sequence converging to infinity.
Let $(G_n^{o_n})_{n\in\N}$ be a sequence of random, finite $\IG^\mathrm{root}$-graphs that converges $(\ell_n)$-locally to a connected random $\IG^\mathrm{root}$-graph $G^o$ and suppose that $\P(G\text{ is infinite})>0$. Then there is an $\N$-valued sequence  $(\rho_n)_{n\in\N}$ that converges monotonically to $\infty$ for which
\begin{align}\label{eq4721}
\liminf_{n\to\infty}  \,\P(\#\cC_n(o_n)\ge \rho_n) \ge \P(G\text{ is infinite}),
\end{align}
where $\cC_n(o_n)$ denotes the component of $o_n$ in $G_n$, and for every such $(\rho_n)$ we have that $G_n^{o_n}$ under $\P_{|\{\cC_n(o_n)\ge \rho_n\}}$ converges $(\ell_n)$-locally to the graph $G^o$ under $\P_{|\{G\text{ is infinite}\}}$.
\end{proposition}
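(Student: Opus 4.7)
I write $A_n=\{\#\cC_n(o_n)\ge\rho_n\}$ and $A=\{G\text{ is infinite}\}$ and use breadth-first search (BFS) as a universal diagnostic for $A_n$ versus $A$. For (\ref{eq4721}), apply the hypothesis with the BFS rule to obtain a coupling $\pi_n^{\mathrm{BFS}}$ of $\ell_n$-step BFS-explorations agreeing whp. Let $N_n$ denote the number of vertices disclosed by BFS of $G^o$ after $\ell_n$ steps. Since BFS is exhaustive and $\ell_n\to\infty$, $N_n\nearrow\infty$ almost surely on $A$, and a standard diagonalization (choose $n_M$ with $\P(N_n\ge M\mid A)\ge 1-1/M$ for $n\ge n_M$ and set $\rho_n=\max\{M:n_M\le n\}$) yields a monotone $\rho_n\nearrow\infty$ with $\P(N_n\ge\rho_n,A)\to\P(A)$. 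On $\{\text{BFS agree}\}\cap\{N_n\ge\rho_n\}\cap A$ the BFS-disclosed subgraph of $G_n^{o_n}$ has at least $\rho_n$ vertices inside $\cC_n(o_n)$, so $A_n$ holds, giving $\liminf_n\P(A_n)\ge\P(A)$.

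For the conditional convergence, fix any $(\rho_n)$ satisfying (\ref{eq4721}) and any rule sequence $(K_n)$. I first check that under $\pi_n^{\mathrm{BFS}}$ one has $\pi_n^{\mathrm{BFS}}(A_n\triangle A)\to 0$: on $\{\text{BFS agree}\}\cap A^c$ the BFS of $G^o$ terminates with $|G^o|<\infty$ vertices, so the BFS of $G_n^{o_n}$ also terminates with $\#\cC_n(o_n)=|G^o|<\rho_n$ eventually, forcing $A_n^c$. This yields $\pi_n^{\mathrm{BFS}}(A_n\cap A^c)\to 0$, and combined with $\pi_n^{\mathrm{BFS}}(A)=\P(A)$ and (\ref{eq4721}) also $\P(A_n)\to\P(A)$ and $\pi_n^{\mathrm{BFS}}(A\cap A_n^c)\to 0$. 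To incorporate the $K_n$-exploration, I apply the hypothesis to a combined rule $\tilde K_n$ whose exploration state carries two disjoint sub-exploration channels — one for BFS and one for $K_n$ — with each sub-rule consulting only its own channel, so that the $K_n$-sub-record coincides in distribution with a stand-alone $K_n$-exploration. Under $\pi_n^{\tilde K_n}$ the BFS-channel gives $\pi_n^{\tilde K_n}(A_n\triangle A)\to 0$ by the same argument, while the $K_n$-channel delivers agreement of the $K_n$-sub-explorations at step $\ell_n$. Conditioning $\pi_n^{\tilde K_n}$ on $A_n\cap A$ — an event whose probability tends to $\P(A)>0$ — yields a coupling whose marginals differ from $\P(\,\cdot\,|A_n)$ and $\P(\,\cdot\,|A)$ by $o(1)$ in total variation (a short calculation from $\pi_n^{\tilde K_n}(A_n\triangle A)\to 0$), and on which the $K_n$-sub-explorations still agree whp. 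This gives the desired conditional $(K_n,\ell_n)$-local convergence once the total-variation slack is absorbed into the high-probability statement.

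The main obstacle I foresee is formalising $\tilde K_n$ as a legitimate exploration rule in the sense of Section~\ref{sec:expl_conv}: the state must be designed so that both sub-rules see only their own channel's disclosed subgraph, and one must verify that the $K_n$-channel's output distribution is unaffected by the BFS-channel's activity, so that the extracted $K_n$-sub-exploration is genuinely an instance of the $(K_n,\ell_n)$-exploration in Definition~\ref{benjaminischramm}. This amounts to careful bookkeeping of two disjoint vertex-disclosure pointers in the exploration state, but it is the one place where non-routine care is required. A minor further subtlety is the monotone diagonalization for $\rho_n$ in the first step, which is standard.
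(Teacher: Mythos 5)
Your first step (establishing~(\ref{eq4721}) via a proper exploration and diagonalization) is in line with the paper's proof. For the conditional convergence, however, you deviate substantially from the paper and run into a genuine obstruction.

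The paper's route is simpler and avoids the combined rule entirely: it fixes a sequence of \emph{proper} exploration rules $K_n$, observes that for a proper rule and $\rho_n\le\ell_n$ the event $\{\#\cC(o)\ge\rho_n\}$ is \emph{measurable with respect to the $\ell_n$-step $K_n$-exploration itself}, and then applies the elementary total-variation bound
$|\cP_{|A}(B)-\cQ_{|A}(B)|\le \frac{1}{\cP(A)}d_{\mathrm{TV}}(\cP,\cQ)+\bigl|\frac{\cQ(A)}{\cP(A)}-1\bigr|$
to the laws of the explorations (the case $\rho_n>\ell_n$ is reduced to $\rho_n\le\ell_n$ by a separate TV comparison at the end). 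This keeps the full $\ell_n$ budget. Your two-channel construction, by contrast, has a budget defect that you do not address: the hypothesis only delivers an $\ell_n$-step coupling of the $\tilde K_n$-explorations, and because your two sub-channels must be \emph{disjoint} for the $K_n$-channel to coincide in distribution with a stand-alone $K_n$-exploration, each channel necessarily receives strictly fewer than $\ell_n$ steps. After conditioning on $A_n\cap A$ you therefore only control the $K_n$-sub-explorations for some $\ell_n'<\ell_n$ steps, which gives $(K_n,\ell_n')$-local convergence of the conditionals rather than $(K_n,\ell_n)$-local convergence as claimed; since $(\ell_n)$-local convergence does not upgrade to a larger budget, this gap is not cosmetic. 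Worse, the BFS-channel needs on the order of $\rho_n$ steps to decide the event, and the proposition asserts the conclusion for \emph{every} admissible $(\rho_n)$ — including $\rho_n$ comparable to or larger than $\ell_n$ — so your channel allocation is not even well defined without the reduction to slower-growing $\rho_n$ that the paper carries out explicitly. The remaining concern you flag (formalising $\tilde K_n$ as a legitimate rule in the sense of Section~\ref{sec:expl_conv}) is real but secondary; the budget problem is the step that would actually fail.
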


\begin{remark} Let  $(K_n)$ be a sequence of proper exploration rules. Then demanding in Proposition~\ref{prop8723} that $(G_n^{o_n})$ converges $(K_n,\ell_n)$-locally to $G^o$ instead one can deduce as in the proof of the proposition that then $G_n^{o_n}$ under $\P_{|\{\cC_n(o_n)\ge \rho_n\}}$ converges $(K_n,\ell_n)$-locally to the graph $G^o$ under $\P_{|\{G\text{ is infinite}\}}$.
\end{remark}

\begin{proof}Let $K_1,K_2,\dots$ be a sequence of proper exploration rules.
By assumption we can couple the $K_n$-explorations $\bar{\mathfrak E}^{(n)}$ and $\mathfrak E^{(n)}$ of $G_n^{o_n}$ and $G^o$, respectively,  such that $\bar{\mathfrak E}^{(n)}_{\ell_n} \sim {\mathfrak E}^{(n)}_{\ell_n}$, with high probability.
Since the exploration rules are proper we have that if $\ell_n\ge \rho$, on $\{\bar{\mathfrak E}^{(n)}_{\ell_n} \sim {\mathfrak E}^{(n)}_{\ell_n}\}$ one has $\#\cC_n(o_n)\ge \rho$ iff $\#\cC(o)\ge \rho$. Consequently,
$$
\lim_{n\to\infty} \P(\#\cC_n(o_n)\ge \rho) = \P(\# V \ge \rho).
$$
Obviously, as $\rho\to\infty$, we have $ \P(\# V \ge \rho)\to  \P(G\text{ is infinite})$ and existence of an appropriate sequence $(\rho_n)$ follows by a diagonalisation argument. For every sequence  $(\rho_n)$ tending to infinity  one has 
$$
\limsup_{n\to\infty} \P(\#\cC_n(o_n)\ge \rho_n) \le \limsup_{n\to\infty} \P(\#\cC_n(o_n)\ge \rho)= \P(\# V \ge \rho) \to  \P(G\text{ is infinite})
$$
as $\rho\to\infty$ so that~(\ref{eq4721}) entails that indeed
$$
\lim_{n\to\infty} \P(\#\cC_n(o_n)\ge \rho_n) = \P(G\text{ is infinite}).
$$
In order to show the second statement we first  suppose that $\rho_n\le \ell_n$ for all large~$n$. Then one can read off the $K_n$-exploration $\mathfrak E^{(n)}_{\ell_n}$ with $\ell_n$ steps of $G^o$ whether $\#\cC(o)\ge \rho_n$ and the same is true for the respective exploration of the graph $G_n^{o_n}$, say $\bar{\mathfrak E}_{\ell_n}^{(n)}$. Conceiving the explorations $\mathfrak E^{(n)}_{\ell_n}$ and  $\bar{\mathfrak E}_{\ell_n}^{(n)}$  as equivalence classes (modulo graph isomorphisms) we get that
$$
d_\mathrm{TV}\bigl(\P\circ ( \mathfrak E^{(n)}_{\ell_n})^{-1}, \P\circ ( \bar{\mathfrak E}^{(n)}_{\ell_n})^{-1}\bigr)\to 0.
$$
Now generally for two distributions $\cP$ and $\cQ$ and an event $A$ one has for the conditional distributions $\cP_{|A}$ and $\cQ_{|A}$ that for every event $B$
\begin{align*}
\cP_{|A}(B)-\cQ_{|A}(B)&=\frac 1{\cP(A)} \cP(A\cap B)-\frac 1{\cQ(A)} \cQ(A\cap B)\\
&=\frac 1{\cP(A)} (\cP(A\cap B)- \cQ(A\cap B))+ \bigl(\frac 1{\cP(A)}- \frac 1{\cQ(A)}\bigr) \cQ(A\cap B)\\
&\le  \frac 1{\cP(A)} d_\mathrm{TV}(\cP,\cQ)+ \Bigl(\frac {\cQ(A)}{\cP(A)}- 1\Bigr)_+
\end{align*}
With an analogous estimate in the converse direction one thus gets that
$$
|\cP_{|A}(B)-\cQ_{|A}(B)|\le \frac 1{\cP(A)} d_\mathrm{TV}(\cP,\cQ)+\Bigl|\frac {\cQ(A)}{\cP(A)}- 1\Bigr|.
$$
Using this estimate with $A$ being the event that the exploration yields a cluster with more or equal to $\rho_n$ elements yields the estimate
\begin{align*}
d_\mathrm{TV}&\bigl( \P_{|\{\#\cC(o)\ge \rho_n\}}\circ ( \mathfrak E_{\ell_n}^{(n)})^{-1},\P_{|\{\#\cC_n(o_n)\ge \rho_n\}}\circ ( \bar{\mathfrak E}_{\ell_n}^{(n)})^{-1}\bigr)\\
&\le  \frac 1{\P(\#\cC(o)\ge \rho_n)}\, d_\mathrm{TV}(\P\circ ( \mathfrak E_{\ell_n}^{(n)})^{-1},\P\circ (\bar{ \mathfrak E}^{(n)}_{\ell_n})^{-1})+\Bigl|\frac {\P(\#\cC_n(o_n)\ge \rho_n)}{\P(\#\cC(o)\ge \rho_n)}- 1\Bigr|.
\end{align*}
The latter tends to zero by choice of $\rho_n$. Clearly, 
\begin{align}\begin{split}\label{eq3813}
d_\mathrm{TV}&\bigl( \P_{|\{\#\cC(o)\ge \rho_n\}}\circ ( \mathfrak E_{\ell_n}^{(n)})^{-1},\P_{|\{\#\cC(o)=\infty\}}\circ ( { \mathfrak E}_{\ell_n}^{(n)})^{-1}\bigr)\\
&\le d_\mathrm{TV}\bigl( \P_{|\{\#\cC(o)\ge \rho_n\}}\circ (G^o)^{-1},\P_{|\{\#\cC(o)=\infty\}}\circ ( G^o)^{-1}\bigr)\to 0,
\end{split}\end{align}
since $\P(\cC(o)=\infty)>0$.
It remains to consider $n$ with $\rho_n> \ell_n$. First note that applying an arbitrary proper exploration rule for $\ell_n$ steps yields that
\begin{align*}
|\P(\#\cC_n(o_n)\ge \ell_n)- \P(\#\cC(o)=\infty)|&\le  |\P(\#\cC_n(o_n)\ge \ell_n)- \P(\#\cC(o)\ge \ell_n)|\\
&\quad +|\P(\#\cC(o)\ge \ell_n)- \P(\#\cC(o)=\infty)|\to 0.
\end{align*}
Consequently, we get along a subsequence of $n$'s with $\rho_n>\ell_n$ that as in (\ref{eq3813})
\begin{align*}
d_\mathrm{TV}&\bigl( \P_{|\{\#\cC_n(o_n)\ge \rho_n\}}\circ (\bar{ \mathfrak E}_{\ell_n}^{(n)})^{-1},\P_{|\{\#\cC_n(o_n)\ge \ell_n\}}\circ (\bar { \mathfrak E}_{\ell_n}^{(n)})^{-1}\bigr)\\
&\le \frac {\P(\#\cC_n(o_n)\in [\ell_n,\rho_n))}{\P(\#\cC_n(o_n)\ge \ell_n)} =\frac {\P(\#\cC_n(o_n)\ge \ell_n)-\P(\#\cC_n(o_n)\ge\rho_n)}{\P(\#\cC_n(o_n)\ge \ell_n)}.
\end{align*}
The latter tends to zero since by assumption $\P(\#\cC_n(o_n)\ge\rho_n)$ tends also  to $\P(\#\cC(o)=\infty)$.
\end{proof}

\begin{corollary}\label{cor:823757}
Let $(G_n)_{n\in\N}$ be a sequence of random $\IG$-graphs so that every graph $G_n$ has $n$ vertices. Given $G_n$ we choose a root $o_n$ uniformly from the respective set of vertices and we suppose that $(G_n^{o_n})$ converges $(\ell_n)$-locally to a random, connected $\IG^\mathrm{root}$-graph $G^o$.
We let $\cC_n^{\mathrm{max}}$ denote the largest component\footnote{If there is not a unique largest component we choose one uniformly at random.} of $G^o$ and suppose that
$$
\lim_{n\to\infty} \frac{\#\cC_n^\mathrm{max}}{n} =\P(G\text{ is infinite}),\text{ \ in probability}.
$$
We denote by $o_n^\mathrm{max}$ a vertex of  $G_n$ that is, given $G_n$, uniformly distributed on $\cC_n^\mathrm{max}$. Then 
$(G_n^{o_n^\mathrm{max}})_{n\in\N}$ converges $(\ell_n)$-locally to the distribution of $G^o$ under $\P_{|\{G\text{ is infinite}\}}$. 
\end{corollary}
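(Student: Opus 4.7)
The plan is to identify the law of $G_n^{o_n^{\mathrm{max}}}$ with a conditional law of $G_n^{o_n}$ and then appeal to Proposition~\ref{prop8723}. Given $G_n$ and the (possibly random) choice of $\cC_n^{\mathrm{max}}$, a uniformly chosen vertex of $\cC_n^{\mathrm{max}}$ has the same distribution as $o_n$ conditioned on the event $A_n=\{o_n\in\cC_n^{\mathrm{max}}\}$; hence the law of $G_n^{o_n^{\mathrm{max}}}$ coincides with that of $G_n^{o_n}$ under $\P_{|A_n}$, and it suffices to establish $(\ell_n)$-local convergence of the latter.

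First I would invoke Proposition~\ref{prop8723} to obtain a monotone $\N$-valued sequence $\rho_n\to\infty$ satisfying (\ref{eq4721}) such that $G_n^{o_n}$ under $\P_{|B_n}$, with $B_n=\{\#\cC_n(o_n)\ge\rho_n\}$, converges $(\ell_n)$-locally to $G^o$ under $\P_{|\{G\text{ is infinite}\}}$. Since slowing $\rho_n$ down only reinforces (\ref{eq4721}) and the second conclusion of the proposition applies to every such sequence, I may additionally assume $\rho_n=o(n)$. Setting $E_n=\{\#\cC_n^{\mathrm{max}}\ge\rho_n\}$, the hypothesis $\#\cC_n^{\mathrm{max}}/n\to\P(G\text{ is infinite})>0$ in probability together with $\rho_n=o(n)$ gives $\P(E_n)\to 1$, and on $E_n$ one has $A_n\subset B_n$.

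The next step is to show $\P(A_n\triangle B_n)\to 0$ with $\liminf_n\P(A_n)>0$ and $\liminf_n\P(B_n)>0$. By bounded convergence, $\P(A_n)=\E[\#\cC_n^{\mathrm{max}}/n]\to\P(G\text{ is infinite})$, and the same argument as in the proof of Proposition~\ref{prop8723} shows that $\P(B_n)\to\P(G\text{ is infinite})$ as well. Combining $\P(A_n\setminus B_n)\le\P(E_n^c)\to 0$ with the identity $\P(B_n\setminus A_n)=\P(B_n)-\P(A_n)+\P(A_n\setminus B_n)$ gives $\P(A_n\triangle B_n)\to 0$, and an elementary estimate analogous to the one at the end of the proof of Proposition~\ref{prop8723} yields
\[
d_{\mathrm{TV}}\bigl(\P_{|A_n},\P_{|B_n}\bigr)\le \frac{2\,\P(A_n\triangle B_n)}{\P(A_n)\wedge\P(B_n)}\conv 0.
\]

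Given any sequence $(K_n)$ of exploration rules, Proposition~\ref{prop8723} supplies a coupling of the $K_n$-explorations of $G_n^{o_n}$ under $\P_{|B_n}$ and of $G^o$ under $\P_{|\{G\text{ is infinite}\}}$ that coincide at step $\ell_n$ with high probability. Composing this with a maximal coupling of the laws of the $\ell_n$-step $K_n$-exploration of $G_n^{o_n}$ under $\P_{|A_n}$ and $\P_{|B_n}$, whose failure probability is controlled by the above total variation bound, produces the desired coupling witnessing $(\ell_n)$-local convergence of $G_n^{o_n^{\mathrm{max}}}$ to $G^o$ under $\P_{|\{G\text{ is infinite}\}}$. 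The only delicate point is arranging $\rho_n=o(n)$ so that the event $A_n\triangle B_n$ is negligible; this encodes the fact that roots in a non-maximal component of size at least $\rho_n$ must contribute vanishing mass, which here follows from the hypothesis that the relative size of $\cC_n^{\mathrm{max}}$ converges to $\P(G\text{ is infinite})$.
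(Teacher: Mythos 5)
The first step of your argument contains a genuine error. You assert that the law of $G_n^{o_n^{\mathrm{max}}}$ \emph{coincides} with that of $G_n^{o_n}$ under $\P_{|A_n}$, where $A_n=\{o_n\in\cC_n^{\mathrm{max}}\}$. This is false in general. It is true that, \emph{conditionally on $G_n$}, the vertex $o_n$ conditioned on $A_n$ is uniform on $\cC_n^{\mathrm{max}}$, i.e.\ has the same conditional law as $o_n^{\mathrm{max}}$. But conditioning on $A_n$ also reweights the marginal law of $G_n$ itself: since $\P(A_n\mid G_n)=\#\cC_n^{\mathrm{max}}/n$, the conditional law of $G_n$ given $A_n$ is the unconditional law tilted by the random factor $\#\cC_n^{\mathrm{max}}/n$. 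Concretely,
\[
\P_{|A_n}(G_n^{o_n}\in\,\cdot\,)=\frac{1}{\P(A_n)}\,\E\Bigl[\frac{\#\cC_n^{\mathrm{max}}}{n}\,\1_{\{G_n^{o_n^{\mathrm{max}}}\in\,\cdot\,\}}\Bigr],
\]
which differs from $\P(G_n^{o_n^{\mathrm{max}}}\in\,\cdot\,)=\E\bigl[\1_{\{G_n^{o_n^{\mathrm{max}}}\in\,\cdot\,\}}\bigr]$ by the non-constant weight $\frac{\#\cC_n^{\mathrm{max}}/n}{\P(A_n)}$. The two laws agree only if $\#\cC_n^{\mathrm{max}}$ is deterministic given its size order, which you have no reason to assume.

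What the paper proves in its first step is not equality but an asymptotic total variation bound: using exactly the hypothesis $\#\cC_n^{\mathrm{max}}/n\to\P(G\text{ is infinite})$ in probability (together with $\P(A_n)\to\P(G\text{ is infinite})$), one gets
\[
d_{\mathrm{TV}}\bigl(\P(G_n^{o_n^{\mathrm{max}}}\in\,\cdot\,),\,\P_{|A_n}(G_n^{o_n}\in\,\cdot\,)\bigr)\le \E\Bigl[\Bigl|\frac{\#\cC_n^{\mathrm{max}}/n}{\P(A_n)}-1\Bigr|\Bigr]\conv 0
\]
by bounded convergence. In your writeup the hypothesis on $\#\cC_n^{\mathrm{max}}/n$ is invoked only later (to get $\P(E_n)\to1$), precisely because the first step was treated as an identity rather than an approximation; you are missing a use of this hypothesis where it is essential. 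Once you replace the claimed identity by the total variation estimate above, the remainder of your argument (choose $\rho_n=o(n)$ via Proposition~\ref{prop8723}, show $\P(A_n\triangle B_n)\to0$, bound $d_{\mathrm{TV}}(\P_{|A_n},\P_{|B_n})$, and compose couplings) is sound and in fact parallels the paper's proof, which also works by chaining total variation comparisons of the three laws $\P\circ(G_n^{o_n^{\mathrm{max}}})^{-1}$, $\P_{|A_n}\circ(G_n^{o_n})^{-1}$, $\P_{|B_n}\circ(G_n^{o_n})^{-1}$. One small cosmetic difference: the paper fixes $\rho_n=\frac p2 n$ and compares $A_n$ and $B_n$ directly via $\P(A_n\cap B_n)$, $\P(A_n)$, $\P(B_n)\to p$, rather than arranging $\rho_n=o(n)$ to force $A_n\subset B_n$ whp; both devices work.
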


\begin{proof} We set $p= \P(G\text{ is infinite})>0$.
First we show that 
\begin{align}\label{eq8462}
d_\mathrm{TV}(G_n^{o_n^\mathrm{max}}, \P_{|\{o_n\in \cC^\mathrm{max}_n\}}\circ(G_n^{o_n})^{-1})\to 0.
\end{align}
Note that for an event $A$
\begin{align*}
\P(o_n\in \cC_n^\mathrm{max}, G_n^{o_n}\in A)&=\int  \int   \1_{\cC_n^\mathrm{max}}(\bar o_n) \,\1_A (G_n^{\bar o_n})\,  d\mathrm{Unif}_{V_n}(\bar o_n)\,      d\P\\
&=\int  \frac{\#\cC_n^\mathrm{max}} n \int   \1_A (G_n^{\bar o_n})\,  d\mathrm{Unif}_{\cC_n^\mathrm{max}}(\bar o_n)\,      d\P\\
&=\int  \frac{\#\cC_n^\mathrm{max}} n  \1_A (G_n^{o_n^\mathrm{max}})\,      d\P.
\end{align*}
Consequently,
$$
\bigl| \P_{|\{o_n\in \cC^\mathrm{max}_n\}}(G_n^{o_n}\in A)-
\P(G_n^{o_n^\mathrm{max}}\in A)\bigr| \le \int \Bigl| \frac 1{\P(o_n\in\cC^\mathrm{max}_n)}\frac{\#\cC_n^\mathrm{max}} n-1\Bigr|\, d\P
$$
and the right hand side does not depend on the choice of the event $A$. Moreover, it converges  to $0$ as $n\to\infty$ as consequence of dominated convergence, $\frac{\#\cC_n^\mathrm{max}} n\to p$, in probability, and $\P(o_n\in\cC^\mathrm{max}_n)\to p$.

Next, we apply Proposition~\ref{prop8723} with $\rho_n=\frac p2n$: one has
$$
\P(\#\cC_n(o_n)\ge \rho_n)\ge \int \1_{\{o_n\in\cC_n^\mathrm{max}\}}  \1_{\{\#\cC_n^\mathrm{max}\ge \rho_n\}} d\P=\int \frac{\cC_n^\mathrm{max}}{n} \1_{\{\#\cC_n^\mathrm{max}\ge \rho_n\}}\,d\P
$$
and the right hand side converges to $p$ as consequence of dominated convergence. Hence, $G_n^{o_n}$ under $\P_{|\{\cC_n(o_n)\ge \rho_n\}}$ converges $(\ell_n)$-locally to the graph $G^o$ under $\P_{|\{G\text{ is finite}\}}$. Once we showed that 
$$
d_\mathrm{TV} \bigl(\P_{|\{o_n\in \cC_n^\mathrm{max}\}}\circ (G_n^{o_n})^{-1},\P_{|\{\#\cC_n(o_n)\ge \rho_n\}}\circ (G_n^{o_n})^{-1}\bigr)\to 0
$$
we can deduce the statement of the proposition with (\ref{eq8462}) and the previously derived local convergence.
One has
\begin{align*}
d_\mathrm{TV} &\bigl(\P_{|\{o_n\in \cC_n^\mathrm{max}\}}\circ (G_n^{o_n})^{-1},\P_{|\{\#\cC_n(o_n)\ge \rho_n\}}\circ (G_n^{o_n})^{-1}\bigr)\\
& = 1- \frac{\P(o_n\in \cC_n^\mathrm{max},\#\cC_n(o_n)\ge \rho_n)}{\P(o_n\in \cC_n^\mathrm{max})\vee \P(\#\cC_n(o_n)\ge \rho_n)}
\end{align*}
which converges to $0$ since $\P(o_n\in \cC_n^\mathrm{max},\#\cC_n(o_n)\ge \rho_n)$, $\P(o_n\in \cC_n^\mathrm{max})$ and $ \P(\#\cC_n(o_n)\ge \rho_n)$ all tend to $p$.
\end{proof}


%
%
\begin{proposition}\label{prop:631}Let $(m_n)$, $(\ell_n)$ be $\N$-valued sequences and $(\tau_n)$ a $(0,\infty)$-valued sequence. Moreover, let $G_1^{o_1},\dots$ random $\IG^\mathrm{root}$-valued random variables converging $(m_n \ell_n)$-locally to a andom rooted graph $G^o$.
Moreover, suppose that 
$$
m_n \, P^o_G\bigl(\#\mathrm{range}\bigl((X_t)_{t\in[0,2\tau_n]}\bigr)>\ell_n)\to 0,\text{ in probability}.
$$
For $n\in\N$, let $V^{(n)}_0$ be the set of vertices seen by  independent (conditionally on $G^o$) $P^o_G$-processes $X^{(n,1)}=(X^{(1)}_t)_{t\in[0,2\tau_n]},\dots,X^{(n,m_n)}=(X^{(m_n)}_t)_ {t\in[0,2\tau_n]}$ and let $G|_{V^{(n)}_0}$ denote the $V_0^{(n)}$-disclosed subgraph  of $G$. Analogously, let  $\bar V^{(n)}_0$ be the set of vertices seen by the independent $P^{o_n}_{G_n}$-processes $(\bar X^{(n,1)}_t)_{t\in[0,2\tau_n]},\dots,(\bar X^{(n,m_n)}_t)_ {t\in[0,2\tau_n]}$ and let $G_n|_{\bar V^{(n)}_0}$ denote the $V_0^{(n)}$-disclosed subgraph  of $G$.
One can couple 
$$
(G^{o_n}_n|_{\bar V^{(n)}_0}, (\bar X^{(n,1)},\dots, \bar X^{(n,m_n)})) \text{ and } (G^o|_{ V^{(n)}_0},( X^{(n,1)},\dots,  X^{(n,m_n)}))
$$
so that for the coupled random objects, with high probbaility, there is a graph isomorphism $\varphi_n$ taking $G^{o_n}_n|_{\bar V^{(n)}_0}$ to $G^{o}|_{ V^{(n)}_0}$ which also takes the trajectories $\bar X^{(n,1)},\dots, \bar X^{(n,m_n)}$ to $X^{(n,1)},\dots,  X^{(n,m_n)}$.
\end{proposition}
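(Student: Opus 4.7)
The plan is to apply the assumed $(m_n\ell_n)$-local convergence with the exploration rule $K_n$ taken to be the $2\tau_n$-exploration along the Markov process of Example~\ref{exa_MP}, driven on $G^o$ by the processes $X^{(n,1)},X^{(n,2)},\dots$ and on $G_n^{o_n}$ by $\bar X^{(n,1)},\bar X^{(n,2)},\dots$. Since $(G_n^{o_n})$ is assumed to converge $(m_n\ell_n)$-locally to $G^o$ uniformly in the rule, this gives a coupling of the two $K_n$-explorations $\bar{\mathfrak E}^{(n)}$ and $\mathfrak E^{(n)}$ with $\bar{\mathfrak E}^{(n)}_{m_n\ell_n}\sim \mathfrak E^{(n)}_{m_n\ell_n}$ \whp. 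Because the randomness underlying $K_n$ is exactly the sequence of Markov trajectories used to drive it, this coupling is automatically a joint coupling of the random graphs together with their $m_n$ independent Markov processes (plus the auxiliary processes $X^{(n,m_n+1)},\dots$, which we shall discard).

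Next I would control the range event. Let $A_n$ be the event that every one of the first $m_n$ trajectories on $G^o$ visits at most $\ell_n$ distinct vertices within $[0,2\tau_n]$. Conditioning on $G^o$ and taking a union bound,
$$
\P(A_n^c\mid G^o)\ \le\ 1\wedge m_n\, P^o_G\bigl(\#\mathrm{range}\bigl((X_t)_{t\in[0,2\tau_n]}\bigr)>\ell_n\bigr),
$$
and the right-hand side is bounded by $1$ and tends to $0$ in probability by hypothesis, so bounded convergence yields $\P(A_n^c)\to 0$. On the intersection of the coupling event with $A_n$, the first $m_n$ processes $X^{(n,j)}$ contribute together at most $m_n\ell_n$ fresh vertex-revelations to the exploration. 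Hence there exists a (random) step $k^*\le m_n\ell_n$ at which $V(k^*)=V^{(n)}_0$ and the disclosed subgraph $\mathcal G^o_{k^*}$ is precisely $G^o|_{V^{(n)}_0}$, and analogously $\bar V(k^*)=\bar V^{(n)}_0$ and $\bar{\mathcal G}^{o_n}_{k^*}=G_n^{o_n}|_{\bar V^{(n)}_0}$ on the other side. The isomorphism $\varphi_n$ providing $\bar{\mathfrak E}^{(n)}_{m_n\ell_n}\sim \mathfrak E^{(n)}_{m_n\ell_n}$ restricts to an isomorphism at step $k^*$, so it identifies $G_n^{o_n}|_{\bar V^{(n)}_0}$ with $G^o|_{V^{(n)}_0}$, and because it is inherited from the coupling of the driving Markov processes it also sends $\bar X^{(n,j)}$ to $X^{(n,j)}$ for every $j=1,\dots,m_n$.

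The hard part is essentially conceptual rather than computational: one must accept that the $\tau$-exploration along the Markov process in Example~\ref{exa_MP} is rich enough that an isomorphism of explorations encodes not only the graph structure and the sequence of newly revealed edges, but the full continuous-time trajectories of the driving processes, including the holding times and the specific multi-edges traversed by each jump. Once this is built into the definition of the exploration rule, the proposition reduces to the union-bound range estimate above combined with a direct application of the $(m_n\ell_n)$-local convergence hypothesis to the rule $K_n$.
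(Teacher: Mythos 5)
The union-bound control of the range event is fine, and the idea of running the $2\tau_n$-exploration along the Markov process is exactly what the paper does. The gap is the sentence ``Because the randomness underlying $K_n$ is exactly the sequence of Markov trajectories used to drive it, this coupling is automatically a joint coupling of the random graphs together with their $m_n$ independent Markov processes,'' which you later rephrase as the exploration being ``rich enough that an isomorphism of explorations encodes \dots\ the full continuous-time trajectories.'' This is not true under the paper's definition: an exploration $\mathfrak E_k=(\mathcal G^o_k,(e_1,\dots,e_k))$ records only the disclosed generalised graph and the ordered list of traversed exploration edges. It does \emph{not} determine the holding times, nor the excursions of a driving trajectory inside the already--disclosed region between consecutive boundary crossings, nor (in the presence of multi-edges or loops) the non-exploration jumps. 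Thus $\bar{\mathfrak E}^{(n)}_{m_n\ell_n}\sim \mathfrak E^{(n)}_{m_n\ell_n}$ on its own is strictly weaker than the conclusion you want, and the claim that a coupling of images of a map automatically lifts to a coupling of preimages is false in general. Your suggested fix, ``once this is built into the definition of the exploration rule,'' does not help either: the local-convergence hypothesis is stated for exploration rules in the paper's sense (probability kernels on \emph{edges} of the current generalised graph), so enriching the rule to output trajectories would require re-proving that $(m_n\ell_n)$-local convergence holds for the modified rule, which is not automatic.

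The missing step, which the paper supplies, is an explicit extension of the coupling from exploration data to trajectories. Given the current exploration $\mathfrak E_{k-1}$, the conditional law of the next trajectory piece (the current driving process run until it exits the disclosed region) together with the next exploration edge $e_k$ depends only on $\mathfrak E_{k-1}$ and is invariant under graph isomorphisms of $\mathfrak E_{k-1}$. Hence, on the event that the explorations agree up to isomorphism, one can generate the trajectory pieces on both sides from the same uniform randomness (with the isomorphism transporting states), and this inductive construction yields a joint coupling of $(G^{o_n}_n|_{\bar V^{(n)}_0},(\bar X^{(n,j)})_j)$ and $(G^o|_{V^{(n)}_0},(X^{(n,j)})_j)$ matched by $\varphi_n$, \whp. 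Without this isomorphism-invariance-of-the-conditional-law argument your proof is not complete.
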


\begin{proof}
For $n\in\N$ we consider  exploration  along independent Markov processes of length $\tau_n$ as introduced in Example~\ref{exa_MP}. Since 
\begin{align*}
P_G^o&(\#\mathrm{range}(X^{(n,1)},\dots , X^{(n,m_n)})>m_n \ell_n)\\
& \le m_n\, P_G^o\bigl(\#\mathrm{range}((X_t)_{t\in[0,2\tau_n]})>\ell_n\bigr) \to 0,\text{ in probability},
\end{align*}
the coupling succeeds with high probability meaning that the disclosed areas and the exploration edges   of the exploration of the first $m_n$ paths can be transformed into each other by an appropriate graph isomorphism, with high probability.
The $k$-th exploration step can be achieved by following the ``current'' trajectory $X^{(n,q)}$ until the next vertex outside of the currently disclosed area is entered. Provided that the coupling succeeded so far the conditional distribution of the trajectory given the next exploration step only depends on the previous exploration step and is invariant under graph isomorphisms. Consequently, the coupling can be extended to a coupling as in the statement of the proposition.  
\end{proof}

\newpage

\section{Markov process estimates}
 
In this section we derive estimates for entrance times that are in the spirit  of Aldous and Brown \cite{Aldous1992}.
We denote by $G\in \b G$  a fixed \emph{connected, deterministic, weighted} graph and we denote by $(P^v:v\in V)$ the reversible continuous time Markov family with generator $L=L_G$ being defined as in~(\ref{def:gen}). Proovided that $\sum_{w\in V} \alpha(w)$ is finite, the distribution
$\pi=\pi^G=(\pi^G_v)_{v\in V}$ given by
$$
\pi_v^G=\frac {\alpha(v)}{\sum_{w\in V} \alpha(w)}\qquad (v\in V)
$$
is the unique invariant distribution.
 We need to introduce some further quantities.
For a  subset $B\subset V$  and $\tau\geq 0$ we call the measure  $
\enter^\tau_{B,G}$ given by
$$
\enter_{B,G}^\tau(y)=\1_B(y) \alpha_y\sum_{z\in B^c} L(y,z)\,  P_G^z(T_B>\tau)\qquad (y\in V)
$$
 the \emph{$\tau$-equilibrium measure of $B$} and let 
$$\capa _{G}^\tau(B) =\|\enter_{B,G}^\tau\|=\sum_{y\in B} \alpha_y\sum_{z\in B^c} L(y,z)\,  P_G^z(T_B>\tau)
$$
the \emph{$\tau$-capacity of $B$}. 
In the particular case that $\tau=0$, we have
$$
\capa_G^0(B)= \sum_{y\in B} \alpha_y\sum_{z\in B^c} L(y,z).
$$



\begin{lemma} \label{le:tec_est} Let $G\in\IG$ be a finite graph, $B\subset V$ and  $y\in V$. One has the following:
\begin{enumerate}
\item  The distribution $P^{\pi}(T_B\in \cdot, X_{T_B}=y)$ has on $(0,\infty)$ the continuous  Lebesgue density
$$t\mapsto \pi_y \sum_{z\in B^c} L(y,z) \,P^z(T_B>t).$$
\item For every $t\ge 2\tau>0$, we have
$$
|P^y(T_B>t)- P^y(T_B>\tau) \, P^\pi(T_B> t)|\leq 2 d_G(\tau)+\sfrac {2\tau}{\alpha(V)} \capa_G^0(B) .
$$
\end{enumerate}
\end{lemma}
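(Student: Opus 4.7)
For part (a), I compute the joint density of $(T_B,X_{T_B})$ under $P^\pi$ by a time-reversal argument. For $t>0$ and $y\in B$, the event $\{T_B\in\diff t,\,X_{T_B}=y\}$ under $P^\pi$ means that the chain remains in $B^c$ throughout $[0,t)$, sits at some $z\in B^c$ at time $t^-$, and then jumps to $y$ at rate $L(z,y)\diff t$. Reversibility of the $\pi$-stationary chain on the survival event yields
\[P^\pi\bigl(X_s\in B^c,\ s\in[0,t],\ X_t=z\bigr)=\pi_z\,P^z(T_B>t),\]
so summing the infinitesimal jump rates over $z\in B^c$ and applying detailed balance $\pi_z L(z,y)=\pi_y L(y,z)$ produces the claimed formula. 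Continuity in $t$ follows from continuity of $t\mapsto P^z(T_B>t)$ on $(0,\infty)$ for $z\in B^c$.

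For part (b), the idea is to apply the Markov property at $\tau$ and split the error into a mixing contribution (handled by $d_G(\tau)$) and a capacity contribution (handled by the density bound from (a)). Writing $q_\tau(y,z):=P^y(X_\tau=z,T_B>\tau)$ and $f_y(s):=P^y(T_B>s)$, the Markov property gives $f_y(t)=\sum_z q_\tau(y,z)f_z(t-\tau)$ and hence
\begin{align*}
f_y(t)-f_y(\tau)\,P^\pi(T_B>t)
&=\sum_z\bigl[q_\tau(y,z)-f_y(\tau)\pi_z\bigr]f_z(t-\tau)\\
&\quad+f_y(\tau)\bigl[P^\pi(T_B>t-\tau)-P^\pi(T_B>t)\bigr].
\end{align*}
The second summand is bounded by $\tau\,\capa_G^0(B)/\alpha(V)$: by (a), the Lebesgue density of $T_B$ under $P^\pi$ at $s>0$ equals $\sum_{y\in B}\pi_y\sum_zL(y,z)P^z(T_B>s)\le\sum_{y\in B}\pi_y L_y=\capa_G^0(B)/\alpha(V)$, so integrating over $[t-\tau,t]$ yields the bound and the factor $f_y(\tau)\le 1$ drops out.

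For the first summand, the vanishing mean $\sum_z[q_\tau(y,z)-f_y(\tau)\pi_z]=0$ together with $f_z(t-\tau)\in[0,1]$ gives an upper bound of $\tfrac12\sum_z|q_\tau(y,z)-f_y(\tau)\pi_z|$. Decomposing
\[q_\tau(y,z)-f_y(\tau)\pi_z=\bigl[P^y(X_\tau=z)-\pi_z\bigr]-\bigl[P^y(X_\tau=z,T_B\le\tau)-\pi_zP^y(T_B\le\tau)\bigr]\]
splits this $\ell^1$-distance into a pure mixing part (at most $2d_G(\tau)$) and a re-entry correction, which I control using the strong Markov property at $T_B$: after entering $B$ the chain has additional time $\tau-T_B$ to mix, so the correction equals $\int_0^\tau\!\!\sum_{b\in B}P^y(T_B\in\diff s,X_{T_B}=b)[P^b(X_{\tau-s}=z)-\pi_z]\diff s$, and summing $|\cdot|$ over $z$ produces $2\int_0^\tau d_G(\tau-s)\,\diff P^y(T_B)(s)$. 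Combining this with the fact that the $P^\pi$-density of $T_B$ is uniformly bounded by $\capa_G^0(B)/\alpha(V)$ --- which one transfers to a $P^y$-statement via reversibility of the killed chain --- yields a contribution absorbed into $2\tau\capa_G^0(B)/\alpha(V)$, giving the announced bound. The main technical obstacle is precisely the re-entry step: one must combine strong Markov at $T_B$ with the capacity bound from (a) so that the integral is controlled uniformly in $y$, rather than yielding an uncontrolled $P^y(T_B\le\tau)$ term.
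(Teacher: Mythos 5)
Part (a) is essentially the paper's own time-reversal argument, phrased in terms of the joint density rather than $\delta$-increments; it is fine, modulo the small slip that $\sum_{y\in B}\pi_y L_y$ is in general $\ge \capa_G^0(B)/\alpha(V)$, not equal to it (it includes internal $B$-edges); the correct intermediate step is $\sum_{y\in B}\pi_y\sum_{z\in B^c}L(y,z)=\capa_G^0(B)/\alpha(V)$.

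Part (b) takes a genuinely different route — Markov property at the deterministic time $\tau$ and the algebraic identity
\begin{align*}
f_y(t)-f_y(\tau)P^\pi(T_B>t)&=\sum_z\bigl[q_\tau(y,z)-f_y(\tau)\pi_z\bigr]f_z(t-\tau)\\
&\quad+f_y(\tau)\bigl[P^\pi(T_B>t-\tau)-P^\pi(T_B>t)\bigr]
\end{align*}
— and up to the re-entry correction everything is sound and quite clean (the centered-sum $\tfrac12\ell^1$ bound, the decomposition of $q_\tau(y,z)-f_y(\tau)\pi_z$, the strong Markov representation of the re-entry term). However, the crucial step where you control $\int_0^\tau d_G(\tau-s)\,P^y(T_B\in\diff s)$ does not work as stated. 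You invoke that the $P^\pi$-density bound for $T_B$ "transfers to a $P^y$-statement via reversibility of the killed chain," but this transfer is false: the density of $T_B$ under $P^y$ is in general not bounded by $\capa_G^0(B)/\alpha(V)$. A two-vertex example $V=\{1,2\}$, $B=\{2\}$, all weights $1$, already shows this: $\capa_G^0(B)/\alpha(V)=\tfrac12$, yet the $P^1$-density of $T_B$ near $0$ is $1$. Reversibility of the killed chain relates $P^y(X_s=z,\,T_B>s)$ to $P^z(X_s=y,\,T_B>s)$ via the ratio $\alpha_y/\alpha_z$, which gives no uniform-in-$y$ density bound. The last sentence of your proposal already names this re-entry step as "the main technical obstacle," but the obstacle is not resolved, and I do not see how it can be within this decomposition: the integral $\int_0^\tau d_G(\tau-s)\,P^y(T_B\in\diff s)$ has its bad contribution from $s$ near $\tau$ (where $d_G(\tau-s)\approx 1$), and splitting at $\tau/2$ produces $d_G(\tau/2)$-terms (worse than $d_G(\tau)$) and does not land on the claimed constant.

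The paper sidesteps this entirely by a different device: instead of comparing $P^y(X_\tau\in\cdot,\,T_B>\tau)$ with $P^y(T_B>\tau)\pi$ in total variation, it \emph{sacrifices the window} $[\tau,2\tau]$. For the upper bound it enlarges $\{T_B>t\}$ to the event that the path avoids $B$ on $[0,\tau]\cup[2\tau,t]$, so that a full mixing window of length $\tau$ (from time $\tau$ to $2\tau$) sits between the conditioning and the eventual use of $\pi$; for the lower bound it subtracts $P^y\bigl(X_\tau\notin B,\,(X_s)_{s\in[\tau,2\tau]}\text{ hits }B\bigr)$, which is the only place where a capacity bound is needed and where it is applied at stationarity (after $\tau$ units of mixing). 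Consequently no integral of $d_G(\cdot)$ against a hitting-time distribution ever appears, and the argument closes with exactly $2d_G(\tau)+\tfrac{2\tau}{\alpha(V)}\capa_G^0(B)$. If you want to retain the Markov-at-$\tau$ structure, you would need to build in a similar sacrificed window before invoking mixing; as written the re-entry step does not go through.
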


\begin{proof}
  (a)
We note that for $y\in B$ as $\delta\downarrow 0$
\begin{align*}
P^\pi(T_B\in (t,t+\delta], X_{T_B}=y)&=P^\pi(T_B\in (t,t+\delta], X_{t+\delta}=y)+o(\delta)\\
&=\pi_y \, P^y((X_s)_{s\in[\delta,t+\delta]}\text{ does not hit $B$}) +o(\delta)\\
&=\delta \pi_y \sum_{z\in B^c} L(y,z)\, P^z(T_B>t) +o(\delta)
\end{align*}
where the $o(\delta)$ term can be controlled uniformly for all $t\ge0$ since   $\max_{x\in V} L_x<\infty$ by finiteness of $V$.
 Hence,
 $$
\lim_{\delta\downarrow 0}  \sup_{t\ge0} \Bigl|\frac 1{\delta} P^\pi(T_B\in (t,t+\delta], X_{T_B}=y)-\pi_y \sum_{z\in B^c} L(y,z)\, P^z(T_B>t)\Bigr|=0.
$$
The term on the right of the minus sign is continuous in $t$ and thus one gets that
the measure $P^\pi(T_B\in \cdot, X_{T_B}=y)$ is continuously differentiable on $(0,\infty)$ with the respective differential. 

(b) 
We note that under $P^\pi$ the expected number of transitions along edges from $B^c$ to $B$ on an interval $I$ is equal to $\frac 1{\alpha(V)} \capa_G^0(B)$ times the length of the interval. Hence,
\begin{align*}
P^\pi(T_B\in (t-2\tau,t])\le P^\pi\bigl(\{(X_t)_{t\in[t-2\tau,t]}&\text{ has a transition along  }\\
&\text{ an edge from $B^c$ to $B$}\}\bigr) \le \sfrac {2\tau}{\alpha(V)} \capa_G^0(B). 
\end{align*}
Now using the Markov property we get with $$\mu_{z,\tau}=P^z(X_\tau\in\,\cdot\,|T_B>\tau)$$
that
\begin{align*}  P^z(T_B>t)&=P^z(T_B>\tau)\,  P^{\mu_{z,\tau}}(T_B>t-\tau)\\
&\le P^z(T_B>\tau) \,P^{\mu_{z,\tau}}\bigl((X_t)_{t\in[\tau,t-\tau]}\text{ does not hit }B\bigr) \\
&\leq P^z(T_B>\tau) (P^\pi(T_B> t-2\tau)+d(\tau))\\
&\leq P^z(T_B>\tau)\Bigl (P^\pi(T_B> t)+\sfrac {2\tau}{\alpha(V)} \capa_G^0(B)+d(\tau)\Bigr).
\end{align*} 
Conversely, 
$$ P^z(T_B>t)\geq  P^z((X_t)_{t\in[0,\tau]\cup[2\tau,t]} \text{ does not hit }B)-   P^{z}(X_\tau \not\in B,(X_t)_{t\in[\tau,2\tau]}\text{ hits } B)
$$
and with $P^{z}(X_\tau\not\in B,(X_t)_{t\in[\tau,2\tau)}\text{ hits } B)\le d(\tau)+\frac {\tau}{\alpha(V)} \capa_G^0(B)$ and
$$
P^z((X_t)_{t\in[0,\tau]\cup[2\tau,t]} \text{ does not hit }B)\ge P^z(T_B>\tau)(P^\pi(T_B> t-2\tau)-d(\tau))
$$
we get that
\begin{align*}
 P^z(T_B>t)&\geq P^z(T_B>\tau)(P^\pi(T_B> t-2\tau)-d(\tau))-d(\tau)-\sfrac {\tau}{\alpha(V)} \capa_G^0(B)\\
 &\ge P^z(T_B>\tau)\,P^\pi(T_B> t)-2d(\tau)-\sfrac {\tau}{\alpha(V)} \capa_G^0(B).
\end{align*}
%
%
\end{proof}

\black

 We will use the previous lemma to show that under appropriate assumptions the distribution of the first entrance time and its location are close in total variation norm to  a product distribution of the exponential and an appropriate discrete distribution. 

\begin{proposition}\label{prop:Aldous} Let $B\subset V$, $\tau>0$, $\lambda=\frac 1{\alpha(V)} \capa^\tau_G (B)$, $\lambda_0=\frac 1{\alpha(V)} \capa^0_G (B)$ \begin{align}\label{eq9873457}
\kappa=\kappa_{G,B,\tau}=4 \frac{\lambda_0}{\lambda } \bigl( d(\tau)+\tau\, \lambda_0\bigr)
 \end{align}
and suppose that $-\log \kappa\ge 2\tau \lambda$ and $2\lambda_0\le \pi(B^c)$, then 
 one has
 \begin{align*}
d_{\mathrm{TV}}&\Bigl(P^\pi_G\circ(T_B,X_{T_B})^{-1}, \mathrm{Exp}(\lambda)\otimes \sfrac1{\capa _{G}^{\tau}(B)}\enter _{B,G}^{\tau}\Bigr)\leq   \pi(B)+2\tau \lambda_0+ \kappa(1+\log 1/\kappa).
\end{align*}
\end{proposition}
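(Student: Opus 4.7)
By Lemma~\ref{le:tec_est}(a) the measure $P^\pi_G\circ(T_B,X_{T_B})^{-1}$ restricted to $(0,\infty)\times B$ has the Lebesgue density $f(y,t)=\pi_y\sum_{z\in B^c}L(y,z)\,P^z(T_B>t)$, whereas a short computation shows that the product measure $\mathrm{Exp}(\lambda)\otimes\frac1{\capa_G^\tau(B)}\enter_{B,G}^\tau$ has density $g(y,t)=e^{-\lambda t}\,\pi_y\sum_{z\in B^c}L(y,z)\,P^z(T_B>\tau)$. Since the first law has an atom of mass $\pi(B)$ at $\{T_B=0\}$ while the second is atom-free, it is enough to show that $\int_0^\infty\sum_{y\in B}|f(y,t)-g(y,t)|\,dt$ is of the order $\tau\lambda_0+\kappa(1+\log(1/\kappa))$.

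The pointwise error splits as
\[
|f(y,t)-g(y,t)|\leq \pi_y\sum_{z\in B^c} L(y,z)\Bigl(|P^z(T_B>t)-P^z(T_B>\tau)h(t)|+P^z(T_B>\tau)\,|h(t)-e^{-\lambda t}|\Bigr),
\]
where $h(t):=P^\pi(T_B>t)$. For $t\geq 2\tau$ the first summand is uniformly bounded by $2d_G(\tau)+2\tau\lambda_0$ by Lemma~\ref{le:tec_est}(b). The crux is to show $h(t)\approx e^{-\lambda t}$. To that end I would use Lemma~\ref{le:tec_est}(a) to write $-h'(t)=e(t)$ with $e(t)=\sum_{y\in B}\pi_y\sum_{z\in B^c}L(y,z)P^z(T_B>t)$, then insert Lemma~(b) to obtain $|e(t)-\lambda h(t)|\leq \lambda_0(2d_G(\tau)+2\tau\lambda_0)=\tfrac12\lambda\kappa$ for $t\geq 2\tau$. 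Solving the resulting perturbed linear ODE from $2\tau$ yields $|h(t)-h(2\tau)e^{-\lambda(t-2\tau)}|\leq \kappa/2$ for $t\geq 2\tau$; combining with $h(2\tau)\geq \pi(B^c)-2\tau\lambda_0$ and $1\leq e^{2\lambda\tau}\leq 1+O(\kappa)$ (which follows from $2\tau\lambda\leq -\log\kappa$) gives $|C-1|\leq \pi(B)+O(\tau\lambda_0+\kappa)$ with $C:=h(2\tau)e^{2\lambda\tau}$, and hence $|h(t)-e^{-\lambda t}|\leq \kappa/2+|C-1|\,e^{-\lambda t}$ on $[2\tau,\infty)$.

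With these ingredients in hand I would split the $L^1$-integral at the threshold $T^*:=\lambda^{-1}\log(1/\kappa)$, which is $\geq 2\tau$ by the hypothesis $-\log\kappa\geq 2\tau\lambda$. On $[0,2\tau]$, bound $|f-g|\leq f+g$ and note that $\int_0^{2\tau}\sum_y f(y,t)\,dt=P^\pi(T_B\in(0,2\tau])\leq 2\tau\lambda_0$ (by counting $B^c\to B$ crossings) and $\int_0^{2\tau}\sum_y g(y,t)\,dt=1-e^{-2\lambda\tau}\leq 2\tau\lambda_0$. On $[2\tau,T^*]$, apply the pointwise bound; the reversibility identities $\sum_{y\in B}\pi_y L(y,z)=\pi_z L(z,B)$, $\sum_y\pi_y\sum_z L(y,z)=\lambda_0$ and $\sum_y\pi_y\sum_z L(y,z)P^z(T_B>\tau)=\lambda$ collapse the sums, giving a contribution of order $\lambda_0 T^*(2d_G(\tau)+2\tau\lambda_0)+\lambda\int_{2\tau}^{T^*}|h(t)-e^{-\lambda t}|\,dt=O(\kappa\log(1/\kappa)+\pi(B)+\kappa)$. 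On $[T^*,\infty)$, bound $|f-g|\leq f+g$ once more: the tail of $g$ integrates to $e^{-\lambda T^*}=\kappa$, and $\int_{T^*}^\infty\sum_y f(y,t)\,dt=h(T^*)\leq e^{-\lambda T^*}+|h(T^*)-e^{-\lambda T^*}|=O(\kappa+\pi(B))$. Summing the three contributions and halving yields the claimed bound.

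The main obstacle is the Gronwall step: the leading multiplicative constant $C=h(2\tau)e^{2\lambda\tau}$ deviates from $1$ precisely because of the initial atom $\pi(B)$ and the $B^c\to B$-crossings in $[0,2\tau]$, which explains the presence of both $\pi(B)$ and $2\tau\lambda_0$ in the final bound; the hypothesis $2\lambda_0\leq\pi(B^c)$ ensures $h(2\tau)$ stays in a non-degenerate range so that the multiplicative relation $|C-1|=O(\pi(B)+\tau\lambda_0+\kappa)$ is meaningful. The second delicate point is the choice of $T^*=\lambda^{-1}\log(1/\kappa)$: this is exactly the scale balancing the coarse additive error on $[2\tau,T^*]$ (yielding a factor $\log(1/\kappa)$) against the exponential tail on $[T^*,\infty)$ (yielding a factor of order $\kappa$), which is what produces the combined factor $\kappa(1+\log(1/\kappa))$.
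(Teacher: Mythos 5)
Your plan runs along the same lines as the paper's proof: identify the densities via Lemma~\ref{le:tec_est}(a), feed Lemma~\ref{le:tec_est}(b) into the ODE $-\dot h=e$ to get an approximate exponential, and split the integral at the threshold $T^*=\lambda^{-1}\log(1/\kappa)$ so that $\kappa\log(1/\kappa)$ and $\kappa$ balance. But the bookkeeping you propose does not reproduce the stated coefficient $2$ on $\tau\lambda_0$. You bound $\int_0^{2\tau}\sum_y|f-g|\,dt$ by $\int_0^{2\tau}\sum_y(f+g)\,dt\le 4\tau\lambda_0$, and then the Gronwall constant $|C-1|$ (with $C=h(2\tau)e^{2\lambda\tau}$) contributes another $\pi(B)+2\tau\lambda_0+O(\kappa)$ on $[2\tau,T^*]$. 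After halving and adding the $\tfrac12\pi(B)$ atom, this gives $d_{\mathrm{TV}}\le\pi(B)+3\tau\lambda_0+O(\kappa\log(1/\kappa))$ rather than $\pi(B)+2\tau\lambda_0+\kappa(1+\log(1/\kappa))$. The paper avoids this double count by writing $d_{\mathrm{TV}}=1-\int\alpha\wedge\beta$ and only \emph{lower}-bounding the minimum on $[2\tau,T^*]$ via $\alpha(s,y)\ge\rho\,\beta(s,y)-\text{error}$ with $\rho=\bigl((\pi(B^c)-2\tau\lambda_0)e^{2\tau\lambda}\bigr)\wedge 1$; the single quantity $1-\rho e^{-2\tau\lambda}$, which equals $\max(\pi(B)+2\tau\lambda_0,\,1-e^{-2\tau\lambda})\le\pi(B)+2\tau\lambda_0$, then accounts simultaneously for the atom, the $[0,2\tau]$ loss, and the $[T^*,\infty)$ tail. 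That max-vs.-sum accounting is the extra idea you are missing; it also has the advantage of requiring only a \emph{lower} bound on $\xi_t=P^\pi(T_B>t)$, whereas your $\int|f-g|$ route needs the two-sided Gronwall estimate.

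A secondary but genuine error: your parenthetical ``$1\le e^{2\lambda\tau}\le 1+O(\kappa)$ (which follows from $2\tau\lambda\le-\log\kappa$)'' is wrong as stated. The hypothesis $2\tau\lambda\le-\log\kappa$ only gives $e^{2\lambda\tau}\le 1/\kappa$, which blows up for small $\kappa$. The correct justification is that $\kappa=4\tfrac{\lambda_0}{\lambda}(d(\tau)+\tau\lambda_0)\ge 4\tau\lambda_0\ge 4\tau\lambda$, so $2\tau\lambda\le\kappa/2$, whence $e^{2\lambda\tau}\le e^{\kappa/2}\le 1+\kappa$ for $\kappa\le 1$.
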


\begin{remark}
We note that $\lambda_0\ge \lambda$ so that $2\tau \lambda_0\le \kappa/2$ and the estimate of Proposition~\ref{prop:Aldous} entails that under the same assumptions
 \begin{align*}
d_{\mathrm{TV}}&\Bigl(P^\pi_G\circ(T_B,X_{T_B})^{-1}, \mathrm{Exp}(\lambda)\otimes \sfrac1\lambda\enter_{B,G}^\tau\Bigr)\leq   \pi(B)+  \kappa\bigl (\sfrac 32+\log \sfrac 1\kappa\bigr).
\end{align*}
Moreover, for the conditional distribution $\pi_{B^c}=\pi(B^c)^{-1}\pi|_{B^c}$ we get that
 \begin{align*}
d_{\mathrm{TV}}&\Bigl(P^{\pi_{B^c}}_G\circ(T_B,X_{T_B})^{-1}, \mathrm{Exp}(\lambda)\otimes\sfrac 1\lambda\enter_{B,G}^\tau\Bigr) \leq     \kappa\bigl (\sfrac 32+\log \sfrac 1\kappa\bigr) \pi(B^c)^{-1}.
\end{align*}
\end{remark}

\begin{proof} 
%
  We let $\xi_t=P_G^\pi(T_B>t)$ for $t\geq 0$ and observe that by (a) $\xi_t$ is continuously differentiable since the distribution of $T_B$ has no atoms outside of $0$. Further we get with Lemma~\ref{le:tec_est} (a) that
$$
-\dot \xi_t=\sum_{y\in B} \pi_y\sum_{z\in B^c} L(y,z)  \,P_G^z(T_B>t)
$$
Using (b) we thus obtain with $\eta:=2d(\tau)+2\tau \lambda_0$ that for $t\ge 2\tau$
$$
-\dot \xi_t\leq \lambda \xi_t+ \underbrace{\sum_{y\in B} \pi_y\sum_{z\in B^c}  L(y,z)}_{=\lambda_0} \ \eta.
$$
Moreover,
$$
\xi_{2\tau} \ge \pi(B^c)- 2\tau \lambda_0.
$$

Consequently, $(\xi_t)_{t\geq 2\tau}$ is larger than the solution to the inhomogeneous linear differential equation
$$
-\dot \zeta_t= \lambda \zeta_t +\lambda_0 \,\eta,  \ \ \zeta_{2\tau}=\pi(B^c)-2\tau \lambda_0
$$
whose unique solution is $(\zeta_t)_{t\ge 2\tau}$ given by  
\begin{align*}\zeta_t&=(\pi(B^c)-2\tau \lambda_0) e^{-\lambda (t-2\tau)}-\lambda_0\eta \lambda^{-1}(1-e^{-\lambda (t-2\tau)})\\
& \geq e^{2\tau \lambda} (\pi(B^c)-2\tau \lambda_0) e^{-\lambda t}-\lambda_0\eta /\lambda.
\end{align*}
 Let $\alpha,\beta:(0,\infty)\times B \to [0,\infty)$ denote the densities of $P_G^\pi \circ (T_B,X_{T_B})^{-1}$ (ignoring the atom at zero) and $\mathrm{Exp}(\lambda)\otimes\sfrac1\lambda \enter_{B,G}^\tau$, respectively. Then
\begin{align*}
\mathrm{err}:=&\ d_{\mathrm{TV}}\Bigl(P^\pi_G\circ(T_B,X_{T_B})^{-1},\mathrm{Exp}(\lambda)\otimes\sfrac1\lambda \enter_{B,G}^\tau\Bigr)\\
=&\ 1- \int_0^\infty \sum_{y\in B} \alpha(s,y)\wedge \beta(s,y)\,\dd s.
\end{align*}
Again by Lemma~\ref{le:tec_est}, we get that for $s\ge 2\tau$,  
\begin{align}\begin{split}\label{eq9427}
\alpha(s,y)&\geq \pi_y\sum_{z\in B^c} L(y,z)\bigl(P^z(T_B>\tau)\, \xi_{s}-\eta\bigr)\\
&\geq \pi_y\sum_{z\in B^c} L(y,z)\bigl(P^z(T_B>\tau)\, (e^{2\tau \lambda} (\pi(B^c)-2\tau \lambda_0 e^{-\lambda s} -\lambda_0\, \eta/\lambda)-\eta\bigr)\\
&= (\pi(B^c)-2\tau \lambda_0)e^{2\tau \lambda} \beta(s,y)\\
&\qquad - \pi_y\sum_{z\in B^c} L(y,z)\bigl(P^z(T_B>\tau)\,  \lambda_0\, \eta/\lambda+\eta\bigr).
\end{split} \end{align}
Note that 
$$
\sum_{y\in B} \pi_y\sum_{z\in B^c} L(y,z)\bigl(P^z(T_B>\tau)\,  \lambda_0\, \eta/\lambda+\eta\bigr) \le 2 \lambda_0 \eta =\kappa \lambda.
$$
Letting $\rho:= \bigl((\pi(B^c)-2\tau \lambda_0)e^{2\tau \lambda}\bigr)\wedge1$ we get with~(\ref{eq9427}) that
$$
\sum_{y\in B} \alpha(s,y)\wedge \beta(s,y) \geq  \rho \lambda e^{-\lambda s} -\kappa \lambda.
$$
We set  $t=\lambda^{-1} \log (\rho/\kappa)$ which is by assumption bigger or equal to $2\tau$ and obtain that
and
$$
\mathrm{err}\leq 1-  \int_{2\tau}^t (\rho \lambda e^{-\lambda s}-\kappa \lambda)\,\dd s =1-\rho e^{-2\tau \lambda}+\underbrace{\rho e^{-t\lambda }}_{=\kappa}+\kappa\underbrace{\lambda (t-2\tau)}_{\le \log(1/\kappa)}
$$
We recall that $\lambda\le \lambda_0$ so that by definition of $\rho$
$$
1-\rho e^{-2\tau \lambda}=\max(\underbrace{1-(\pi(B^c)-2\tau \lambda_0}_{=\pi(B)+2\tau \lambda_0}, \underbrace{1-e^{-2\tau \lambda}}_{\le 2\tau\lambda})\le \pi(B)+2\tau \lambda_0
$$
which entails the result.
\end{proof}

In our application we need a combined estimate for the distribution of the path of length $\tau$ and  the next macroscopic entrance event. In the next step, this will allow us to couple the visiting measures  with an appropriate Poisson point process.

\begin{proposition}\label{prop_1241}Under the same assumptions as in Proposition~\ref{prop:Aldous} one has
\begin{align*}
d_{\mathrm{TV}}\bigl(& P^z\circ ((X_t)_{t\in[0,\tau]} , T_B^\tau,X_{T_B^\tau})^{-1},P^z_G\circ((X_t)_{t\in[0,\tau]})^{-1}\otimes \mathrm{Exp}(\lambda) \otimes\sfrac1{ \capa_{G}^\tau(B)} \enter_{B,G}^\tau \bigr)\\
&\le 2 \pi(B)+ \kappa (3+\log 1/\kappa),
\end{align*}
where 
$$
T_B^\tau=\inf\{t\ge\tau:X_t\in B\}.
$$
\end{proposition}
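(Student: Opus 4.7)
Write $\nu:=\mathrm{Exp}(\lambda)\otimes \sfrac1{\capa_{G}^{\tau}(B)}\enter_{B,G}^{\tau}$ for the proposed product distribution on $[0,\infty)\times B$. The plan is to apply the strong Markov property at time $\tau$ and then reduce to Proposition~\ref{prop:Aldous} via a coupling that replaces the random, non-stationary state $X_\tau$ with a $\pi$-distributed one.

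By the strong Markov property, the conditional distribution of $(T_B^\tau-\tau, X_{T_B^\tau})$ given $(X_t)_{t\in[0,\tau]}$ equals $P^{X_\tau}\circ(T_B, X_{T_B})^{-1}$. Since the last two coordinates of the target are $\nu$, independent of the first, the dual characterisation of total variation yields
\begin{align*}
d_{\mathrm{TV}}\bigl(&P^z\circ ((X_t)_{t\in[0,\tau]}, T_B^\tau, X_{T_B^\tau})^{-1},\, P^z\circ ((X_t)_{t\in[0,\tau]})^{-1}\otimes \nu\bigr)\\
&\le E^z\bigl[d_{\mathrm{TV}}\bigl(P^{X_\tau}\circ (T_B, X_{T_B})^{-1}, \nu\bigr)\bigr].
\end{align*}
Inserting $P^\pi\circ(T_B, X_{T_B})^{-1}$ as intermediate distribution, Proposition~\ref{prop:Aldous} together with its Remark bounds the stationary piece $d_{\mathrm{TV}}(P^\pi\circ(T_B, X_{T_B})^{-1}, \nu)$ by $\pi(B)+\kappa(\tfrac{3}{2}+\log \tfrac1{\kappa})$, which contributes one $\pi(B)$-term and the bulk of the $\kappa$-logarithm in the statement.

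For the remaining piece $E^z[d_{\mathrm{TV}}(P^{X_\tau}\circ(T_B, X_{T_B})^{-1}, P^\pi\circ(T_B, X_{T_B})^{-1})]$, I couple a continuation of the chain started at $X_\tau$ with a $P^\pi$-chain by maximally coupling their states at time $\tau$ of the new chains; after averaging in $X_\tau$ this coincidence event has probability at least $1-d_G(\tau)$, and on it the two chains can be identified for all subsequent times, so their $B$-entrance data agree unless one of them reaches $B$ beforehand. The short-time estimates $P^\pi(T_B\le\tau)\le \pi(B)+\tau\lambda_0$ (summing the initial mass in $B$ with the expected number of $B^c\to B$ transitions, exactly as in the proof of Proposition~\ref{prop:Aldous}) and $E^z[P^{X_\tau}(T_B\le \tau)]\le \pi(B)+\tau\lambda_0+d_G(\tau)$ (obtained by a further one-shot maximal coupling of $(X_{\tau+s})_{s\in[0,\tau]}$ with a stationary chain) bound the failure contribution by roughly $2\pi(B)+2\tau\lambda_0+2d_G(\tau)$. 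Summing both triangle terms and using $\lambda\le \lambda_0$ together with~(\ref{eq9873457}) to deduce $d_G(\tau)+\tau\lambda_0\le \kappa/4$ absorbs the lower-order terms into the $\kappa(3+\log 1/\kappa)$-factor in the statement.

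The main technical obstacle is the passage from a marginal TV bound for $(T_B^\tau-\tau, X_{T_B^\tau})$ alone to a \emph{joint} bound against a product measure with $(X_t)_{t\in[0,\tau]}$; this is what forces the average of the continuation's TV distance over the random starting state $X_\tau$, and in turn the coupling analysis with a stationary chain. The extra $\pi(B)$-term and the enlargement of $\kappa(3/2+\log 1/\kappa)$ to $\kappa(3+\log 1/\kappa)$ relative to Proposition~\ref{prop:Aldous} both originate from handling the short-time hitting of $B$ before the maximal couplings can take effect.
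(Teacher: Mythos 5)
Your reduction via the conditional--TV inequality and a triangle through $P^\pi\circ(T_B,X_{T_B})^{-1}$ is the right overall shape, but the middle coupling you propose costs one $\pi(B)$ too many. When you compare $P^{X_\tau}\circ(T_B,X_{T_B})^{-1}$ with $P^\pi\circ(T_B,X_{T_B})^{-1}$ by a maximal coupling of the two continuations at time $\tau$, you must concede the events that \emph{either} continuation hits $B$ during $[0,\tau)$; after averaging over $X_\tau$ this costs roughly $\pi(B)+\tau\lambda_0+d_G(\tau)$ for the first chain and $\pi(B)+\tau\lambda_0$ for the stationary one, i.e.\ two $\pi(B)$'s from this step alone. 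Added to the $\pi(B)$ already present in the bound of Proposition~\ref{prop:Aldous}, your estimate has the form $3\pi(B)+\kappa(c+\log\tfrac1\kappa)$ for some $c<3$; since nothing in the hypotheses bounds $\pi(B)$ by a multiple of $\kappa$, this cannot in general be absorbed into the stated $2\pi(B)+\kappa(3+\log\tfrac1\kappa)$, so the claimed bound is not established.

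The paper avoids the surplus $\pi(B)$ by choosing a different object for the conditional comparison. It conditions on $(X_t)_{t\in[0,\tau]}$ and considers $(T_B^{2\tau}-2\tau,X_{T_B^{2\tau}})$, whose conditional law is $P^{X_\tau}\circ(T_B^\tau-\tau,X_{T_B^\tau})^{-1}$: because this functional ignores whatever the continuation does during its first $\tau$ time units, a maximal coupling of the continuation's state at time $\tau$ with a fresh $\pi$-sample matches the laws exactly on the coupling event, and the comparison with $P^\pi\circ(T_B,X_{T_B})^{-1}$ costs only $d_G(\tau)$, uniformly in $X_\tau$, with no $\pi(B)$. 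The $\pi(B)$ you incur for ``chain 1 hits $B$ early'' therefore disappears. The paper then pays a \emph{single} additional $\pi(B)$ in the separate estimate $P^z\bigl((T_B^{2\tau},X_{T_B^{2\tau}})\neq(T_B^\tau,X_{T_B^\tau})\bigr)\le d_G(\tau)+\pi(B)+\tau\lambda_0$, and finally shifts the exponential by $2\tau$ at a cost of $2\tau\lambda$. To repair your argument, replace $T_B$ of the continuation by $T_B^\tau-\tau$ in the conditional TV bound (so the coupling cost is just $d_G(\tau)$), and then estimate the passage from $(T_B^{2\tau},X_{T_B^{2\tau}})$ to $(T_B^\tau,X_{T_B^\tau})$ as one extra event, which contributes exactly the remaining $\pi(B)$.
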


\begin{proof}
We recall that by Proposition~\ref{prop:Aldous}
\begin{align*}
d_{\mathrm{TV}}&\bigl(P^\pi\circ (T_B,X_{T_B})^{-1}, \mathrm{Exp}(\lambda) \otimes \sfrac1\lambda e_{B,G}^\tau\bigr)\\
&\le \pi(B)+2\tau \lambda_0 + \kappa (1+\log 1/\kappa)
\end{align*}
Moreover, for every $z\in V$
$$
d_{\mathrm{TV}}\bigl(P^z\circ (T_B^\tau -\tau,X_{T_B^\tau})^{-1}, P^\pi\circ (T_B,X_{T_B})^{-1}\bigr)\le d(\tau)
$$
so that
\begin{align*}
d_{\mathrm{TV}}\bigl(& P^z\circ ((X_t)_{t\in[0,\tau]} , T_B^{2\tau}-2\tau,X_{T_B^{2\tau}}))^{-1},\\
&P^z\circ ((X_t)_{t\in[0,\tau]})^{-1} \otimes   P^\pi\circ (T_B,X_{T_B})^{-1}\bigr)\le d(\tau).
\end{align*}
Since 
\begin{align*} P^z&((T_B^{2\tau},X_{T_B^{2\tau}})\not=(T_B^\tau  ,X_{T^\tau_B}))= P^z((X_t)_{t\in[\tau,2\tau)}\text{ hits } B)\\
& \le d(\tau)+\pi(B)+\tau \lambda_0
\end{align*}
we also get that
\begin{align*}
d_{\mathrm{TV}}\bigl(& P^z\circ ((X_t)_{t\in[0,\tau]} , T_B^\tau-2\tau,X_{T_B^\tau})^{-1},\\
&P^z\circ ((X_t)_{t\in[0,\tau]})^{-1} \otimes   P^\pi\circ (T_B,X_{T_B})^{-1}\bigr)\le 2d(\tau)+\pi(B)+\tau \lambda_0.
\end{align*}
Altogether we thus get that
\begin{align*}
d_{\mathrm{TV}}\bigl(& P^z\circ ((X_t)_{t\in[0,\tau]} , T_B^\tau -2\tau,X_{T_B^\tau})^{-1},P^z\circ ((X_t)_{t\in[0,\tau]})^{-1} \otimes \mathrm{Exp}(\lambda) \otimes \sfrac1\lambda e_{B,G}^\tau\bigr)\\
&\le 2d(\tau)+2 \pi(B)+3\tau \lambda_0 + \kappa (1+\log 1/\kappa)
\end{align*}
Shifting the exponential distribution $\mathrm{Exp}(\lambda)$  by $2\tau$ time units causes another perturbation that is bounded by $2\tau\lambda\le 2\lambda_0)$ and we finally get that
\begin{align*}
d_{\mathrm{TV}}\bigl(& P^z\circ ((X_t)_{t\in[0,\tau]} , T_B^\tau,X_{T^\tau_B})^{-1},P^z\circ ((X_t)_{t\in[0,\tau]})^{-1} \otimes \mathrm{Exp}(\lambda) \otimes \sfrac1\lambda\enter_{B,G}^\tau \bigr)\\
&\le 2d(\tau)+2 \pi(B)+5\tau Q(B) + \kappa (1+\log 1/\kappa).
\end{align*}
\end{proof}

\begin{proposition}\label{prop8743} Let $G\in\IG$ be a finite graph and let $\tau>0$, $B\subset V$ and $a>0$. We  denote by $\Xi$ the $(B,\tau, a)$-local visits  of an $L_G$-process $X$ started in equilibrium. Further let 
$\kappa$ be as in Prop.~\ref{prop:Aldous}.

We denote by  $\Gamma$  the Poisson point process on $[0,\infty)\times W_+$ with intensity measure 
$$ \frac{a}{\alpha(V)}\, \mathrm{Leb}|_{[0,\infty)}\otimes \bigl( P^{\enter_{B,G}^{\tau}}_G\circ ((X_t)_{t\in[0,\tau]})^{-1}\bigr).$$ 
Then for every $t\geq 0$ and $\rho\in\N$
$$
d_{\mathrm{TV}} (\Xi|_{[0,t]\times W_+}, \Gamma|_{[0,t]\times W_+})\leq c(G,B,\tau) \,\rho+2^{-\rho} e^{a\capa_G^\tau(B) t/\alpha(V)},
$$
where
$$
c(G,B,\tau)=2 \pi(B)+ \kappa (3+\log 1/\kappa).
$$
\end{proposition}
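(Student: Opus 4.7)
The plan is to build an iterative coupling between the sequence of points of $\Xi$ and those of $\Gamma$, using Proposition~\ref{prop_1241} to couple one step at a time. Unpacking the Poisson point process $\Gamma$ with the given intensity, the arrival times form a rate $a\lambda$ Poisson process on $[0,\infty)$ (where $\lambda=\capa_G^\tau(B)/\alpha(V)$) and, independently, the mark of the $k$th arrival is a $P_G^{\mu}$-path on $[0,\tau]$, with $\mu:=\frac 1{\capa_G^\tau(B)}\enter_{B,G}^\tau$. Thus the Poisson process can be generated by drawing, independently for each $k$, an exponential inter-arrival time $S_k\sim\mathrm{Exp}(\lambda)$ and an independent equilibrium start $Z_k\sim\mu$, and then, conditionally, a $P_G^{Z_k}$-path $V_k$ on $[0,\tau]$.

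I would first handle step one by invoking Proposition~\ref{prop:Aldous}: since $X_0\sim\pi$, one can couple $(T_1,X_{T_1})$ with $(S_1,Z_1)$ so that the two agree with probability at least $1-c(G,B,\tau)$ (the displayed bound from Proposition~\ref{prop:Aldous} is in fact a bit smaller than $c(G,B,\tau)$, using $2\tau\lambda_0\le\kappa$). On success, set $Y_1:=(X_{T_1+t})_{t\in[0,\tau]}$ and $V_1:=Y_1$; by the strong Markov property the path $Y_1$ is, conditionally on $X_{T_1}=Z_1$, a $P_G^{Z_1}$-path, which is exactly the distribution of $V_1$. For steps $k\ge 2$, proceed inductively: given the coupling has succeeded through step $k-1$, we have $X_{T_{k-1}}=Z_{k-1}$, and we apply Proposition~\ref{prop_1241} with starting state $z=Z_{k-1}$ to couple the triple
\[
\bigl(Y_{k-1},\,T_k-T_{k-1},\,X_{T_k}\bigr) \quad \text{with} \quad \bigl(V_{k-1},\,S_k,\,Z_k\bigr),
\]
where the right-hand side is distributed as $P_G^{Z_{k-1}}(\cdot)\otimes\mathrm{Exp}(\lambda)\otimes\mu$. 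Each application incurs a TV error of at most $c(G,B,\tau)$.

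Carrying out $\rho$ such coupling steps and taking a union bound yields a joint law under which, with probability at least $1-\rho\, c(G,B,\tau)$, the first $\rho$ points of $\Xi$ (ordered by arrival time) coincide with the first $\rho$ points of $\Gamma$, that is, $T_k/a=\sum_{j\le k}S_j/a$ and $Y_k=V_k$ for $k=1,\dots,\rho$. On the complementary event of the number $N_\Gamma$ of points of $\Gamma|_{[0,t]\times W_+}$ being at most $\rho$, the first $\rho$ matched points exhaust both restrictions to $[0,t]$, so $\Xi|_{[0,t]\times W_+}=\Gamma|_{[0,t]\times W_+}$. Since $N_\Gamma$ is Poisson with mean $at\lambda=a\capa_G^\tau(B)t/\alpha(V)$, Markov's inequality applied to $2^{N_\Gamma}$ gives
\[
\P(N_\Gamma>\rho)\le 2^{-\rho}\,\E[2^{N_\Gamma}]=2^{-\rho}\,e^{a\capa_G^\tau(B)t/\alpha(V)}.
\]
Combining the two error sources yields the claimed bound via the usual coupling characterisation of total variation distance.

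The main technical obstacle is the composition of the individual couplings: Proposition~\ref{prop_1241} gives a TV bound for a fixed starting state $z$, so to iterate one must pass from this pointwise statement to a sequential coupling that respects the Markov structure of $(X_{T_k})_k$. This is handled by conditioning on the past at each step and exploiting that the bound in Proposition~\ref{prop_1241} is uniform in $z$; then a standard inductive construction (on the event of previous success, build the current step via the given one-step coupling, and let the step fail on the complementary event) lets the errors accumulate additively, giving the factor $\rho$ in the final estimate.
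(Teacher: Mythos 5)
Your proof follows essentially the same route as the paper: iterate a one‑step coupling using Propositions~\ref{prop:Aldous} and~\ref{prop_1241} (the paper packages the iteration through the TV chain rule of Lemma~\ref{le0346}, but this is the same mechanism as your sequential coupling), then control the leftover probability that more than $\rho$ points fall in $[0,t]$ via an exponential/Chernoff-type tail, and both arguments land on the same bound. One small slip worth fixing: the statement ``on the event $N_\Gamma\le\rho$ the first $\rho$ matched points exhaust both restrictions'' is not correct when $N_\Gamma=\rho$ exactly, since the $(\rho+1)$-th point of $\Xi$ is uncoupled and could still fall in $[0,t]$, so the restrictions need not agree. The correct good event is $N_\Gamma\le\rho-1$, i.e.\ you should bound $\P(N_\Gamma\ge\rho)$; this is exactly what the paper does (via $\P(S_1+\dots+S_\rho\le t)$), and your Markov inequality applied to $2^{N_\Gamma}$ gives $\P(N_\Gamma\ge\rho)\le 2^{-\rho}\E[2^{N_\Gamma}]=2^{-\rho}e^{a\capa_G^\tau(B)t/\alpha(V)}$ unchanged, so the final estimate is unaffected.
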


The proof is based on the following lemma whose proof is straight-forward and  left to the reader.
\begin{lemma}\label{le0346}
For $i=1,2$ let  $A_i$ and $B_i$ be random variables taking values in the same Polish space.
Then 
$$
d_{\mathrm{TV}}(\P_{A_1,A_2},\P_{B_1,B_2}) \leq d_{\mathrm{TV}}(\P_{A_1},\P_{B_1}) + \E[d_{\mathrm{TV}}(\P_{A_2|A_1}(A_1,\cdot), \P_{B_2|B_1}(A_1,\cdot))],
$$
where $\P_{A_2|A_1}$ and $\P_{B_2|B_1}$ are arbitrary regular conditional distributions.
\end{lemma}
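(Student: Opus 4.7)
The plan is to use the standard disintegration argument for total variation distance on a product space, inserting a hybrid measure and applying the triangle inequality. Throughout, let $\mu=\P_{A_1,A_2}$ and $\nu=\P_{B_1,B_2}$, and denote the marginals by $\mu_1=\P_{A_1}$, $\nu_1=\P_{B_1}$ and the regular conditional kernels by $K(x,\cdot)=\P_{A_2|A_1}(x,\cdot)$ and $L(x,\cdot)=\P_{B_2|B_1}(x,\cdot)$, which exist because we are working on Polish spaces. Then $\mu=\mu_1\otimes K$ and $\nu=\nu_1\otimes L$.

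First, I would introduce the hybrid measure $\eta=\mu_1\otimes L$ (same first marginal as $\mu$, but the conditional from $\nu$) and use the triangle inequality
\[
d_{\mathrm{TV}}(\mu,\nu)\le d_{\mathrm{TV}}(\mu_1\otimes K,\mu_1\otimes L)+d_{\mathrm{TV}}(\mu_1\otimes L,\nu_1\otimes L).
\]
For the first term, I would test against an arbitrary measurable set $C$ and compute
\[
|(\mu_1\otimes K)(C)-(\mu_1\otimes L)(C)|=\Bigl|\int\bigl(K(x,C_x)-L(x,C_x)\bigr)\,\mu_1(\dd x)\Bigr|\le \int d_{\mathrm{TV}}(K(x,\cdot),L(x,\cdot))\,\mu_1(\dd x),
\]
where $C_x$ is the $x$-section. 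Taking the supremum over $C$ yields the first term in the claimed bound, namely $\E[d_{\mathrm{TV}}(\P_{A_2|A_1}(A_1,\cdot),\P_{B_2|B_1}(A_1,\cdot))]$.

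For the second term, I would again test against $C$ and set $f(x)=L(x,C_x)\in[0,1]$. Then
\[
|(\mu_1\otimes L)(C)-(\nu_1\otimes L)(C)|=\Bigl|\int f(x)\,(\mu_1-\nu_1)(\dd x)\Bigr|\le d_{\mathrm{TV}}(\mu_1,\nu_1),
\]
using the standard fact that $|\!\int f\,\dd(\mu_1-\nu_1)|\le d_{\mathrm{TV}}(\mu_1,\nu_1)$ for any measurable $f$ with values in $[0,1]$ (which follows, e.g., from the layer-cake representation $\int f\,\dd(\mu_1-\nu_1)=\int_0^1(\mu_1-\nu_1)(\{f>t\})\,\dd t$). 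Supremizing over $C$ gives $d_{\mathrm{TV}}(\P_{A_1},\P_{B_1})$, and combining the two bounds finishes the proof.

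There is no real obstacle here; the only thing to be careful about is measurability of the maps $x\mapsto d_{\mathrm{TV}}(K(x,\cdot),L(x,\cdot))$ and $x\mapsto L(x,C_x)$, which follows from the Polish setting and standard monotone class / Fubini arguments for regular conditional kernels. For this reason the lemma is stated with ``arbitrary'' regular conditional distributions: any measurable versions work, since redefining them on a $\mu_1$-null set does not affect either side.
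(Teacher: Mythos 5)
Your proof is correct: the hybrid decomposition $\mu_1\otimes K \to \mu_1\otimes L \to \nu_1\otimes L$, the sectionwise bound for the first term, and the bound $|\int f\,\mathrm{d}(\mu_1-\nu_1)|\le d_{\mathrm{TV}}(\mu_1,\nu_1)$ for $f$ valued in $[0,1]$ for the second term constitute exactly the standard argument the paper has in mind when it declares the proof ``straight-forward and left to the reader,'' so there is no competing argument in the paper to compare against. The only imprecision is your closing remark: versions of $\P_{B_2|B_1}$ are determined only up to $\nu_1$-null sets, which need not be $\mu_1$-null, so different versions of $L$ can change the value of the right-hand side; what matters (and what your proof actually delivers) is that the inequality holds for every choice of versions, since $\nu=\nu_1\otimes L$ for any of them.
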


\begin{proof}[Proof of Proposition \ref{prop8743}] As in Section~\ref{sec_visit}, we denote by $T_1,\dots$ and $Y_1,\dots$ the $(B,\tau)$-entrance times and $(B,\tau)$-visits of $X$, respectively. 
We control the total variation distance between the distributions of 
$$(T_1,Y_1,\dots, Y_{\rho-1},T_\rho-T_{\rho-1})$$ and $$(S_1,Z_1,\dots,S_{\rho-1}, Z_{\rho-1},S_\rho ),$$ where $S_1,\dots$ and $Z_1,\dots$ are independent random variables, the former ones being $\mathrm{Exp}(a\capa^\tau_G(B)/\alpha(V))$-distributed and the latter ones being $P^{\frac 1{\capa_G^\tau(B)}\enter_{B,G}^{\tau}}_G\circ ((X_t)_{t\in[0,\tau]})^{-1}$-distributed processes.
It is more convenient to control the total variation distance of the extended vectors  $${\bf Y}^\rho:=(T_1,Y_1(0),Y_1,T_2-T_1,Y_2(0),\dots,Y_{\rho-1},T_\rho-T_{\rho-1},Y_\rho(0))$$ and  
$${\bf Z}^\rho:=(S_1,Z_1(0),Z_1,S_2,Z_2(0),\dots,Z_{\rho-1},S_\rho,Z_\rho(0)).
$$
We choose $A_1={\bf Y}^{\rho-1}$, $A_2=(Y_{\rho-1},T_{\rho}-T_{\rho-1}, Y_{\rho}(0))$, $B_1={\bf Z}^{\rho-1}$ and $B_2=(Z_{\rho-1},S_{\rho}, Z_{\rho}(0))$ and apply Proposition~\ref{prop_1241} together with Lemma~\ref{le0346} to infer that
$$
d_{\mathrm{TV}}({\bf Y}^\rho,{\bf Z}^\rho)\leq d_{\mathrm{TV}}({\bf Y}^{\rho-1},{\bf Z}^{\rho-1})+ c(G,B,\tau).
$$
Iteration of this argument yields together with an application of Proposition~\ref{prop:Aldous} that
$$
d_{\mathrm{TV}}({\bf Y}^\rho,{\bf Z}^\rho)\leq d_{\mathrm{TV}}((T_1,Y_1(0)), (S_1,Z_1(0)))+ (\rho-1) c(G,B,\tau)\leq \rho \,c(G,B,\tau).
$$
Note that  the Poisson point process $\Gamma$ has the same distribution as
$$
\sum_{k=1}^\infty \delta_{(S_1+\ldots+S_k, Z_k)}
$$
 so that for positive $\theta$
\begin{align*}
d_{\mathrm{TV}}(\Xi|_{[0,t]\times D_+},& \Gamma|_{[0,t]\times D_+})\leq d_{\mathrm{TV}}({\bf Y}^\rho,{\bf Z}^\rho) +\P(S_1+\ldots+S_{\rho}\leq t)\\
&\leq  m c+\E[e^{-\theta  S_1}]^\rho/e^{-\theta t}=\rho C +\Bigl(\frac{1}{1+\alpha(V)\theta/(a\capa_G^\tau(B)})\Bigr)^\rho e^{\theta t}.
\end{align*}
Choosing $\theta= a\capa_G^\tau(B)/\alpha(V)$ yields the result.
\end{proof}

\section{Proof of the main result}

As we have seen in Proposition~\ref{prop8743} that under appropriate mixing conditions the visiting measure is  related to a Poisson point process. In the view of local approximation the equilibrium measures appearing  in the intensity measures are in a certain sense global properties. In oder to establish coupling we need to replace these quantities by quantities that can be derived via exploration.

We introduce the concept for a fixed  general finite graph $G^o\in\IG^\mathrm{root}$. Later we will apply the results on the sequence of random finite graphs.

\begin{construction}\label{constr1}Beyond a fixed rooted graph $G^o\in\IG^\mathrm{root}$ the construction depends on four parameters $m,R\in\N$ and $\tau,\mathfrak a>0$.

 Let   $ X^{(1)}=(X^{(1)}_t)_{t\in[0,2\tau]}$, $X^{(2)}=(X^{(2)}_t)_{t\in[0,2\tau]},\dots$ be independent $P^{o}_{G}$-distributed processes  (killed at time $2\tau$).
 Based on the trajectories we build two stacks of paths. First we use the first $m$ trajectories $ X^{(1)},\dots, X^{(m)}$ to build stacks labeled by the vertices in $\partial V^{o}[R+1]$. 
For this we briefly write $ T_r^{(k)}$ $(r\in\{R,R+1\})$ for the first hitting time of $ X^{(k)}$ of $\partial V^{o_n}[r]$ and put for every $k=1,\dots,m$ with $T_{R+1}^{(k)}\le \tau$ the  path $( X^{(k)}_{ T^{(k)}_{R+1}+t })_{t\in[0,\tau]}$  on the $X^{(k)}_{ T^{(k)}_{R+1}}$-stack.
The second stack makes use of the trajectories from index $m+1$ on. We iteratively put  for every $k=m+1,\dots$ with $ T_{R}^{(k)}\le \tau$ the  path $(X^{(k)}_{ T^{(k)}_{R}+t })_{t\in[0,\tau]}$  on the $ X^{(k)}_{ T^{(k)}_{R}}$-stack.

Note that by construction the number of trajectories on the first kind of stacks forms a multinomially-distributed random variable with $m$-draws and success probabilities 
$$P^{o}_{G} (T_{\partial V_n^{o}[R+1]}\le \tau, X_{T_{\partial V^{o}[R+1]}}=v).
$$
Moreover,  every stack $v\in \partial V^{o}[R]$ (of the second kind) has infinitely many paths since
$P^{o}_{G} (T_{\partial V_n^{o}[R]}\le \tau, X_{T_{\partial V^{o}[R]}}=v)$ is strictly positive. 
Given the  sizes of the  stacks of the first kind, all stacks  contain independent $L_G$-Makrov processes on $[0,\tau]$ started in the label of the respective stack.

We declare every trajectory on one of the stacks $v$ of first kind as $v$-escape if the trajectory does not hit $V^o[R]$ and we denote by  $\mathfrak s_v$ the relative number of $v$-escapes  on the $v$-stack (with the convention that  $\mathfrak s_v=0$, if the stack is empty). 
We let, for every vertex $v\in V$,
$$
\mathfrak e_{V^o[R],G}^{\tau}(v)=\1_{V^o[R]} (v)\,\alpha (v) \sum_{w\in \partial V^o[R+1]} L_{v,w}^G \mathfrak s_w.
$$
We introduce a doubly stochastic Poisson point process (Cox process) $\gamma$ on $[0,\infty)\times \partial V^o[R]$ that is conditionally on the first $m$ trajectories a Poisson point process with density $\mathfrak a^{-1}\mathrm{Leb}|_{[0,\infty)}\otimes \mathfrak e_{V^o[R],G}^\tau$ and is independent of the paths $m+1,\dots$. Next, we construct a point process 
$\Gamma$ on $[0,\infty)\times W_+$ by ordering for each $v\in \partial V^o[R]$ the epochs of $\gamma$ on $[0,\infty)\times \{v\}$ increasingly and adding for the $k$-th point, say at time $s_k$, the tuple consisting of $s_k$ and the $k$-th trajectory of the $v$-stack to $\Gamma$. By construction the trajectories on the stacks are independent of $\gamma$ and we get that $\Gamma$ is a Cox-process that is, given the first $m$ trajectories, a Poisson point process with random intensity measure
$$
\mathfrak a^{-1}\mathrm{Leb}|_{[0,\infty)}\otimes P^{ \mathfrak e_{V^o[R],G}^\tau}\circ ((X_t)_{t\in[0,\tau]})^{-1}.
$$  
 \end{construction}

\begin{proposition}\label{prop:main}
Let $G^o\in\IG$ be a finite graph, $m,R\in\N$  and $\tau ,\mathfrak a, a,\sigma>0$. We assume the assumptions of Proposition~\ref{prop:Aldous} and choose $c_{G,V^o[R],\tau}$  as in Proposition~\ref{prop8743}.
We denote by $\Xi$ the $(V^o[R],\tau,a)$-visiting measure of a $P_G$-Markov process started in equilibrium and by $\Gamma$ the Cox-process constructed in Construction~\ref{constr1}. Then one has
\begin{align*}
d_{\mathrm{TV}}&(\Xi|_{[0,\sigma]\times W_+},\Gamma|_{[0,\sigma]\times W_+})\\
 \le&\,  \E\Bigl[1\wedge \bigl( c_{G,V^o[R],\tau} \rho +2^{-\rho} e^{a \capa_G^\tau(V^o[R]) \sigma/\alpha(V)}+ \sigma \bigl\|\mathfrak a^{-1} \mathfrak e_{V^o[R],G}^\tau- \sfrac a{\alpha(V)} e_{V^o[R],G}^\tau \bigr\|_1\bigr)\Bigr],
\end{align*}
where $\|\cdot\|_1$ denotes the $\ell^1$-norm and
\end{proposition}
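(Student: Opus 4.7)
My plan is to interpolate between $\Xi$ and $\Gamma$ through an auxiliary genuine Poisson point process to which Proposition~\ref{prop8743} directly applies, and then bound the Poisson-to-Cox passage using a standard coupling after conditioning on the first $m$ exploration trajectories.

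\textbf{Step 1: the auxiliary Poisson process.} Let $\tilde\Gamma$ denote the Poisson point process on $[0,\infty)\times W_+$ with intensity
$$
\tilde\nu = \frac{a}{\alpha(V)}\,\mathrm{Leb}|_{[0,\infty)}\otimes \bigl(P_G^{e_{V^o[R],G}^{\tau}}\circ((X_t)_{t\in[0,\tau]})^{-1}\bigr).
$$
Applying Proposition~\ref{prop8743} with $B=V^o[R]$ and $t=\sigma$ yields
$$
d_{\mathrm{TV}}(\Xi|_{[0,\sigma]\times W_+},\tilde\Gamma|_{[0,\sigma]\times W_+})\le c_{G,V^o[R],\tau}\,\rho+2^{-\rho}\exp\!\bigl(a\,\capa_G^\tau(V^o[R])\sigma/\alpha(V)\bigr).
$$

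\textbf{Step 2: conditional Poisson comparison.} By Construction~\ref{constr1}, conditionally on the trajectories $X^{(1)},\dots,X^{(m)}$ (which determine $\mathfrak{e}_{V^o[R],G}^\tau$), the process $\Gamma$ is Poisson on $[0,\infty)\times W_+$ with intensity
$$
\nu = \mathfrak{a}^{-1}\,\mathrm{Leb}|_{[0,\infty)}\otimes \bigl(P_G^{\mathfrak{e}_{V^o[R],G}^{\tau}}\circ((X_t)_{t\in[0,\tau]})^{-1}\bigr).
$$
The classical thinning coupling of Poisson point processes (share the Poisson realisation associated with the measure-infimum $\tilde\nu\wedge\nu$, then add independent Poisson parts) gives
$$
d_{\mathrm{TV}}(\tilde\Gamma|_{[0,\sigma]\times W_+},\Gamma|_{[0,\sigma]\times W_+}\mid X^{(1)},\dots,X^{(m)})\le \bigl\|(\tilde\nu-\nu)|_{[0,\sigma]\times W_+}\bigr\|_{\mathrm{TV}}.
$$
Both measures have tensor-product form $\mathrm{Leb}|_{[0,\sigma]}\otimes\sum_{v\in V} f(v) P_G^v\circ((X_t)_{t\in[0,\tau]})^{-1}$ for a nonnegative function $f$ on $V$; since each $P_G^v$ is a probability measure on $W_+$, linearity of $f\mapsto \sum_v f(v) P_G^v$ yields
$$
\bigl\|(\tilde\nu-\nu)|_{[0,\sigma]\times W_+}\bigr\|_{\mathrm{TV}}\le \sigma\bigl\|\mathfrak{a}^{-1}\mathfrak{e}_{V^o[R],G}^\tau-\sfrac{a}{\alpha(V)}\,e_{V^o[R],G}^\tau\bigr\|_1.
$$

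\textbf{Step 3: triangle inequality and expectation.} Since $\Xi$ is independent of $X^{(1)},\dots,X^{(m)}$, combining Steps~1 and~2 via the triangle inequality gives a conditional bound
$$
d_{\mathrm{TV}}(\Xi|_{[0,\sigma]\times W_+},\Gamma|_{[0,\sigma]\times W_+}\mid X^{(1)},\dots,X^{(m)})\le c_{G,V^o[R],\tau}\rho+2^{-\rho}e^{a\capa_G^\tau(V^o[R])\sigma/\alpha(V)}+\sigma\bigl\|\mathfrak{a}^{-1}\mathfrak{e}_{V^o[R],G}^\tau-\sfrac{a}{\alpha(V)}e_{V^o[R],G}^\tau\bigr\|_1.
$$
Total variation distances being always bounded by one, we may replace the right-hand side by its minimum with $1$. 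The unconditional law of $\Gamma$ is the mixture of its conditional laws given the first $m$ trajectories, so the convexity of $d_{\mathrm{TV}}(\mathcal{L}(\Xi),\cdot\,)$ yields
$$
d_{\mathrm{TV}}(\Xi|_{[0,\sigma]\times W_+},\Gamma|_{[0,\sigma]\times W_+})\le \Erw{d_{\mathrm{TV}}(\Xi|_{[0,\sigma]\times W_+},\Gamma|_{[0,\sigma]\times W_+}\mid X^{(1)},\dots,X^{(m)})},
$$
from which the claimed estimate follows.

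The only delicate point is Step~2: one must verify that the tensor-product intensity measures differ only through their $V$-marginals (where $\mathfrak{e}^\tau$ versus $e^\tau$ enter linearly) so that the total variation between them reduces to the $\ell^1$-distance $\|\mathfrak{a}^{-1}\mathfrak{e}_{V^o[R],G}^\tau-\sfrac{a}{\alpha(V)}e_{V^o[R],G}^\tau\|_1$ multiplied by the time-window length $\sigma$. The rest is bookkeeping via the triangle inequality and the elementary Poisson coupling bound $d_{\mathrm{TV}}(\mathrm{Poi}(\mu_1),\mathrm{Poi}(\mu_2))\le\|\mu_1-\mu_2\|_{\mathrm{TV}}$.
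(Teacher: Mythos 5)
Your proposal is correct and follows essentially the same route as the paper's proof: interpolate through the genuine Poisson process with intensity $\frac{a}{\alpha(V)}\mathrm{Leb}\otimes P_G^{e_{V^o[R],G}^\tau}\circ((X_t)_{t\in[0,\tau]})^{-1}$ (called $\bar\Gamma$ in the paper), invoke Proposition~\ref{prop8743} for the $\Xi$-side, compare the two Poisson intensities on the Cox side via the $\ell^1$-difference of the equilibrium-measure estimates, and conclude by the triangle inequality. Your explicit conditional triangle inequality in Step~3, followed by taking the expectation, is a slightly more careful packaging that directly produces the stated $\E[1\wedge(\cdot)]$ form rather than the looser $a+\E[1\wedge b]$ form one would get by applying the triangle inequality unconditionally; this is the same idea the paper compresses into the phrase ``the triangle inequality yields the statement''.
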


\begin{proof} 
By Proposition~\ref{prop8743} we have that
$$
d_{\mathrm{TV}}(\Xi|_{[0,\sigma]\times W_+},\bar \Gamma|_{[0,\sigma]\times W_+})\le  c_{G,B,\tau} m +2^{-m} e^{a \capa_G^\tau(V^o[R]) t/\alpha(V)},
$$
where $\bar \Gamma$ is the Poisson point process as in the latter proposition. Recalling that~$\Gamma$ is a Cox-process that, given the first $m$ trajectories $X^{(1)},\dots,X^{(m)}$, is a Poisson point process with  intensity measure
$$
\mathfrak a^{-1}\mathrm{Leb}|_{[0,\infty)}\otimes P^{ \mathfrak e_{V^o[R],G}^\tau}\circ ((X_t)_{t\in[0,\tau]})^{-1}
$$  
we conclude that 
$$
d_{\mathrm{TV}}(\bar \Gamma|_{[0,\sigma]\times W_+},\Gamma|_{[0,\sigma]\times W_+})\le  \E\Bigl[1\wedge \bigl( \sigma \bigl\|\mathfrak a^{-1} \mathfrak e_{V^o[R],G}^\tau- \sfrac a{\alpha(G)} e_{V^o[R],G}^\tau \bigr\|_1\bigr)\Bigr].
$$
The triangle inequality yields the statement of the proposition.
\end{proof}

\begin{remark}
Applying a diagonalization argument one can show that under the same assumptions as in the previous proposition there exist monotone $\N$-valued sequences $(R'_n)_{n\in\N}$ and $(\sigma_n)_{n\in\N}$ tending to infinity such that the statement of the theorem still holds when replacing $R'$  by $R'_n$ and $\sigma$ by $\sigma_n$.
\end{remark}

\begin{proof}[Proof of Theorem~\ref{prop:2746}]
To establish the wanted coupling we do various couplings in between.

First let $\Gamma$ denote a Cox-process with points in $[0,\infty)\times W_+$ that is, given $G^o$, a Poisson point process with intensity measure
$$
\frac 1{\mathfrak a} \mathrm{Leb}|_{[0,\infty)}\otimes P^{e_{V^o[R],G}}\circ ((X_t)_{t\in [0,\infty)})^{-1}.
$$
We will apply explorations as in Example~\ref{exa_MP} that depend on an index $n\in\N$. We choose an increasing $\N$-valued sequence $(m_n)_{n\in\N}$ tending to infinity with $m_n \ell_n'\le \ell_n$ and 
$$
m_n P^{o}_G (\#\mathrm{\, range}(X_t)_{t\in[0,2\tau_n]}>\ell_n') \to0, \text{ in probability}.
$$

 For $n\in\N$ we let $K_n$ be the $2\tau_n$-exploration along the Markov process and we denote by $X^{(n,1)}=(X^{(n,1)}_t)_{t\in[0,2\tau_n]},  X^{(n,2)}=( X^{(n,2)}_t)_{t\in[0,2\tau_n]},\dots$ \cadlag\ processes that are, given $G_n$, independent $P^{o}_{G}$-distributed processes (constrained on the time interval  $[0,2\tau_n]$) and we denote by ${\mathfrak E}_{\ell_n}^n$ the respective $K_n$-exploration  after $\ell_n$ steps.

We apply Construction~\ref{constr1} with parameters $m_n,R,\tau_n,\mathfrak a$ for the processes $X^{(n,1)}$, $X^{(n,2)},\dots$ and let $ {\mathfrak e}^{(n)}$ be  the estimate of the equilibrium measure $e^{\tau_n}_{V^o[R],G}$ based on the first $m_n$ trajectories $X^{(n,1)},\dots,X^{(n,m_n)}$ as introduced in the construction. Moreover, we denote by $\gamma_n$ and  $\Gamma_n$ the respective point processes.
Then given $X^{(n,1)},\dots , X^{(n,m_n)}$, $\Gamma_n$ is a Poisson point process with intensity
$$
\frac 1{\mathfrak a} \mathrm{Leb}|_{[0,\infty)}\otimes P^{\mathfrak e^{(n)}}\circ ((X_t)_{t\in [0,\tau_n]})^{-1}.
$$ 
Consequently,
\begin{align*}
d_{\mathrm{TV}}\bigl(&(G^o,\Gamma|_{[0,\sigma]\times W_+}\circ \mathrm{conf}_{[0,\tau_n]}^{-1}),(G^o,\Gamma_n|_{[0,\sigma]\times W_+})\bigr)\\
&\qquad \le \E\bigl[1\wedge \bigl(\sigma \|e_{V^o[R],G}-\mathfrak e^{(n)}\|_{\ell_1}\bigr)\bigr]
\end{align*}
We have a close look at the estimate $\mathfrak e^{(n)}$. Given $G^o$ it is the weighted sum of the estimates of the escape probabilities. Since $m_n,\tau_n\to\infty$ we can apply the law of large numbers (given $G^o$) and obtain that $\|\mathfrak e_{V^o[R],G}^{\tau_n}-e_{V^o[R],G}^{\tau_n}\|_{\ell_1}\to 0$, in probability. Moreover, by monotone convergence also $\|e_{V^o[R],G}^{\tau_n}-e_{V^o[R],G}\|_{\ell_1}\to0$, almost surely. Hence, we can couple $(G^o,\Gamma)$ and $(G^o,\Gamma_n)$ such that, with high probability, $(G^o,\Gamma|_{[0,\sigma]\times W_+}\circ \mathrm{conf}_{[0,\tau_n]}^{-1})=(G^o,\Gamma_n|_{[0,\sigma]\times W_+})$.

Next, we also apply Construction~\ref{constr1} for every $n\in\N$ onto  the graph $G^{o_n}_n$, again with parameters $m_n,R,\tau_n,\mathfrak a$: given $G_n^{o_n}$  we denote by $\bar X^{(n,1)}=(\bar X^{(n,1)}_t)_{t\in[0,2\tau_n]}$, $\bar X^{(n,2)}=(\bar X^{(n,2)}_t)_{t\in[0,2\tau_n]},\dots$ independent $P^{o_n}_{G_n}$-distributed processes and by $\bar\gamma_n$ and $\bar \Gamma_n$ the respective Cox-process defined as in the construction on the basis of the trajectories $\bar X^{(n,1)},\dots$. 
 We denote by $K_n$ the exploration rule associated to $2\tau_n$-exploration along the Markov process as introduced in Example~\ref{exa_MP} and write for the induced $\ell_n$-step $K_n$-explorations $\mathfrak E_{\ell_n}^{(n)}$ and $\bar{\mathfrak E}_{\ell_n}^{(n)}$ when applied to $G^o$ along the paths $X^{(1,n)},\dots$ and to $G^{o_n}_n$ along the paths $\bar X^{(1,n)},\dots$, respectively.  We denote by $\mathcal V_n$ and $\bar {\mathcal V}_n$ the disclosed areas under $\mathfrak E_{\ell_n}^{(n)}$ and $\bar{\mathfrak E}_{\ell_n}^{(n)}$ and note that, with high probability, 
  there exists an isomorphism $\varphi_n$ taking the $\bar{\mathcal V}_n$-disclosed graph of $G_n^{o_n}$ to the $\mathcal V_n$-disclosed graph of $G^o$. If, additionally, the $2m_n$ trajectories $X^{(n,1)},\dots, X^{(n,2m_n)}$ have combined range less than or equal to $\ell_n$, then the latter trajectories are obtained by applying $\varphi_n$ onto the paths $\bar X^{(n,1)},\dots, \bar  X^{(n,2m_n)}$. 
  This happens with high probability, 
  since 
  $$
\P(\#\mathcal V_n>\ell_n) \le  \E\bigl[\bigl(m_n P^o_G \bigl(\#\, \mathrm{range}\bigl((X_t)_{t\in[0,2\tau_n]}\bigr)> \ell_n'\bigr)\bigr)\wedge1\bigr]\to 0
$$
   In the latter case, in particular, $\varphi_n$ takes $\bar{\mathfrak e}^{(n)}$ to ${\mathfrak e}^{(n)}$ and we can couple $\gamma_n$ and $\bar \gamma_n$ in such a way that $\varphi_n$ takes $\bar \gamma_n$ to $\gamma_n$.
Note that $m_n\to\infty$ implies that the minimal  number (taken over all boundary vertices $v\in\partial V^o[R]$) of trajectories with index in $m_n+1,\dots 2m_n$ starting in a boundary vertex tends to infinity, in probability, so that with high probability $\Gamma_n|_{[0,\sigma]\times W_+}$ can be constructed from $\gamma_n|_{[0,\sigma]\times W_+}$ on the basis of the trajectories $m_n+1,\dots ,2m_n$. We thus get that, with high probability, $\varphi_n$ takes $\bar \Gamma_n|_{[0,\sigma]\times W_+}$ to $\Gamma_n|_{[0,\sigma]\times W_+}$.

It remains to couple $(G_n^{o_n},\Gamma_n)$ and $(G_n^{o_n},\Xi_n)$.
 As consequence of Proposition~\ref{prop:main}, we have that
\begin{align}\label{coup_err1}
d_{\mathrm{TV}}((G_n^{o_n},\Xi_n|_{[0,\sigma]\times W_+}),(G_n^{o_n},\bar \Gamma_n|_{[0,\sigma]\times W_+}))\le\E\bigl[1\wedge \bar \delta_n\bigr],
\end{align}
where
\begin{align*}
\bar \delta_n&= c_{G_n,V_n^{o_n}[R],\tau_n} \rho +2^{-\rho} e^{a_n \capa_{G_n}^{\tau_n}(V_n^{o_n}[R]) \sigma/\alpha_n(V)}\\
& \qquad + \sigma \bigl\|\mathfrak a^{-1} \mathfrak e_{V_n^{o_n}[R],G_n}^{\tau_n}- \sfrac {a_n}{\alpha_n(V_n)} e_{V_n^{o_n}[R],G_n}^{\tau_n} \bigr\|_{\ell_1},
\end{align*}
if 
\begin{align}\label{eqa83}-\log \kappa_{G_n,V_n^{o_n}[R],\tau_n}\ge 2\tau_n \frac{\capa_{G_n}^{\tau_n}(V_n^{o_n}[R])}{\alpha_n(V_n)}\text{   \ and \  }2\capa^0_{G_n}(V^{o_n}_n[R])\le \alpha_n(V^{o_n}_n[R]^c),
\end{align}
  and $\bar\delta_n=1$, otherwise.
It remains to verify that the right hand side of (\ref{coup_err1}) tends to zero. 
First we show that $\|\mathfrak e^{\tau_n} _{V_n^{o_n}[R],G_n}-e_{V_n^{o_n}[R],G_n}\|_{\ell_1}$ tends to zero, in probability. Note that with high probability the exploration used in the coupling of $\bar \Gamma_n$ and $\Gamma_n$ unveils the $R'$-neighbourhood of the root so that, with high probability, $\varphi_n$ takes
the $R'$ neighbourhood of $G^{o_n}_n$ to the one of $G^o$.
Moreover, since $m_n\to\infty$  the minimal number of trajectories on one of the stacks of first kind tends to infinity, in probability. Since given the stack sizes the estimates for the equilibrium measure is unbiased as long as the stack is nonempty we get that
$$
\|\mathfrak e^{\tau_n} _{V_n^{o_n}[R],G_n}-e_{V_n^{o_n}[R],G_n}\|_{\ell_1}\to 0, \text{ \ in probability}.
$$
Since $\alpha_n(V_n)/a_n\to \mathfrak a>0$, in probability, we also get that
\begin{align}\label{eq82472}
\Bigl\|\mathfrak a^{-1}\mathfrak e^{\tau_n} _{V_n^{o_n}[R],G_n}-\frac {a_n}{\alpha_n(V_n)}e_{V_n^{o_n}[R],G_n}\Bigr\|_{\ell_1}\to 0, \text{ \ in probability.}
\end{align}
Moreover, we have, in probability,
\begin{align*}
\pi_n(V^{o_n}_n[R])& = \frac {\alpha_n(V^{o_n}_n[R])}{\alpha_n(V_n)}\to 0 \text{ \ \ and}\\
 \kappa_{G_n,V^{o_n}_n[R],\tau_n} & = 4\frac {\mathrm{cap}_{G_n}^0(V^{o_n}_n[R])}{\mathrm{cap}_{G_n}^0(V^{o_n}_n[R])}\Bigl(d_{G_n}(\tau_n)+\tau_n \frac{ \capa_{G_n}^{\tau_n}(V^{o_n}_n[R])}{\alpha_n(V_n)}\Bigr)\to0,
\end{align*}
where we used that $$\alpha_n(V^{o_n}_n[R]),\ \1_{\{\mathrm{cap}_{G_n}^0(V^{o_n}_n[R])\not=0\}}\frac {\mathrm{cap}_{G_n}^0(V^{o_n}_n[R])}{\mathrm{cap}_{G_n}^0(V^{o_n}_n[R])} \text{ \ and \ } \capa_{G_n}^{\tau_n}(V^{o_n}_n[R])
$$ are tight (as consequence of local convergence) and $\alpha_n(V_n)\to \infty$, $\tau_n/\alpha_n(V_n)\to 0$ and $d_{G_n}(\tau_n)\to 0$, in probability, by assumption. 
Consequently, $c_{G_n,V_n^{o_n}[R],\tau_n}\to0$, in probability.
We note that~(\ref{eqa83}) is satisfied with high probability, since $\kappa_{G_n,V^{o_n}_n[R],\tau_n}\to0$, $\tau_n /\alpha_n(V_n)\to0$ and  $\alpha_n(V_n^{o_n}[R]^c)\to \infty$, in probability.
Hence, we can choose $\rho$ in dependence on $n$ such that $\rho_n\,c_{G_n,V_n^{o_n}[R],\tau_n}\to0$, in probability, and obtain that $\bar \delta_n\to 0$, in probability, by tightness of $(e^{a_n\capa_{G_n}^{\tau_n}(V_n^{o_n}[R])\sigma/\alpha_n(V)}:n\in\N)$ and convergence~(\ref{eq82472}).

Consequently, we can also couple  $(G_n^{o_n},\bar\Gamma_n)$ and $(G_n^{o_n},\Xi_n)$ such that, with high probability, $\bar\Gamma_n|_{[0,\sigma]\times W_+}=\Xi_n|_{[0,\sigma]\times W_+}$. As observed above, with high probability, $\mathcal V_n$ and $\bar{\mathcal V}_n$ contain the respective $R'$-neighbourhoods and applying the isomorphism $\varphi_n$ between the $\bar{\mathcal V}_n$- and $\mathcal V_n$-disclosed graphs of $G^{o_n}_n$ and $G^o$ onto the labels in $\Xi_n|_{[0,\sigma]\times W_+}$ yields $\Gamma|_{[0,\sigma]\times W_+}\circ \mathrm{conf}_{[0,\tau_n]}^{-1}$.
\end{proof}

\begin{proof}We choose an $\N$-valued sequence $(m_n)_{n\in\N}$ tending to infinity such that  $m_n \ell_n'\le \ell_n$ for large $n$ and
$$
 \lim_{n\to\infty} m_n \,P_G^o(\# \text{ disc.\ of }(X_t)_{[0,2\tau_n]} >\ell_n')   =0,\text{ in probability.}
$$

We define an $n$-dependent exploration rule $K_n$ as follows. Starting from the root we explore the graph along the Markov process on chain  for $\ell_n'$ steps  (meaning that the exploration follows the embedded Markov chain). 
Once the first path, say $(Y_1^{(n)}(k))_{k=0,\dots,\ell_n'}$, has been generated, we generate another path, say $(Y^{(n)}_2(k))_{k=0,\dots,\ell_n'}$, in the same manner starting again from the root vertex. We continue and generate further independent  paths $(Y^{(n)}_3(k))_{k=0,\dots,\ell_n'}$, $(Y^{(n)}_4(k))_{k=0,\dots,\ell_n'}$, \dots. Obviously, this defines an exploration rule and the related picked edges are the ones that induce the respective transitions of the embedded Markov chain.

Since $m_n\ell_n'\le \ell_n$ for large $n$ we can couple the rooted graphs and their explorations in such a way that,  with high probability, the $K_n$-exploration $\mathfrak E_{\ell_n}^{(n)}$ of $G^o$ of length $\ell_n$  is equal to $\varphi_{n}(\bar {\mathfrak E}_{\ell_n}^{(n)})$, where $\bar{ \mathfrak E}_{\ell_n}^{(n)}$ is a $K_n$-exploration of $G_n^{o_n}$ of length $\ell_n$ and $\varphi_n$ is an appropriate random graph isomorphism between the explorations. 
 In the  case  that the latter identity holds we say that the $n$-th coupling succeeds. We write $(Y_k^{(n,1)})_{k=0,\dots,\ell_n'}$, $(Y^{(n,2)}_k)_{k=0,\dots,\ell_n'}$,\dots for the chains of vertices visited by the $n$-th exploration of $G^{o}$ and $(\bar Y^{(n,1)}_k)_{k=0,\dots,\ell_n'}$, $(\bar Y_k^{(n,2)})_{k=0,\dots,\ell_n'}$,\dots for the chains of vertices visited by the $n$-th exploration of $G_n^{o_n}$. With high probability,
 the explorations $\mathfrak E_{\ell_n}^{(n)}$, resp.\ $\bar {\mathfrak E}_{\ell_n}^{(n)}$, disclose the $R'+1$ neighbourhood of $o$ in $G$, resp.\ of $o_n$ in $G_n$  since $m_n,\ell_n'\to \infty$.
 
 Next, we relate the embedded chains $Y^{(n,j)}$ and $\bar Y^{(n,j)}$ to continuous time paths: on an appropriately rich probability space we can assume existence of  independent standard exponentials $(\rho_{j,k})_{j,k\in\N}$ (independent of  all the random objects considered so far) and we let for $n,j\in\N$ and $k=1,\dots, \ell_n'+1$,
 $$
\tau_0^{(n,j)}=0\text{ \ and \ }  \tau_k^{(n,j)}=\tau_{k-1}^{(n,j)}+ \alpha_G(Y^{(n,j)}_{k-1})^{-1}  \rho_{j,k}
 $$
 and set for $t\in [ \tau_{k-1}^{(n,j)}(k-1), \tau_k^{(n,j)})$
 $$
 \cX^{(n,j)}_t= Y_{k-1}^{(n,j)}
 $$
 and for completeness $\cX^{(n,j)}_t=\partial$ if $t\ge  \tau^{(n,j)}_{\ell_n'+1}$.
 Analogously, we define $(\bar \cX^{(n,j)}_t)_{t\ge0}$ by replacing in the above definitions $\alpha_G$ by $\alpha_{G_n}$ and $Y_j^{(n)}$ by $\bar Y_j^{(n)}$.
 Note that in the case that the coupling succeeds one recovers the paths $\cX^{(n,1)},\dots, \cX^{(n,1)}$ from $\bar\cX^{(n,1)},\dots, \bar \cX^{(n,1)}$ by applying the isomorphism $\varphi_n$.

We construct an estimate for $P_G^z(T_{V^o[R]}=\infty)$.
Although the following definitions all  depend on the index $n$ of the exploration  of $G^o$ we omit this in the notation. 
For $v\in V$ we call  $j\in\{1, \dots,m_n\}$ a \emph{$v$-indicator}, if 
$(\cX^{(n,j)}_t)_{t\in[0,2\tau_n]}$ hits~$v$ before time $\tau_n$ and we call it a \emph{$v$-escape} if, additionally, the latter path does not return to $V^o[R]$ for $\tau_n$ time units after its first entrance  into~$v$.
We let for $v\in V$, $\mathfrak e^{(n)}_v$ denote the relative number of $v$-escapes, i.e.,
$$
 \mathfrak e^{(n)}_v=\begin{cases}\frac {\# \{v\text{-escapes}\} }{\#\{ v\text{-indicators}\}}, &\text{ if } \{v\text{-indicators}\}\not=\emptyset,\\ 0, &\text{ else.}\end{cases}
$$

Next, we show that typically for all $v\in \partial V^o[R+1]$, $\mathfrak e_v^{(n)}$ is for large $n$ close to $P^v_G((X_t)\text{ does not hit }V ^o[R])$.
By construction, one has, conditionally on $G^o$, that the chains of the $K_n$ explorations are independent  and the respective continuous time processes $(\cX_t^{(n,1)})_{t\in[0,2\tau ]},\dots$ behave like $P^{o}_G$-Markov processes with the modification that the $\ell_n'+1$-th jump leads to the cemetery state $\partial$. On an appropriate probability space, one can define \cadlag\ processes $(X^{(n,1)}_t)_{t\ge0 } ,  (X^{(n,2)}_{t})_{t\in[0,\infty)},\dots$ so that given $G^o$ the processes are independent $P_G^o$-Markov processes that satisfies for every $t\ge0$, 
$X_t^{(n,j)}=\cX^{(n,j)}_t$ iff $X^{(n,j)}$ has less than or equal to $\ell'$ discontinuities on $[0,t]$. Hence,
\begin{align*}
\P(&(\cX^{(n,j)}_t)_{t\in[0,2\tau_n]}=(X^{(n,j)}_t)_{t\in[0,2\tau_n]}\text{ for } j=1,\dots,m_n)\\
&\ge 1-  \E\bigl[1\wedge  (\underbrace {m_n P^o_G((X_t)_{t\in[0,\tau_n]}\text{ has more than $\ell'$ discontinuities})}_{\to 0, \text{ in probability}})\bigr]\to 1.
\end{align*}

 
We call $j\in\{1,\dots,m_n\}$ an \emph{idealised}  $v$-indicator or $v$-escape, if we replace in the above definition the paths $(\cX_{t}^{(n,1)})_{t\in[0,2\tau_n]}, \dots, (\cX_{t}^{(n,m_n)})_{t\in[0,2\tau_n]}$ by the ``idealised'' paths $(X_{t}^{(n,1)})_{t\in[0,2\tau_n]}, \dots, (X_t^{(n,m_n)})_{t\in[0,2\tau_n]}$. We denote by $e^{(n)}_v$ the respective relative number of $v$-escapes when replacing the true exploration paths by the idealised ones. Note that, with high probability, the original and the idealised indicators/escapes coincide and $\mathfrak e_v^{(n)}$ agrees with $e_v^{(n)}$ for all $v$.

Obviously,  given $G^o$, each $j\in\{1,\dots,m_n\}$ is an idealised $v$-indicator independently of each other with probability 
$$P_G^o\bigl((X_t)_{t\in[0,\tau_n]} \text{ hits  }v\bigr)\to P_G^o\bigl((X_t)_{t\ge0}\text{ hits } v\bigr)>0$$
as $n\to\infty$.
Moreover, conditionally that a $j$ is a $v$-indicators it is additionally a $v$-escape independently with probability 
$$ P^v_G\bigl((X_t)_{t\in[0,\tau_n]}\text{ does not hit }V^o[R]\bigr)\to P^v_G\bigl((X_t)_{t\ge0}\text{ does not hit }V^o[R]\bigr).$$
Consequently, since $\partial V^o[R+1]$ is finite we get with the law of large numbers that 
$$
\sum_{v\in \partial V^{o}[R+1]} \bigl|e^{(n)}_v -P^v_G((X_t)\text{ does not hit }V^o[R])\bigr|\to 0, \text{ in probability.}
$$
and hence also
$$
\sum_{v\in \partial V^{o}[R+1]} \bigl|\mathfrak e^{(n)}_v -P^v_G((X_t)\text{ does not hit }V^o[R])\bigr|\to 0, \text{ in probability.}
$$ 

Next, we continue with a similar analysis for the paths obtained from the $K_n$-exploration of $G_n^{o_n}$. As before we introduce for every $n\in\N$,  \cadlag\ processes $(\bar X_t^{(n,1)})_{t\ge0}, (\bar X_t^{(n,2)})_{t\ge0},\dots$ such that given $G_n^{o_n}$ all processes are independent $P_{G_n}^{o_n}$-distributed and satisfy for $j\in\N$ and $t\ge0$, $\bar X_t^{(n,j)}=\bar \cX_t^{(n,j)}$ iff $\bar \cX^{(n,j)}\not=\partial$ (which is equivalent to $\bar X^{(n,j)}$ having  less than or equal to $\ell'_n$ discontinuities on $[0,t]$).
 In complete analogy we define the concept of $v$-indicators and $v$-escapes and their idealised versions for the explorations of $G_n^{o_n}$. We denote the relative number of escapes for $v$ by $\bar {\mathfrak e}_v^{(n)}$ and for their idealised versions by $\bar { e}_v^{(n)}$. If the $n$-th coupling succeeds and if the $R'$-neighbourhoods are disclosed by the exporations we have that $j$ is an $v$-indicator in the $G_n^{o_n}$-exploration if this is the case for $\varphi_n(v)$ in the $G^o$-exploration. Therefore, we have with high probability that
 $$
 \min_{v\in \partial V^{o_n}_n[R+1]} \# \{\text{idealised $v$-indicators in $G_n^{o_n}$-exploration}\} \to \infty,\text{ in probability.}
 $$
Moreover, note that given $G_n^{o_n}$ and that  $j$ is an idealised $v$-indicator it is also an idealised  $v$-escape with conditional probability
$$
P_{G_n}^{v} ((X_t)_{t\in[0,\tau_n]}\text{ does not hit $V^{o_n}[R]$}).
$$
Consequently,  by the law of large numbers as $n\to\infty$ 
$$
\sum_{v\in\partial V_n^{o_n}[R+1]}\bigl|\bar e_v^{(n)}- P_{G_n}^{v} ((X_t)_{t\in[0,\tau_n]}\text{ does not hit $V^{o_n}[R]$})\bigr|\to 0, \text{ in probability}.
$$
We recall that, with high probability, for all $v\in \partial V_n^{o_n}[R+1]$, $\bar e_v^{(n)}=\bar{\mathfrak e}_v^{(n)}=\mathfrak e_{\varphi_n(v)}^{(n)}$, so that, in probability,
\begin{align*}
\lim_{n\to\infty} \sum_{v\in\partial V_n^{o_n}[R+1]}\bigl| P_{G_n}^{v} &((X_t)_{t\in[0,\tau_n]}\text{ does not hit $V^{o_n}[R]$})\\
&- P_G^{\varphi_n(v)} ((X_t)_{t\ge0} \text{ does not hit } V^o[R] )\bigr|= 0.
\end{align*}
This entails the result.
\end{proof}

\section{Vacant set percolation}

Vacant set percolation has been considered on random graphs in various articles  \cite{CernyTeixeiraWindisch2010, CernyTeixeira2011,  CerHay20}. In this section, we relate the visiting measure to vacant set percolation similarly as done in \cite{Hofstad21} for the existence of giant components. 
\smallskip

We will  apply exploration convergence for doubly rooted graphs, i.e., graphs with  two designated vertices  $o$ and $o'$. That means that an exploration rule $K$ now acts on $\bar \IG^{\mathrm{finite,root}^2}$ being the set of finite, doubly-rooted, weighted, generalised graphs. Graph isomorphisms and exploration convergence is defined  in complete analogy, where the graph limit now is a random doubly-rooted graph $G^{o,o'}$, where each vertex is connected to at least one of the two roots.

We let for a doubly rooted graph $G^{o,o'}\in\IG^{\mathrm{root}^2}$, $V^{o,o'}[R]$ denote the set of vertices of $G$ that have distance less or equal to $R$ to one of the roots. Theorem~\ref{prop:2746} holds analogously for doubly rooted graphs.
\medskip

\begin{theorem}\label{thm_4_1}
Let $(\ell_n')_{n\in\N}$ and $(\ell_n)_{n\in\N}$ be $\N$-valued and $(\tau_n)_{n\in\N}$ and $(a_n)_{n\in\N}$ be $(0,\infty)$-valued with all three sequences tending to infinity and suppose that the random finite connected  $\IG^{\mathrm{root}^2}$-graphs  $G_1^{o_1,o_1'},\dots$  converge $(\ell_n)$-locally to the random $\IG^{\mathrm{root}^2}$-graph $G^{o,o'}$. Moreover, suppose that $\ell_n'=o(\ell_n)$, $\tau_n=o(\alpha_n(V_n))$, 
$$
\lim_{n\to\infty} d_{G_n}(\tau_n)=0, \ \ \lim_{n\to\infty}\max _{\bar o\in \{o,o'\}} P_G^{\bar o}\bigl(\# \text{ range}\bigl((X_t)_{[0,2\tau_n]}\bigr) >\ell_n'\bigr)   =0\text{ \ and }
$$
$$
\lim_{n\to\infty}\frac{\alpha_n(V_n)}{a_n}=\mathfrak a ,\text{ in probability.}
$$
Let $R,R'\in\N$ and $\sigma>0$. For $n\in\N$, let $\Xi_n$ be the $(V_n^{o_n,o_n'}[R],\tau_n,a_n)$-visiting measure and let $\Gamma$ be the Cox-process that is conditionally on $G^o$ a Poisson point process with intensity
\begin{align}\label{eq94624}
\sfrac 1{\mathfrak a} \mathrm{Leb}|_{[0,\infty)} \otimes P^{e_{V^{o,o'}[R],G}}_G
\end{align}
and let $\Gamma_n$ be the point process obtained from $\Gamma$ by killing all paths at time $\tau_n$.
 One can couple $(G^{o_n,o_n'}_n,\Xi_n)$ and $(G^{o,o'},\Gamma)$ such that, the following holds, with high probability:
\begin{itemize}
\item There are subsets $\bar{\mathcal V}_{n}\supset V_n^{o_n,o_n'}[R']$ and ${\mathcal V}_{n}\supset V^{o,o'}[R']$ of $V_n$ and $V$, respectively, and a graph isomorphism taking the $\bar {\mathcal V}_n$-disclosed graph of $G_n^{o_n,o_n'}$ to the $\mathcal V_n$-disclosed graph of $G^{o,o'}$.
\item The trajectories appearing in $\Xi_n|_{[0,\sigma]\times W_+}$ do not leave $\bar {\mathcal V}_n$ and the point process $\Gamma_n|_{[0,\sigma]\times W_+}$ is obtained from  $\Xi_n|_{[0,\sigma]\times W_+}$ by applying the isomorphism  $\varphi_n$ on the states of the trajectories. 
\end{itemize}
 \end{theorem}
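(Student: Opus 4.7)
The proof essentially duplicates the argument of Theorem~\ref{prop:2746} with the single root $o$ replaced by the pair $(o,o')$ and the $R$-neighborhood $V^o[R]$ replaced by $V^{o,o'}[R]$. My plan is therefore to track what needs to be changed and to verify that no estimate loses its effectiveness. First I would extend the concept of exploration convergence to $\IG^{\mathrm{root}^2}$ by declaring the initial exploration datum to be the $\{o,o'\}$-disclosed subgraph and by generalising the $2\tau_n$-exploration along the Markov process (Example~\ref{exa_MP}) so that it runs independent $P^{\bar o}_G$-processes for $\bar o\in\{o,o'\}$ in an interleaved fashion (say, odd-indexed paths start at $o$ and even-indexed paths start at $o'$). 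The hypothesis $\max_{\bar o\in\{o,o'\}}P^{\bar o}_G(\#\text{range}((X_t)_{[0,2\tau_n]})>\ell_n')\to 0$ ensures that the analogue of Proposition~\ref{prop:631} still couples the full exploration on $G^{o,o'}$ and $G_n^{o_n,o_n'}$ with high probability.

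Next, I would modify Construction~\ref{constr1} accordingly. Now the stacks of the first kind are indexed by vertices in $\partial V^{o,o'}[R+1]$ and are filled from the first $m_n$ paths (half from each root), while the stacks of the second kind are indexed by $\partial V^{o,o'}[R]$ and are filled from the subsequent paths. Exactly as in the original construction this yields an estimate $\mathfrak e^{\tau_n}_{V^{o,o'}[R],G}$ of the equilibrium measure $e_{V^{o,o'}[R],G}$ together with a Cox process $\Gamma_n$ whose conditional intensity given the first $m_n$ paths equals $\mathfrak a^{-1}\,\mathrm{Leb}|_{[0,\infty)}\otimes P^{\mathfrak e^{\tau_n}_{V^{o,o'}[R],G}}_G\circ((X_t)_{t\in[0,\tau_n]})^{-1}$.

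Crucially, all the Markov-process estimates of Section~2 (Lemma~\ref{le:tec_est}, Proposition~\ref{prop:Aldous}, Proposition~\ref{prop_1241}, Proposition~\ref{prop8743}) are properties of a target set $B\subset V$ and are entirely insensitive to the root structure. Thus Proposition~\ref{prop:main} carries over verbatim with $V_n^{o_n}[R]$ replaced by $V_n^{o_n,o_n'}[R]$. The proof can then be closed by running the same four-step chain of couplings as in the proof of Theorem~\ref{prop:2746}: couple $(G^{o,o'},\Gamma)$ with $(G^{o,o'},\Gamma_n)$ using the law of large numbers for $\mathfrak e^{\tau_n}_{V^{o,o'}[R],G}\to e_{V^{o,o'}[R],G}$; transport this to $(G_n^{o_n,o_n'},\bar\Gamma_n)$ via the isomorphism $\varphi_n$ coming from $(\ell_n)$-local convergence for doubly-rooted graphs; and finally couple $\bar\Gamma_n$ with the visiting measure $\Xi_n$ via Proposition~\ref{prop:main}.

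The main obstacle I anticipate is the verification that the error terms entering Proposition~\ref{prop:main} still vanish in probability. The critical inputs, $\pi_n(V_n^{o_n,o_n'}[R])\to 0$, $\kappa_{G_n,V_n^{o_n,o_n'}[R],\tau_n}\to 0$, and tightness of $\capa_{G_n}^{\tau_n}(V_n^{o_n,o_n'}[R])$ and of $\#V_n^{o_n,o_n'}[R]$, require that the combined $R$-ball around the two roots remains bounded in probability. This is exactly what $(\ell_n)$-local convergence towards a doubly-rooted graph $G^{o,o'}$ in which every vertex is at finite distance to $\{o,o'\}$ delivers, so that the vanishing of $\alpha_n(V_n)^{-1}\,\capa_{G_n}^{\tau_n}(V_n^{o_n,o_n'}[R])$, together with $d_{G_n}(\tau_n)\to 0$ and $\tau_n/\alpha_n(V_n)\to 0$, yields $\kappa\to 0$ in probability. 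Once this is established, the selection of $m_n$ (and of $\rho=\rho_n$) is done exactly as in the original proof by a diagonalisation argument, and the conclusion follows.
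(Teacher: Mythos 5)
Your proposal is correct and follows essentially the same route as the paper, which disposes of the theorem with the single remark that one runs the same proof using path explorations from the two roots with alternating starting points. Your more detailed unpacking (interleaved starting points, stacks indexed by $\partial V^{o,o'}[R+1]$ and $\partial V^{o,o'}[R]$, root-insensitivity of the Section~2 estimates, and the same four-step coupling chain) is an accurate account of what that remark entails.
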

 
 \begin{proof}
 Literally the same proof where one now uses path explorations from the two roots with alternating starting points.
 \end{proof}

Let us now introduce the setting of vacant set percolation. In the following, we consider site- and bond-percolation. Both can be treated in complete analogy. Let $(G_n)$ be a sequence of  random finite, connected $\IG$-graphs  and let $(a_n)$ be a sequence of  strictly positive numbers that is used to govern the time scale (as in the definition of the visiting counting measure). For $\sigma\ge 0$ we denote by $G_n(\sigma)$ the graph that is obtained when removing the sites, resp.\ bonds seen by the Markov chain until time $\sigma a_n$. For a vertex $v\in V_n$ we let $\cC^v_n(\sigma)$ the component of $v$ in $G_n(\sigma)$.

Again we assume that the uniformly rooted graphs converge (in the sense of exploration convergence)  to some $\IG$-graph $G^o$. We denote by $G^o(\sigma)$ the graph that is obtained by applying interlacement percolation (site or bond percolation). More explicitly, we use Remark~\ref{rem:235462} and consider a doubly stochastic Poisson process that is, given the rooted graph $G^o$, a Poisson point process with intensity measure $\mathrm{Leb}|_{[0,\infty)}\otimes \nu_G$. Removing all sites, resp.\ edges, seen by the path with time index less than or equal to $\sigma$ we obtain $G(\sigma)$. 
We relate the relative size of the largest component in $\cC_n(\sigma)$ with the probability of $o$ being in an infinite component of $G(\sigma)$. 
\medskip

\begin{theorem}\label{thm_4_2}Let $\sigma>0$ and let $G_1,G_2,\dots$ random finite, connected $\IG$-graphs. 
Suppose that the assumptions of the latter theorem are satisfied when choosing independent uniform roots $o_n$ and $o_n'$ from the graph $G_n$ and that the graph limit is in distribution equal to  two independent unconnected copies of $G^o$ (with $G^o$ itself being the graph limit of $G^{o_n}_n$). Then the following is equivalent for every $\sigma\ge0$:
\begin{enumerate}\item One has 
$$
\lim_{k\to\infty} \limsup_{n\to\infty} \P\bigl(\#\cC_n^{o_n}(\sigma)\ge k,\#\cC_n^{o_n'}(\sigma)\ge k, o_n \not\in \cC_n^{o_n'}(\sigma)\bigr)=0.
$$
\item One has, in probability, that
$$
\lim_{n\to\infty} \frac 1{\#V_n} \#\cC_n^\mathrm{max}(\sigma)= \P(\#\cC^o(\sigma)=\infty).
$$
\end{enumerate}
\end{theorem}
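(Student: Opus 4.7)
The plan is to reduce the equivalence to a simple algebraic identity on the components of $G_n(\sigma)$ of size at least $k$, with local convergence of the percolated graph providing the necessary first- and second-moment control.

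First I would upgrade the coupling of visiting measures provided by Theorem~\ref{thm_4_1} to local convergence of the percolated doubly-rooted graph. For any prescribed $R$ and $R'\ge R$, the coupling of Theorem~\ref{thm_4_1} places all trajectories of $\Xi_n|_{[0,\sigma]\times W_+}$ inside $\bar{\mathcal V}_n\supset V_n^{o_n,o_n'}[R']$ and identifies them, via a graph isomorphism $\varphi_n$, with the corresponding trajectories of $\Gamma|_{[0,\sigma]\times W_+}$ on the limit side. Consequently the set of sites (or bonds) in $V_n^{o_n,o_n'}[R]$ erased up to time $\sigma a_n$ is transferred through $\varphi_n$ from the analogous set on the limit, so $G_n^{o_n,o_n'}(\sigma)$ converges $(\ell_n)$-locally to the doubly-rooted percolated limit, which by hypothesis splits as two independent disconnected copies of $G^o(\sigma)$. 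Writing $q:=\P(\#\cC^o(\sigma)=\infty)$ and $A_k^{(n)}:=\{v\in V_n:\#\cC_n^v(\sigma)\ge k\}$, the locally determined events $\{\#\cC_n^v(\sigma)\ge k\}$ then yield
\begin{align*}
\lim_{k\to\infty}\lim_{n\to\infty}\E[\#A_k^{(n)}/\#V_n]=q,\qquad \lim_{k\to\infty}\lim_{n\to\infty}\E[(\#A_k^{(n)}/\#V_n)^2]=q^2,
\end{align*}
so that $\#A_k^{(n)}/\#V_n\to q$ in $L^2$ along an appropriate diagonal in $(n,k)$.

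Next I would use the identity
\begin{align*}
\P(\#\cC_n^{o_n}(\sigma)\ge k,\#\cC_n^{o_n'}(\sigma)\ge k,o_n\not\in\cC_n^{o_n'}(\sigma))=\E\Bigl[\Bigl(\tfrac{\#A_k^{(n)}}{\#V_n}\Bigr)^2-\sum_{\cC:\#\cC\ge k}\Bigl(\tfrac{\#\cC}{\#V_n}\Bigr)^2\Bigr],
\end{align*}
obtained by averaging over the independent uniform roots. Setting $a_n:=\#\cC_n^{\max}(\sigma)/\#V_n$ and $b_n:=\#A_k^{(n)}/\#V_n-a_n$, and noting that $\#\cC_n^{\max}(\sigma)\ge k$ (equivalently $\cC_n^{\max}\subset A_k^{(n)}$, giving $b_n\ge 0$) with high probability when $q>0$, the elementary bound $\sum_{\cC\neq\cC_n^{\max},\#\cC\ge k}(\#\cC/\#V_n)^2\le a_nb_n$ produces the two-sided estimate
\begin{align*}
\E[a_nb_n+b_n^2]\le \P(\#\cC_n^{o_n}(\sigma)\ge k,\#\cC_n^{o_n'}(\sigma)\ge k,o_n\not\in\cC_n^{o_n'}(\sigma))\le \E[2a_nb_n+b_n^2].
\end{align*}
Under (a) the middle term vanishes in the double limit, forcing $\E[b_n^2]\to 0$; combined with $a_n+b_n\to q$ in $L^2$ this delivers $a_n\to q$ in $L^2$, which is (b). The case $q=0$ is trivial, since then $\P(\#\cC_n^{o_n}(\sigma)\ge k)\to 0$ as $k\to\infty$.

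Conversely, assuming (b), bounded convergence gives $\E[a_n]\to q$, while $\E[\#A_k^{(n)}/\#V_n]\to \P(\#\cC^o(\sigma)\ge k)$, so $\E[b_n]\to \P(\#\cC^o(\sigma)\ge k)-q\to 0$ as $k\to\infty$. Since $0\le b_n\le 1$, both $\E[b_n^2]$ and $\E[a_nb_n]$ are bounded by $\E[b_n]$, whence the upper bound above vanishes in the double limit, which is (a). I expect the main obstacle to be the first step: converting the trajectory coupling of Theorem~\ref{thm_4_1} into a genuine local-convergence statement for the percolated graph. This hinges on the fact that the coupled trajectories on the observed window $[0,\sigma]$ stay inside the disclosed neighbourhood $\bar{\mathcal V}_n$, which is precisely what lets one equate the two percolation configurations within a prescribed ball around each root.
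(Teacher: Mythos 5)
Your proposal is correct and arrives at the equivalence by a genuinely different (and arguably cleaner) route than the paper. Both proofs start from the same two inputs: the $L^2$-convergence of $N_n^{(k)}:=\#A_k^{(n)}/\#V_n$ to $\P(\#\cC^o(\sigma)\ge k)$ supplied by Theorem~\ref{thm_4_1}, and the observation that the probability in (a) measures the chance of the two uniform roots landing in distinct large components. From there, however, the paper first proves by induction an auxiliary lemma sandwiching $\max(a_1,\dots,a_m)^2$ between $(\sum a_i)^2 - 3\sum_{i<j}a_ia_j$ and $(\sum a_i)^2 - 2\sum_{i<j}a_ia_j$, and then runs two separate arguments --- a direct lower/upper bound for $\frac1{\#V_n}\#\cC_n^\mathrm{max}(\sigma)$ for (a) $\Rightarrow$ (b), and a proof by contradiction using the other side of the lemma for (b) $\Rightarrow$ (a). You instead write the target probability as the exact expectation
$\E\bigl[(a_n+b_n)^2-\sum_{\#\cC\ge k}(\#\cC/\#V_n)^2\bigr]$, isolate $b_n$ (the mass in large non-maximal components), bound the subtracted sum between $a_n^2$ and $a_n^2+a_nb_n$, and obtain the two-sided bound $\E[a_nb_n+b_n^2]\le\text{target}\le\E[2a_nb_n+b_n^2]$ in one stroke, after which both implications reduce to routine $L^2$/$L^1$ arguments with a diagonal in $(n,k)$. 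This unifies the two directions, avoids the max-sum lemma entirely, and makes the role of the quantity $b_n$ transparent. The only place to tighten is the definition of $b_n$: on $\{\cC_n^\mathrm{max}(\sigma)<k\}$ one has $\#A_k^{(n)}=0$ so $b_n<0$; since the integrand vanishes there anyway, the bounds should carry the indicator $\1_{\{\cC_n^\mathrm{max}(\sigma)\ge k\}}$ (or one should observe that both sides are zero on the complement), and the $q=0$ case for (b) $\Rightarrow$ (a) should be spelt out via $\text{target}\le\P(\#\cC_n^{o_n}(\sigma)\ge k)\to\P(\#\cC^o(\sigma)\ge k)\to 0$. These are minor and easily repaired.
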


\smallskip

\begin{lemma}
For $n\in\N$ and nonnegative reals $a_1,\dots,a_n$ one has
$$ 
(a_1+\ldots+a_n)^2-3\sum_{i<j} a_i a_j\le \max(a_1,\dots,a_n)^2\le(a_1+\ldots+a_n)^2-2\sum_{i<j} a_i a_j .
$$
\end{lemma}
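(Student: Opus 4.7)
The plan is to rewrite everything in terms of $\sum_i a_i^2$ using the identity
\begin{equation*}
(a_1+\dots+a_n)^2 = \sum_{i=1}^n a_i^2 + 2\sum_{i<j} a_i a_j.
\end{equation*}
With $S := a_1+\dots+a_n$ and $M := \max_i a_i$, the upper bound becomes $M^2 \le \sum_i a_i^2$, which is immediate since $M^2$ equals one of the summands and the others are nonnegative. So the upper bound is essentially free once the identity is applied.

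For the lower bound, substituting the same identity reduces the claim to
\begin{equation*}
\sum_i a_i^2 - M^2 \le \sum_{i<j} a_i a_j.
\end{equation*}
Assume without loss of generality that $a_1 = M$; then the left side equals $\sum_{i\ge 2} a_i^2$. On the right side, isolate the terms involving $a_1$: we have
\begin{equation*}
\sum_{i<j} a_i a_j = a_1 \sum_{i\ge 2} a_i + \sum_{2 \le i<j} a_i a_j.
\end{equation*}
Since $a_1 \ge a_i$ for every $i\ge 2$, each term satisfies $a_1 a_i \ge a_i^2$, so $a_1 \sum_{i\ge 2} a_i \ge \sum_{i\ge 2} a_i^2$. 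The remaining double sum is nonnegative, and the lower bound follows.

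There is no real obstacle here; the only potentially delicate step is noticing that the factor $3$ in front of $\sum_{i<j} a_i a_j$ is chosen precisely so that after combining with the expansion of $S^2$ one obtains the quantity $\sum_i a_i^2 - \sum_{i<j} a_i a_j$, which is exactly comparable to $M^2$ under the WLOG ordering.
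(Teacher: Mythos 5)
Your proof is correct. The upper bound is established exactly as in the paper: expand $(a_1+\dots+a_n)^2 = \sum_i a_i^2 + 2\sum_{i<j}a_ia_j$ and observe $M^2 \le \sum_i a_i^2$. For the lower bound, however, you take a different route. The paper proves
\begin{equation*}
(a_1+\dots+a_n)^2 - 3\sum_{i<j}a_ia_j \le M^2
\end{equation*}
by induction on $n$: after arranging the new entry $a_{n+1}$ to be dominated by one of $a_1,\dots,a_n$, the increment on the left is $a_{n+1}^2 - (a_1+\dots+a_n)a_{n+1}\le 0$. You instead rewrite the claim directly as $\sum_i a_i^2 - M^2 \le \sum_{i<j}a_ia_j$, set $a_1 = M$, and absorb $\sum_{i\ge 2}a_i^2$ into the terms $a_1 a_i$ of the right-hand side via $a_1 a_i \ge a_i^2$. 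Both arguments hinge on the same elementary fact (the max dominates each $a_i$, allowing one to cancel the ``extra'' squares), but your version is a single one-line bound rather than an induction, which makes the mechanism slightly more transparent and avoids the bookkeeping of where the maximum sits at each inductive step. Either proof is perfectly acceptable; yours is marginally shorter.
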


\begin{proof}1) We start with proving the first inequality. The proof is established by induction over $n\in\N$. Obviously the statement is true for $n=1$. 
Now suppose that the statement is true for $n\in\N$. We let $a_1,\dots,a_{n+1}$ with the maximum being attained in the first $n$ entries (otherwise we rearrange the summands). Then
\begin{align*}
&(a_1+\ldots+a_n+a_{n+1})^2-3\sum_{1\le i<j\le n+1}a_i a_j \\
&\quad =\underbrace{(a_1+\dots+a_n)^2 - 3 \sum_{1\le i<j\le n}a_i a_j}_{\le \max(a_1,\dots,a_n)^2}  +\underbrace{ a_{n+1}^2- (a_1+\ldots+a_n) a_{n+1}}_{\le 0}\\
&\quad \le \max(a_1,\dots,a_{n+1})^2.
\end{align*}
2)  It remains to prove the second inequality. One has
$$
(a_1+\ldots+a_n)^2= \sum_{i=1}^n a_i^2+2\sum_{i<j} a_i a_j\ge \max(a_1,\dots,a_n)^2+2\sum_{i<j} a_i a_j.
$$
\end{proof}

\begin{proof}[Proof of Theorem~\ref{thm_4_2}]
First we analyse the random variable $N_n^{(k)}=\frac 1{\#V_n} \sum_{v\in V_n} \1_{\{\#\cC_n^v(\sigma)\ge k\}}$ that counts the relative number of vertices in components with at least $k$ entries after vacant set percolation. One has as consequence of Thm.~\ref{thm_4_1} that
\begin{align*}
\E\bigl[ \bigl(N_n^{(k)}\bigr)^2\bigr]&=\P(\#\cC^{o_n}_n(\sigma)\ge k, \#\cC^{o_n'}_n(\sigma)\ge k)\\
&\to \P(\#\cC^{o}(\sigma)\ge k,\#\cC^{o'}(\sigma)\ge k)=\P(\#\cC^{o}(\sigma)\ge k)^2,
\end{align*}
where we used that the event  $\{\#\cC^{o}(\sigma)\ge k,\#\cC^{o'}(\sigma)\ge k\}$ is decidable on the basis of the visiting measure.
Analogously,
$$
\E\bigl[ N_n^{(k)}\bigr]=\P(\#\cC^{o_n}_n(\sigma)\ge k)\to \P(\#\cC^{o}(\sigma)\ge k).
$$
Consequently, $N_n^{(k)}$ converges, as $n\to\infty$, in probability, to $\P(\#\cC^{o}(\sigma)\ge k)$ which itself converges as $k\to\infty$ to $\P(\#\cC^{o}(\sigma)=\infty)$.

1) We first  prove that (a) implies (b).

1.a) \emph{Lower bound.}
For fixed $n\in\N$ we apply the lemma with $a_1,\dots,a_m$ being the sizes of the components that contain more than $k$ entries. Then we get
\begin{align}\label{eq7247}
\bigl(\underbrace{\sum_{v\in V_n} \1_{\{\#\cC^v_n(\sigma)\ge k\}}}_{=\# V_n\, N_n^{(k)}}\bigr)^2- \frac32 \sum_{v,w\in V_n}   \1_{\{\#\cC^v_n(\sigma)\ge k, \#\cC^w_n(\sigma)\ge k, v\not\in \cC^w_n(\sigma)\}} \le \bigl(\#\cC^\mathrm{max}_n(\sigma)\bigr)^2.
\end{align}
Now for fixed $\epsilon>0$ we can pick $k\in\N$ sufficiently large such that for large $n\in\N$
\begin{align*}
\E\Bigl[&\frac 1{(\#V_n)^2}\sum_{v,w\in V_n}   \1_{\{\#\cC^v_n(\sigma)\ge k, \#\cC^w_n(\sigma)\ge k, v\not\in \cC^w_n(\sigma)\}}\Bigr]\\
&= \P(\#\cC^{o_n}_n(\sigma)\ge k, \#\cC^{o_n'}_n(\sigma)\ge k, o_n\not\in \cC^{o_n'}_n(\sigma))\le \epsilon.
\end{align*}
Recalling that $N_n^{(k)}\to \P(\#\cC^o(\sigma)\ge k)$, in probability, we can apply  a diagonalisation argument and choose $k(n)$ in dependence on $n\in\N$ such that $k(n)\to\infty$, $N_n^{(k(n))}\to \P(\#\cC^{o}(\sigma)=\infty)$, in probability, and 
$$
\E\Bigl[\frac 1{(\#V_n)^2}\sum_{v,w=1}^n   \1_{\{\#\cC^v_n(\sigma)\ge k(n), \#\cC^w_n(\sigma)\ge k(n), v\not\in \cC^w_n(\sigma)\}}\Bigr]\to 0.
$$
With~(\ref{eq7247}) we get that
$$
\frac 1{\#V_n} \#\cC_n^\mathrm{max}(\sigma)\ge \P(\#\cC^{o}(\sigma)=\infty)-o_P(1).
$$

1.b) \emph{Upper bound}. Note that $\cC_n^\mathrm{max} (\sigma)\le k\vee \sum_{v\in V_n}\1_{\{\#\cC_n^v(\sigma)\ge k\}}$. Note that for every $k\in\N$,
 $N_n^{(k)}\to \P(\#\cC^o(\sigma)\ge k)$ and $\# V_n\to \infty$, in probability. Again a diagonalisation argument yields a sequence $(k(n))$ tending to infinity so that $N_n^{(k(n))}\to \P(\#\cC^o(\sigma)=\infty)$ and $k(n)/\#V_n\to 0$, in probability. Consequently,
$$ \frac 1{\# V_n} \cC_n^\mathrm{max} (\sigma)\le  \P(\#\cC^{o}(\sigma)=\infty)+o_P(1).
$$

2) It remains to show that (b) implies (a).
As above choosing $a_1,\dots,a_m$ as the sizes of the random components after vacant set percolation with more than $k$ vertices we get that whenever $\#\cC^\mathrm{max}_n(\sigma)\ge k$ or, equivalently, $N_n^{(k)}>0$, one has
$$\Bigl(\frac 1{\#V_n}\#\cC^\mathrm{max}_n(\sigma)\Bigr)^2\le (N_n^{(k)})^2- \underbrace{ \frac 1{(\# V_n)^2} \sum_{v,w\in V_n} \1_{\{\#\cC^{o_n}_n(\sigma)\ge k, \#\cC^{o_n'}_n(\sigma)\ge k, o_n\not\in \cC^{o_n'}_n(\sigma)\}}}_{=:\Delta_n^{(k)}}.$$
We recall that $N_n^{(k)}\to \P(\#\cC^o(\sigma)\ge k)$, in probability.
Now suppose that (a) is not satisfied. Then there is $\epsilon>0$ such that along a subsequence of $n$'s one has for all $k\in\N$ that for large $n$
$$
\P(\#\cC_n^{o_n}(\sigma)\ge k,\#\cC_n^{o'_n}(\sigma)\ge k, o_n\not\in \cC_n^{o_n'}(\sigma))\ge \epsilon.
$$
 Without loss of generality we can assume that the latter is  true along the whole sequence since we otherwise may drop the graphs that are not in the subsequence.
 In particular, we get that as $n\to\infty$
 $$
 \epsilon\le \P(\#\cC_n^{o_n}(\sigma)\ge k) \to \P(\#\cC ^o(\sigma)\ge k)
 $$
 so that $ \P(\#\cC ^o(\sigma)=\infty)=\lim_{k\to\infty} \P(\#\cC ^o(\sigma)\ge k)\ge \epsilon$.
  We can now choose an increasing  $(k(n))_{n\in\N}$ tending to infinity such that
 $$ \frac 1{\# V_n} k(n)\to0 \ \text{  and } \ N_n^{(k(n))}\to \P(\#\cC^o(\sigma)=\infty)\text{, \  in probability,}$$
  and,  for all large $n\in\N$,
$$\P(\#\cC_n^{o_n}(\sigma)\ge k(n),\#\cC_n^{o'_n}(\sigma)\ge k(n), o_n\not\in \cC_n^{o_n'}(\sigma))\ge \epsilon.$$ Then using that $\Delta_n^{(k(n))}$ is bounded by one we conclude that
$$
\epsilon \le \P(\#\cC_n^{o_n}(\sigma)\ge k(n),\#\cC_n^{o'_n}(\sigma)\ge k(n), o_n\not\in \cC_n^{o_n'}(\sigma))= \E[\Delta_n^{(k(n))}]\le \frac \epsilon 2 + \P\bigl(\Delta_n^{(k(n))}\ge \frac \epsilon 2\bigr)
$$
for large $n$ and thus recalling that $N_n^{k(n)}\to\P(\#\cC^o(\sigma)=\infty)\ge \epsilon$ we have with high probability that
$$
\Bigl(\frac 1{\#V_n}\#\cC^\mathrm{max}_n(\sigma)\Bigr)^2\le   (N_n^{(k(n))})^2-\Delta_n^{(k(n))}=\P(\#\cC^{o}(\sigma)=\infty)^2+o_P(1)-\Delta_n^{(k(n))}.
$$
Since $\P(\Delta_n^{(k(n))}\ge \frac \epsilon 2)\ge \frac\epsilon2$ we get that $\frac 1{\#V_n}\#\cC^\mathrm{max}_n(\sigma)$ does not converge to $\P(\#\cC^{o}(\sigma)=\infty)$, in probability.
\end{proof}
\black

\begin{appendix}
\section{Exploration convergence for the configuration model}



For every $n\in\N$ we denote by $\mathbf{d}^{(n)} =(d_k^{(n)})_{k=1,\dots,n}$ an $\N_0$-valued sequence so that $N^{(n)}:=\sum_{k=1,\dots,n} d_k^{(n)}\in 2\N$. We denote by $D_n$ an $\N_0$-valued random variable with distribution
$$
\frac 1n\sum_{k=1}^n \delta_{d_k^{(n)}}
$$ 
and let $D_n^*$ denote a $\N$-valued random variable with size biased distribution distribution
$$
\frac 1{N^{(n)}}\sum_{k=1}^n d_k^{(n)} \delta_{d_k^{(n)}}
$$
so that
$$
\P(D_n^*=k)=\frac1{\E[D_n]}\,k\, \P(D_n=k).
$$
The random graph with fixed degree sequence $(\mathbf d^{(n)})_{n\in\N}$ is a sequence of random multigraphs that is formed by pairing the even number of half-edges in 
$$
\IH_n=\bigcup_{j\in\{1,\dots,n\}}  \{j\} \times \{1,\dots, d_j^{(n)}\}
$$ 
and forming the random multigraph  $G_n$ by taking as vertex set $\{1,\dots,n\}$ and establishing for each $\langle (j_1,r_1),(j_2,r_2)\rangle$ of the latter random pairing an edge connecting $j_1$ and $j_2$. A closely related model is the configuration model, where the degrees itself are chosen randomly and then the graph is formed by choosing a random fixed degree graph for this configuration as above.

As is well known fixed degree sequence graphs converge locally to  Galton-Watson trees with a modified offspring distribution for the root vertex.
For a $\N_0$-valued random variable $D$ and a $\N$-valued random variable $D^*$ we call a random rooted graph~$G^o$ a $\mathrm {GWP}(D,D^*)$-tree, if the root has a $D$-distributed number of descendants and if every descendant has itself an independent  $D^*-1$-distributed number of descendants.
\medskip

\begin{theorem}[Coupling explorations for the random fixed degree graph] \label{theo:12} Let $(\mathbf d^{(n)})_{n\in\N}$ be a sequence as above satisfying $\liminf_{n\to\infty}\,\E[D_n]>0$.
Moreover, let $(\ell_n)_{n\in\N}$ be $\N$-valued with
\begin{align}\label{eq87346}
\ell_n=o\bigl(\sqrt n \, \E[D_n^*\wedge \sqrt n]^{-1}\bigr).
\end{align}

Let $(\bar G_n)_{n\in\N}$ denote a sequence of fixed degree random graphs with degree sequence $(\mathbf d^{(n)})_{n\in\N}$ as introduced above and choose independent on $\{1,\dots,n\}$ uniformly distributed roots $\bar o_n$. Moreover, let for each $n\in\N$, $G_n^{o_n}$ be a $\mathrm {GWP}(D_n,D_n^*)$-tree. 
Then for every sequence $(K_n)$  of exploration rules, one can couple the $\ell_n$-step $K_n$-explorations $\bar{\mathfrak E}_{\ell_n}^{(n)}$ and ${\mathfrak E}_{\ell_n}^{(n)}$ of $\bar G_n^{\bar o_n}$ and $G_n^{o_n}$  such that they agree with high probability as $n\to\infty$.
\end{theorem}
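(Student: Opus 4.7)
The plan is to construct a joint coupling by running the $K_n$-exploration simultaneously on $\bar G_n^{\bar o_n}$ and on the $\mathrm{GWP}(D_n,D_n^*)$-tree $G_n^{o_n}$, resolving unknown structure in both models lazily as the exploration demands. The root degrees agree in distribution (the configuration-model root has degree $D_n$ by uniform sampling, and the tree root has exactly $D_n$ children by construction), so we couple them exactly. At every subsequent step the rule $K_n$ picks an edge of the current disclosed graph, and if that edge has a $\partial$-endpoint we resolve it in both models: in the tree by drawing a fresh independent $D_n^*$-distributed degree for the new vertex, and in the configuration model by pairing the chosen half-edge with a uniform remaining half-edge. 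Since $K_n$ is isomorphism-invariant and the disclosed graphs stay isomorphic as long as previous steps have succeeded, the coupling propagates, and failure at a step can occur only if either (i) the configuration model produces a collision (pairing with a half-edge of an already-revealed vertex, yielding a self-loop or multi-edge absent in the tree), or (ii) the degree of the newly reached vertex in the configuration model fails to match the $D_n^*$-draw used on the tree side.

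To control both failure modes, introduce the truncation event that every revealed degree is at most $\sqrt n$. Since
$$
\ell_n\, \P(D_n^*\ge \sqrt n) \le \ell_n \,\E[D_n^*\wedge \sqrt n]/\sqrt n = o(1)
$$
by hypothesis, this event holds with high probability, and on it the sum $R_k$ of degrees of the first $k+1$ revealed vertices has mean of order $\E[D_n]+ k\,\E[D_n^*\wedge \sqrt n]$. Under the truncation event the step-$k$ collision probability is bounded by $R_{k-1}/(N^{(n)}-2k+1)$, while the total-variation distance between the conditional law of the freshly reached vertex's degree (given no collision) and $D_n^*$ is of order $R_{k-1}/N^{(n)}$; both arise because sampling a uniform remaining half-edge among the $N^{(n)}-2k$ unmatched ones amounts to a size-biased sample from the currently unrevealed vertices. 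Summing these over $k\le \ell_n$ and using $\liminf \E[D_n]>0$ (so $N^{(n)}\gtrsim n$), the total expected error is of order
$$
\frac{\ell_n^2\, \E[D_n^*\wedge \sqrt n]}{n} = o(1)
$$
by assumption (\ref{eq87346}), so both failure modes occur with vanishing probability.

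The main obstacle is to argue that the total-variation estimates above are uniform over the adversarial choice of $K_n$, i.e.\ that the distribution of the degree of the next reached vertex depends only on the set of previously revealed vertices and not on which of their free half-edges the rule selects. This is the defining symmetry of the configuration model: conditional on the current disclosed subgraph, the pairing of any specified free half-edge is uniform among the remaining ones, so the degree of the reached vertex is size-biased over unrevealed vertices regardless of which free half-edge $K_n$ selected. Combining the exact root-degree coupling with successful coupling at all $\ell_n$ exploration steps, the explorations $\bar{\mathfrak E}_{\ell_n}^{(n)}$ and $\mathfrak E_{\ell_n}^{(n)}$ agree modulo graph isomorphism with probability tending to $1$.
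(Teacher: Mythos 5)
Your construction and estimates are essentially the paper's (simultaneous lazy exploration with a $\sqrt n$ truncation, collision probabilities $\asymp R_{k-1}/N^{(n)}$, size-biased total-variation bound $\asymp R_{k-1}/N^{(n)}$, and the observation that the configuration-model symmetry makes these uniform over the adversarial rule $K_n$). However, there is a genuine gap that the paper handles and your proposal does not.

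The lazily-resolved process you build on the configuration-model side is not a $K_n$-exploration of $\bar G_n^{\bar o_n}$ in the sense of the paper. By definition, the $V(k)$-disclosed subgraph must show \emph{every} edge of $\bar G_n$ with an endpoint in $V(k)$, including edges between two already-revealed vertices that arise from half-edge pairings the rule never chose. Your lazy scheme resolves only one pairing per step and leaves the other half-edges dangling to $\partial$; if two such dangling half-edges of disclosed vertices are secretly matched in $\bar G_n$, the true disclosed subgraph contains an edge the lazy exploration does not. This is a third failure mode, captured by neither your (i) nor (ii), and it is not covered by your error bound $\ell_n^2\, \E[D_n^*\wedge\sqrt n]/n$: the right cost is of order $(\mathcal N_{\ell_n})^2/N^{(n)}$ where $\mathcal N_{\ell_n}\asymp \ell_n\,\E[D_n^*\wedge\sqrt n]$ is the total number of dangling half-edges, giving $\ell_n^2\, \E[D_n^*\wedge\sqrt n]^2/n$ with an extra factor $\E[D_n^*\wedge\sqrt n]$. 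This is still $o(1)$ under (\ref{eq87346}), but it requires a separate argument. The paper addresses exactly this point by introducing a third process $\bar{\bar{\mathfrak E}}$ which \emph{is} a genuine $K_n$-exploration of a completed $\bar G_n^{\bar o_n}$ (obtained by pairing the leftover half-edges uniformly afterward), and proving that $\bar{\bar{\mathfrak E}}_{\ell_n}=\bar{\mathfrak E}_{\ell_n}$ precisely when the induced subgraph $\mathcal G_{\ell_n}^{(n)}$ of $\bar G_n$ on the revealed vertices contains no cycles, with cycle probability bounded by $(\mathcal N_{\ell_n}^{(n)})^2/(N^{(n)}-\mathcal N_{\ell_n}^{(n)})$. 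To complete your proof you must either add this cycle-probability step, or strengthen the lazy scheme so that when a new vertex is disclosed \emph{all} of its half-edges are tested against the already-revealed half-edge set, and then show the corresponding extra collision cost is still $o(1)$.
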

\smallskip

Before we prove the theorem, we discuss under which conditions the $\mathrm{GWP}(D_n,D_n^*)$-trees $G_n^{o_n}$ can be replaced by a $\mathrm{GWP}(D,D^*)$-tree, where $D$ and $D^*$ are weak limits of $D_n$ and $D_n^*$. Moreover, we will discuss the constraint~(\ref{eq87346}) in the case of polynomially decaying tail probabilities of $\P_{D_n}$.
\begin{lemma}\label{le:8246}
For two $\N_0$-valued random variables $D_1$ and $D_2$, two $\N$-valued random variables $D_1^*$ and $D_2^*$, $\ell\in\N$ and an exploration rule one can couple the $\ell$-step $K$-explorations of a $\mathrm{GWP}(D_1,D_1^*)$- and a $\mathrm{GWP}(D_2,D_2^*)$-tree such that they agree with probability greater or equal to
$$
1-d_\mathrm{TV}(\P_{D_1},\P_{D_2})-\ell \,d_\mathrm{TV}(\P_{D_1^*},\P_{D_2^*}).
$$
\end{lemma}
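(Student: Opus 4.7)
The plan is to build the two explorations step by step on a common probability space using optimal couplings at each place where fresh randomness enters, and then to apply a union bound over the $\ell+1$ such places. The key observation is that all the randomness in the $K$-exploration of a $\mathrm{GWP}(D,D^*)$-tree comes from two sources: (i) the degree of the root, sampled from $\P_D$ at step $0$ when the $V(0)=\{o\}$-disclosed subgraph is formed, and (ii) the degree of each newly revealed vertex, sampled independently from $\P_{D^*}$ whenever an exploration step picks an edge leading to $\partial$. Over the $\ell$ exploration steps, at most $\ell$ new vertices are disclosed, so at most $\ell$ independent draws from $D^*$ are consumed.

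The coupling is constructed inductively. At step $0$, sample the root-degrees in both trees using the optimal coupling of $\P_{D_1}$ and $\P_{D_2}$; the two roots receive identical degree with probability $1-d_\mathrm{TV}(\P_{D_1},\P_{D_2})$. Assume inductively that after step $k-1$ there is a random graph isomorphism $\phi$ taking the exploration $\bar{\mathfrak E}_{k-1}$ of the $\mathrm{GWP}(D_1,D_1^*)$-tree to the exploration $\mathfrak E_{k-1}$ of the $\mathrm{GWP}(D_2,D_2^*)$-tree. By the isomorphism-invariance of $K$ recalled in Section~\ref{sec:expl_conv}, the pushforward of $K(\bar{\mathfrak E}_{k-1},\,\cdot\,)$ under $\phi$ equals $K(\mathfrak E_{k-1},\,\cdot\,)$, so the two kernels can be jointly sampled to pick corresponding edges $\bar e_k$ and $e_k=\phi(\bar e_k)$. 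If $\bar e_k$ does not lead to $\partial$, no fresh randomness is consumed and $\phi$ continues to couple the updated explorations. If $\bar e_k$ does lead to $\partial$, the degree of the new vertex must be drawn from $\P_{D_1^*}$ in one exploration and from $\P_{D_2^*}$ in the other; using the optimal coupling of $\P_{D_1^*}$ and $\P_{D_2^*}$ these degrees match with probability $1-d_\mathrm{TV}(\P_{D_1^*},\P_{D_2^*})$, and $\phi$ is extended accordingly.

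A union bound over the one possible mismatch at step $0$ together with the at most $\ell$ possible mismatches at the subsequent exploration steps yields that the full $\ell$-step explorations agree modulo graph isomorphism with probability at least $1-d_\mathrm{TV}(\P_{D_1},\P_{D_2})-\ell\,d_\mathrm{TV}(\P_{D_1^*},\P_{D_2^*})$, which is the claimed bound. The only substantive point is the joint simulation of the edge-choices under an already fixed isomorphism $\phi$; this is made legitimate precisely by the isomorphism-invariance clause in the definition of an exploration rule, so no real obstacle is anticipated.
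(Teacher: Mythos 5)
Your proposal is correct and follows essentially the same approach as the paper's (very terse) proof: couple the root-degree draw optimally, then inductively couple each degree draw at a newly disclosed vertex optimally, using isomorphism-invariance of $K$ to align the edge-choice steps for free, and apply a union bound over the one root draw plus the at most $\ell$ vertex-discovery draws. Your write-up simply spells out the induction and the role of isomorphism-invariance that the paper leaves implicit.
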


\begin{proof}
The initial explorations can be coupled such that they agree with probability
$1-d_\mathrm{TV}(\P_{D_1},\P_{D_2})$. Moreover, in each exploration step in which  a not yet disclosed vertex is explored we can apply a coupling hat succeeds with conditional probability $1-d_\mathrm{TV}(\P_{D_1}^*,\P_{D_2}^*)$.
\end{proof}

\begin{remark}\label{rem:246}\begin{enumerate}\item  If in Theorem~\ref{theo:12} additionally for random variables $D$ and $D^*$ one has 
\begin{align}\label{eq84621}
d_\mathrm{TV}(\P_{D_n},\P_{D})+\ell_n \,d_\mathrm{TV}(\P_{D_n^*},\P_{D^*})\to 0,
\end{align}
then the result remains true, when letting $G_n^{o_n}$ be $\mathrm{GWP}(D,D^*)$-trees instead (consequence of Remark~\ref{le:8246}).
\item Let $(p_k)_{k\in\N_0}$ be a sequence of probability weights with $p_0\not=1$. Then one may choose a degree sequence $(\mathbf d^{(n)})_{n\in\N}$ such that $|\P(D_n=k)-p_k|\le \frac 1n\wedge p_k$. We let
$\epsilon_n=\sum_{k=1}^\infty (\frac 1n\wedge p_k)$. Then
\begin{align*}
d_\mathrm{TV}&(\P_{D_n^*},\P_{D^*})= \sum_{k=1}^\infty k \Bigl| \frac {\P(D_n=k)}{\E[D_n]}-\frac {p_k}{\E[D]}\Bigr| \\
&\le \frac 1{\E[D]} \sum_{k=1}^\infty k |\P(D_n=k)-p_k|+\frac {|\E[D]-\E[D_n]|}{\E[D]}\le 2\frac{\epsilon_n}{\E[D]}.
\end{align*}
Thus for given distribution $(p_k)_{k\in\N_0}$ we can choose a  degree sequence  $(\mathbf d^{(n)})_{n\in\N}$ as above and can replace as described in (a) the GWP-tree if  $\ell_n \epsilon_n\to0$. An elementary computation shows that if $p_k=\mathcal O(k^{-\tau})$ for a $\tau>2$, then $\epsilon_n=\mathcal O(n^{-1+\frac 2\tau})$ and one can choose $(\ell_n)$ of order $o(n^{1-\frac2\tau})$ and has validity of~(\ref{eq84621}).
\item We analyse constraint~(\ref{eq87346}) on $\ell_n$. Suppose that we are given a degree sequence $(\mathbf d^{(n)})_{n\in\N}$ such that for a $\tau>2$ and finite $C$ one has $\P(D_n=k) \le  C k^{-\tau}$ for $k\in\N$ and $n\in\N$. Moreover assume that $\liminf_{n\to\infty} \E[D_n]>0$.  
Then
$$
\E[D_n^* \wedge \sqrt n] =\frac 1{\E[D_n]} \sum_{k=1}^\infty k\, \P(D_n=k) (k\wedge \sqrt n)\le \frac C{\E[D_n]} \sum_{k=1}^\infty k^{1-\tau}\, (k\wedge \sqrt n)
$$
Elementary calculus gives that
$$
\sum_{k=1}^\infty  k^{1-\tau}  (k\wedge \sqrt n)\approx \begin{cases} n^{(3-\tau)/2},& \text{ if }\tau \in(2,3),\\
 \log n,& \text{ if }\tau =3,\\
  1,& \text{ if }\tau >3.\\
\end{cases}
$$
Hence, (\ref{eq87346}) is satisfied, if
\begin{align}\label{eq8426}
\ell_n = \begin{cases}
o(n^{\frac \tau2-1}), &\text{ if } \tau<3,\\
o(\sqrt n(\log n)^{-1}), &\text{ if } \tau=3,\\
o(\sqrt n), & \text{ if } \tau>3.
\end{cases}
\end{align}
\item When working with degree sequences $(\mathbf d^{(n)})_{n\in\N}$ as in (b), one also apply (c) and gets that  Theorem~\ref{theo:12} is applicable with  $G_n^{o_n}$ being replaced by $\mathrm{GWP}(D,D^*)$-trees if $(\ell_n)$ is of order $o\bigl(n^{(1-\frac 2\tau)\wedge \frac 12}\bigr)$. Indeed, for $\tau\in(2,3]$, the constraint of (c) is less restrictive than the one of (b) and for $\tau\ge4$, we get as assumption that $(\ell_n)$ is of order $o(\sqrt n)$.
\end{enumerate}
\end{remark}

\begin{proof}[Proof of Theorem~\ref{theo:12}]
In the proof we will stop explorations once we find vertices with exceptionally large degree: we let $a_n=\lfloor\sqrt n\rfloor$ for $n\in\N$ and note that 
\begin{align}\label{eq83213}
\ell_n= o\bigl( \P(D_n^*\ge a_n)^{-1}\bigr) \ \text{ and } \   \ell_n =o\Bigl( \E[\1_{\{D_n^* <a_n\}} D_n^*]^{-1} \sqrt n\Bigr)
\end{align}
as consequence of~(\ref{eq87346}).

We fix a sequence of exploration rules $(K_n)$.
We directly introduce a coupling of two explorations $\bar{ \mathfrak E}_{\ell_n}$ and $\mathfrak E_{\ell_n}$ of $\bar G_n^{\bar o_n}$ and $G_n^{o_n}$, respectively. Formally, $\bar{ \mathfrak E}_{\ell_n}$ will not be a $K_n$-exploration of $\bar G_n^{\bar o_n}$. However, it will be very close in an appropriate sense to be discussed later.

We  start with the construction of $\mathfrak E^{(n)}_{\ell_n}$. In particular, this will include a definition of the graph $G_n^{o_n}$.
First we pick a label $o_n$ from $[n]$ uniformly at random and form $ \mathfrak E_{\ell_n}$ as the rooted graph with edges labeled $(o_n,1),\dots, (o_n,d_{o_n}^{(n)})\in \IH_n$ that connect $o_n$ with  $\partial$ with no edge history.
Then we iteratively do the following for $k=1,2,\dots$:
\begin{enumerate}
\item We pick a $K_n(\mathfrak E^{(n)}_{k-1},\,\cdot\,)$-distributed edge $h_k'$ of the graph of the exploration $\mathfrak E^{(n)}_{k-1}$ and add $h_k'$ to the edge history.
\item If $h_k'$ points in $\mathfrak E^{(n)}_{k-1}$ to $\partial$,  we choose an on $\IH_n$ uniformly distributed half edge $h_k''$  and  
 build the graph of $\mathfrak E^{(n)}_k$ by
\begin{enumerate}
\item  by redirecting $h_k'$ to a newly added vertex being labelled by the vertex $v_k$ being associated to $h_k''$ (if the label has not appeared yet) 
\item which has $\mathrm {deg}_{n}(v_k)-1$ further edges pointing towards $\partial$ with labels $\IH_n(v_k)\backslash \{h_k''\}$ (if the labels have not appeared yet). 
\end{enumerate}
\end{enumerate}
Obviously the result is an $K_n$ exploration of a random rooted tree $G_n^{o_n}$ as described above. 

We couple this exploration with an ``exploration'' of $\bar G_n^{\bar o_n}$ as follows. We start with the vertex $\bar o_n=o_n$ and attach to it edges that are indexed by  $\IH_n(o_n)$ all connecting $o_n$ and $\partial$ so that $\bar{ \mathfrak E}^{(n)}_0=\mathfrak E^{(n)}_0$. Then we iteratively do the following for $k=1,2,\dots$:
\begin{enumerate}
\item If  $\bar{ \mathfrak E} ^{(n)}_{k-1}$ agrees with $\mathfrak E^{(n)}_{k-1}$, then we choose $\bar h_k'=h_k'$  and otherwise we choose an independent $K_n( \bar{ \mathfrak E} ^{(n)}_{k-1},\,\cdot\,)$-distributed $\bar h_k'$ and we continue as follows if $\bar h_k'\not=\partial$.
\item If   $h_k''\not\in \{h_1',\dots,h_k',h_1'',\dots,h_{k-1}''\}$ we set $\bar h_k''=h_k''$ and otherwise we draw $\bar h_k''$ uniformly at random from $\IH_n\backslash  \{\bar h_1',\dots,\bar h_k',\bar h_1'',\dots,\bar h_{k-1}''\}$.
\item We distinguish two cases: 
\begin{enumerate}\item if there is no $\bar h_k''$-indexed edge present in $\bar {\mathfrak E}_{k-1}^{(n)}$, then we reconnect the edge $\bar h_k'$ towards a newly formed vertex with the to $\bar h_k''$ associated label $\bar v_k$ (which equals $v_k$, if we have $\bar h_k''=h_k''$) and attach to it further edges indexed by $\IH_n(\bar v_k)\backslash \{\bar h_k''\}$
 \item if there is an $\bar h_k''$-indexed edge already present  in the exploration $\mathfrak E^{(n)}_{k-1}$, then it has to be connected to $\partial$ (since it was not yet connected in previous steps) and  we connect the two involved vertices by one edge and use as edge one of the two identifiers uniformly at random (the other edge is removed).
\end{enumerate}
\end{enumerate}

First note that $\mathfrak E_{\ell_n}^{(n)}$ coincides with $\bar {\mathfrak E}_{\ell_n}^{(n)}$ if  in the first $\ell_n$ steps in (b) one always chooses $\bar h_k''=h_k''$ and in (c) always case (i) enters. 
We  let $\cN_k^{(n)}$ denote the number of half-edges in $\mathfrak E_k^{(n)}$ (where an edge connecting two vertices from $\II$ is counted twice and an edge leading to $\partial$ only once).
For $k=1,\dots,\ell_n$, we denote 
$$
\cF_k=\bigl\{\mathfrak E_k^{(n)}\not=\bar{\mathfrak E}_k^{(n)},\, \mathfrak E_{k-1}^{(n)}=\bar{\mathfrak E}_{k-1}^{(n)}, \,\mathrm{degmax} (\mathfrak E_{k-1}^{(n)})<a_n\bigr\},
$$
where $\mathrm{degmax} (\mathfrak E_{k-1}^{(n)})$ denotes the maximal degree appearing in the graph of the exploration $\mathfrak E_{k-1}^{(n)}$.
Moreover, we denote $\cF_0=\{\mathfrak E_0^{(n)}=\bar {\mathfrak E}_0^{(n)}\}$ which is the sure event.
Then
\begin{align*}
\P(\cF_k)&\le \frac 1{N^{(n)}} \E\bigl[ \1_{\{\mathrm{degmax} (\mathfrak E_{k-1}^{(n)}<a_n)\}}
\cN_{k-1}\Bigr]\\
& \le \frac1{N^{(n)}}\bigl(\E[\1_{\{D_n<a_n\}} D_n]+(k-1) \,\E[\1_{\{D_n^*<a_n\}} D_n^*]\bigr)
\end{align*}
 and we get that the probability for the coupling to fail within the first $\ell_n$ steps is bounded by
 \begin{align*}
\P\bigl(&\mathrm{degmax} (\mathfrak E_{\ell_n}^{(n)})\ge a_n\bigr)+ \sum_{k=1}^{\ell_n}\P(\cF_k)\\
& \le \underbrace{\P(D_n\ge a_n)}_{\le \P(D_n^*\ge a_n)} +\ell_n\, \P(D_n^*\ge a_n) + \frac 1{N^{(n)}} \sum_{k=1}^{\ell_n}\bigl(\E[D_n]+(k-1) \E[\1_{\{D_n^*<a_n\}}D_n^*]\bigr)\\
&\le (\ell_n+1)\,\P(D_n^*\ge a_n)+  \frac 1{2 N^{(n)}} \ell_n^2\, \E[\1_{\{D_n^*<a_n\}} D_n^*]+\frac {\ell_n}n.
 \end{align*}
By (\ref{eq83213}),   $\ell_n^2 \, \E[\1_{\{D_n^*<a_n\}} D_n^*]^2= o(n)$ and since $\E[\1{\{D_n^*<a_n\}} D_n^*]\ge \P(D_n^*<a_n)\to1$ and  $N^{(n)}/n= \E[D_n]$ whose liminf is strictly bigger than zero, we get that $\ell_n^2\, \E[\1_{\{D_n^*<a_n\}} D_n^*]=o(N^{(n)})$. Since by (\ref{eq83213}) also $(\ell_n+1)\,\P(D_n^*\ge a_n)\to0$ we get that the $\ell_n$-step coupling succeeds with high probability.

Now note that $\bar{ \mathfrak E}_{k}^{(n)}$ is in the sense of our definition not a $K_n$-exploration  of any graph since the graphs appearing in $\bar{ \mathfrak E}_{1}^{(n)},\dots$ are not necessarily consistent in the sense that in step (c.ii) edges are removed and paired newly what can't happen when exploring a graph.
To fix this and rigorously relate $\bar{ \mathfrak E}_{\ell_n}^{(n)}$ to a random fixed degree graph we form $\bar G_n$ by pairing the half-edges in $\IH_n$ that have not been paired in the graph of $\bar{ \mathfrak E}_{\ell_n}^{(n)}$ uniformly at random (with ties as in (c.ii) being treated as above). We stress that now $\bar G_n^{\bar o_n}$ is indeed a random fixed degree graph for the degree sequence $\mathbf d^{(n)}$ with an independent uniformly chosen root vertex $o_n$.

Now we are in the position to define a $K_n$-exploration of $G_n^{o_n}$. We let $\bar{\bar{\mathfrak E}}_0^{(n)}$ be the $\{\bar o_n\}$-disclosed graph of $\bar G_n^{\bar o_n}$ together with an empty history of edges. Then for $k=1,\dots,\ell_n$, do the following
\begin{enumerate}
\item if $\bar{\bar{\mathfrak E}}_{k-1}^{(n)}$ agrees with ${\bar{\mathfrak E}}_{k-1}^{(n)}$, then form $\bar{\bar{\mathfrak E}}_{k}^{(n)}$ by adding $\bar h'_k$ to the history of explored edges and form the graph as the appropiately disclosed graph of $\bar G_n^{\bar o_n}$;
\item otherwise choose a $K_n( \bar{\bar{\mathfrak E}}_{k}^{(n)} ,\,\cdot\,)$-distributed edge $\bar{\bar h}_k'$ and  form $\bar{\bar{\mathfrak E}}_{k}^{(n)}$ by exploring $\bar{\bar h}_k'$ in $\bar G_n^{\bar o_n}$ instead.
\end{enumerate}  
We denote by  $\cG_{k}^{(n)}$ the graph $G_n$ restricted to the vertex set appearing in $\bar {\mathfrak E}_{k}^{(n)}$. We note that  $\cG_k^{(n)}$ does not contain circles if and only if
$\bar {\mathfrak E}_{k}^{(n)}=\bar {\bar{\mathfrak E}}_{k}^{(n)}$.
Now  given $\bar{\mathfrak E} _{k-1}^{(n)}$ the event that $\cG_{k-1}^{(n)}$ does not contain circles is independent of the choice of the vertex $\bar h_k'$. Therefore, $(\bar{\bar{\mathfrak E}}_{k}^{(n)})$ is a $K_n$-exploration of $G_n^{o_n}$. Moreover, $(\bar{\bar{\mathfrak E}}_{\ell_n}^{(n)})$ agrees with $({\bar{\mathfrak E}}_{\ell_n}^{(n)})$ if and only if $\cG^{(n)}_{\ell_n}$ does not contain cycles.

Note that in the case that ${\mathfrak E}_{\ell_n}^{(n)}=\bar{\mathfrak E}_{\ell_n}^{(n)}$ the graph $\cG_{\ell_n}^{(n)}$ contains circles, if there are edges in $\bar{\mathfrak E}_{\ell_n}^{(n)}$ that point to $\partial$ that are actually connected to each other in $\bar G_n$. On $\{\mathfrak E_{\ell_n}^{(n)}=\bar{\mathfrak E}_{\ell_n}^{(n)}\}$, given $\bar{\mathfrak E}_{\ell_n}^{(n)}$ the probability of finding no circle when matching the remaining edges is bounded from above by
\begin{align}\label{eq924762}
\frac {(\cN_{\ell_n}^{(n)})^2}{N^{(n)}-\cN^{(n)}_{\ell_n}}.
\end{align}
By construction of the $\mathfrak E^{(n)}$-exploration we may define (on a possibly enlarged probability space) a sequence of independent random variables  $\cD^{(n)}_0, \cD^{(n)}_1,\dots$ such that $\cD^{(n)}_0$ is a copy of $D_n$ and $\cD^{(n)}_1,\ldots$ are copies of $D_n^*$ and such that 
$\cN_{k}^{(n)}\le\cD^{(n)}_0+\ldots+\cD^{(n)}_{k}$ for $k=0,\dots,\ell_n$. 
We note that $(\cD^{(n)}_0:n\in\N)$ is tight. Moreover, we have with 
 high probability that  $\max(\cD^{(n)}_1,\ldots,\cD^{(n)}_{\ell_n})<a_n$. Together with
$$
\E\bigl[\1_{\{\max(\cD^{(n)}_1,\ldots,\cD^{(n)}_{\ell_n})<a_n\}}\cD_1^{(n)}+\ldots+\cD_{\ell_n}^{(n)}\bigr]\le \ell_n \,\E[\1_{\{D_n^*<a_n\}} D_n^*] =o(\sqrt n)
$$
and the  Markov inequality we conclude that $\cN^{(n)}_\ell=o_P(\sqrt n)$. Since $N^{(n)}$ is of order~$n$ we conclude that the probability in~(\ref{eq924762}) converges to zero, in probability.
Thus we showed that, with high probability, $\mathfrak E^{(n)}_{\ell_n}={\bar {\mathfrak E}}^{(n)}_{\ell_n}=\bar{\bar {\mathfrak E}}^{(n)}_{\ell_n}$.
\end{proof}
\medskip

\begin{theorem}[Local convergence for the configuration model]
Let  $(p_r)_{r\in\N_0}$ be a sequence of probability weights with $p_0\not=1$ and suppose that for a $\tau\in(2,\infty)$
$$
p_r =\cO(r^{-\tau})\text{ \ as \ }r\to\infty.
$$
Let $G_n$ be a configuration model with weights $(p_r)_{r\in\N_0}$ meaning that 
we take a sequence of i.i.d.\ $(p_r)$-distributed random variables $(q_k)_{k\in\N}$, let
$$
d_k^{(n)}=q_k+\1_{\{k=n\}} \1_{\{\sum_{v=1}^n q_v\text{ is odd}\}}
$$
and define $(G_n)$ in such a way that given $\mathbf d^{(n)}=(d^{(n)}_k)_{k=1,\dots,n}$, $G_n$ is a random graph with fixed degree sequence $\mathbf d^{(n)}$.
We suppose that $D$ and $D^*$ are random variables with weights $(p_r)_{r\in\N_0}$ and $(p^*_r)_{r\in\N_0}$ given by
$$
p^*_r= \frac 1{\E[D]} r p_r.
$$
If
$$
\ell_n=\begin{cases} o(n^{\frac{\tau- 2}\tau}), & \text{ if } \tau<4, \\ 
o(n^{\frac 12}(\log n)^{-1}), & \text{ if }\tau=4,\\
o(n^{\frac 12}), & \text{ if }\tau>4,\end{cases}
$$
then $(G_n^{o_n})$ with an independent uniformly chosen root converges $(\ell_n)$-locally to a $\mathrm{GWP}(D,D^*)$ random tree $G^o$.
\end{theorem}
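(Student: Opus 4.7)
The plan is to combine Theorem~\ref{theo:12} with Remark~\ref{rem:246}(a): conditionally on the random degree sequence $\mathbf d^{(n)}$, the graph $G_n$ is a fixed-degree graph, so Theorem~\ref{theo:12} couples its $K_n$-exploration with that of a $\mathrm{GWP}(D_n, D_n^*)$-tree whose offspring distributions are the empirical ones determined by $\mathbf d^{(n)}$, and Remark~\ref{rem:246}(a) then further couples this to a $\mathrm{GWP}(D, D^*)$-tree. The three items to verify in probability are: (1) $\liminf_n \E[D_n] > 0$; (2) $\ell_n = o(\sqrt n\, \E[D_n^* \wedge \sqrt n]^{-1})$ (the hypothesis of Theorem~\ref{theo:12}); and (3) $d_\mathrm{TV}(\P_{D_n}, \P_D) + \ell_n\, d_\mathrm{TV}(\P_{D_n^*}, \P_{D^*}) \to 0$ (the hypothesis of Remark~\ref{rem:246}(a)). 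The perturbation $d_k^{(n)} - q_k \in \{0,1\}$, nonzero for at most one index, contributes $O(1/n)$ to either TV distance and is negligible throughout.

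Item (1) is the strong law: $\E[D_n] = \frac1n\sum_{k=1}^n q_k + O(1/n) \to \E[D] > 0$. For (2), writing $\E[D_n^* \wedge \sqrt n] = (N^{(n)})^{-1}\sum_k d_k^{(n)}(d_k^{(n)} \wedge \sqrt n)$ exhibits the numerator as an empirical mean of truncated i.i.d.\ variables; using $p_r = O(r^{-\tau})$ one bounds the per-summand variance by $\E[q_1^4 \1_{\{q_1 \le \sqrt n\}}] + n\,\E[q_1^2\1_{\{q_1 > \sqrt n\}}] = O(n^{(5-\tau)/2})$ (for $\tau\in(2,5)$; obvious bound otherwise), and a Chebyshev argument then yields $\E[D_n^* \wedge \sqrt n] = (1+o_P(1))\, \E[D^* \wedge \sqrt n]$. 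Combining with the asymptotics of $\E[D^* \wedge \sqrt n]$ from Remark~\ref{rem:246}(c) and the stated $\ell_n$ confirms (2) in each of the regimes $\tau \in (2,3)$, $\tau = 3$, $\tau > 3$.

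The core calculation is (3). With $\hat p_r := \frac1n\#\{k : q_k = r\}$ one has $\E|\hat p_r - p_r| \le \sqrt{p_r/n} \wedge 2 p_r$. Splitting $\sum_r|\hat p_r - p_r|$ at the balancing threshold $r \asymp n^{1/\tau}$ and using the tail $p_r = O(r^{-\tau})$ yields $\E\, d_\mathrm{TV}(\P_{D_n}, \P_D) = O(n^{-1/2})$ for every $\tau > 2$. For the size-biased distribution a short manipulation gives $d_\mathrm{TV}(\P_{D_n^*}, \P_{D^*}) \le C\,\sum_r r|\hat p_r - p_r|$ (with $C = 2/\hat\mu_n$ bounded by (1) with high probability); optimising the threshold $M$ in the bound $n^{-1/2}\sum_{r \le M} r^{1-\tau/2} + \sum_{r > M} r^{1-\tau}$ yields $\E\sum_r r|\hat p_r - p_r|$ of order $n^{-1/2}$ for $\tau > 4$, $n^{-1/2}\log n$ for $\tau = 4$, and $n^{2/\tau - 1}$ for $\tau \in (2,4)$ (choosing $M \asymp n^{1/\tau}$), which when multiplied by the stated $\ell_n$ gives $o(1)$ in each regime.

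The main subtlety is that Theorem~\ref{theo:12} is formulated for \emph{deterministic} degree sequences, whereas $\mathbf d^{(n)}$ is random here. I would address this by applying its construction pathwise in $\mathbf d^{(n)}$; inspection of its proof shows that the coupling failure probability is bounded by an explicit $\mathbf d^{(n)}$-measurable functional, essentially $(\ell_n+1)\,\P(D_n^* \ge \sqrt n) + \ell_n^2\,\E[\1_{\{D_n^* < \sqrt n\}} D_n^*]/N^{(n)} + \ell_n/n$, whose summands all tend to zero in probability by the estimates above. Fubini then produces an unconditional coupling, and composition with the Remark~\ref{rem:246}(a) coupling — whose failure probability is similarly an explicit function of the TV distances controlled in (3) — yields the desired $(\ell_n)$-local convergence, uniformly in the choice of exploration rules $(K_n)$ as required by Definition~\ref{benjaminischramm}.
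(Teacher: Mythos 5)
Your proposal is correct and follows essentially the same route as the paper: verify $\liminf \E[D_n]>0$, the $\ell_n$-hypothesis of Theorem~\ref{theo:12}, and the total-variation condition of Remark~\ref{rem:246}(a) for the random empirical degree distribution, using the bound $\E|\hat p_r - p_r| \le \sqrt{p_r/n}\wedge 2p_r$ with a threshold at $r\asymp n^{1/\tau}$ (the paper makes exactly this choice, $a_n=n^{1/\tau}$), and then deal with randomness of $\mathbf d^{(n)}$ by a pathwise/conditioning argument. The only cosmetic difference is that the paper transfers the $\cO_P$ estimates to a.s.\ $\cO$ estimates by an auxiliary coupling of the degree sequences and applies Theorem~\ref{theo:12} $\omega$-wise, whereas you integrate the $\mathbf d^{(n)}$-measurable coupling-failure bound directly; these are equivalent implementations of the same idea.
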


\begin{proof}
We will apply Theorem~\ref{theo:12} and Remark~\ref{rem:246} (a). with the choice $a_n=n^{1/\tau}$.
We analyse the analogues of $\E[D_n]$, $\P(D_n^*\ge a_n)$, $\E[\1_{\{D_n^* < a_n\}} D_n^*]$ and $d_\mathrm{TV}(D_n^*,D^*)$ which are now random and which depend on the degree sequence $\mathbf d^{(n)}$. We need to consider
$$
\mathfrak m_1^{(n)}=\frac 1n \sum_{k=1}^n d_k^{(n)},\ \mathfrak p_n=\frac 1{n\mathfrak m_1^{(n)} }\sum_{k=1}^n \1_{\{d_k^{(n)}\ge a_n\}}  d_k^{(n)},\
 \mathfrak m_2^{(n)} = \frac 1{n\mathfrak m_1^{(n)} }\sum_{k=1}^n\1_{\{d_k^{(n)}<a_n\}}  (d_k^{(n)})^2
 $$
 and
 $$\mathfrak d^{(n)}=\ \frac12 \sum_{k=1}^\infty \Bigl| \frac 1{n \mathfrak m_1^{(n)}} k \sum_{r=1}^n \1_{\{d_r^{(n)}=k\}} - \frac 1{\E[D]} kp_k\Bigr|
$$
First note that by the strong law of large numbers
$$
\mathfrak m_1^{(n)}=\frac 1n \sum_{k=1}^n d_k^{(n)}\to \E[D]>0, \text{ almost surely}.
$$
Moreover, 
$$
\E\Bigl[\frac1n \sum_{k=1}^n \1_{\{d_k^{(n)}\ge a_n\}}  d_k^{(n)} \Bigr]\le \frac 1n + \E[\1_{\{q \ge a_n-1\}}q] = \cO(n^{-\frac{\tau-2}{\tau}})
$$
so that  $\mathfrak p_n=\cO_P(n^{-\frac{\tau-2}{\tau}})$.
Next, analyse the third term:
$$
\mathfrak m_2^{(n)}= \frac 1{n \,\mathfrak m_1^{(n)}}\sum_{k=1}^n \1_{\{d_k^{(n)}<a_n\}} (d_k^{(n)})^2\le  \frac 1{n \,\mathfrak m_1^{(n)}}\Bigl(\sum_{k=1}^n \1_{\{q_k<a_n\}} q_k^2 +2q_n+1\Bigr).
$$
Standard arguments yield that
$$\frac 1n \E\Bigl[\sum_{k=1}^n \1_{\{q_k<a_n\}} q_k^2 \Bigr] =\E\bigl[ \1_{\{q<a_n\}} q^2\bigr]=\begin{cases} \cO(n^{\frac {3-\tau}{\tau}}), &\text{ if }\tau <3,\\
\cO(\log n), & \text{ if }\tau=3,\\
\cO(1), & \text{ if }\tau>3,
\end{cases}
$$
so that 
$$
\mathfrak m_2^{(n)}=\begin{cases} \cO_P(n^{\frac {3-\tau}{\tau}}), &\text{ if }\tau <3,\\
\cO_P(\log n), & \text{ if }\tau=3,\\
\cO_P(1), & \text{ if }\tau>3.
\end{cases}
$$
In the case that $\sum_{r=1}^n q_r\ge1$ we further have that
\begin{align*}
\mathfrak d^{(n)}=&\ \frac12 \sum_{k=1}^\infty \Bigl| \frac 1{n \mathfrak m_1^{(n)}} k \sum_{r=1}^n \1_{\{d_r^{(n)}=k\}} - \frac 1{\E[D]} kp_k\Bigr|\\
\le &\ \frac12 \frac 1{\E[D]} \sum_{k=1}^\infty  k\Bigl| \frac 1n \sum_{r=1}^n \1_{\{d_r^{(n)}=k\}} -  p_k\Bigr|+\frac12 \Bigl|1-\frac {\mathfrak m_1^{(n)}}{\E[D]}\Bigr| \\
\le&\  \frac12 \frac 1{\E[D]} \sum_{k=1}^\infty  k\Bigl| \frac 1n \sum_{r=1}^n \1_{\{q_r=k\}} -  p_k\Bigr|+\frac 1{\E[D] \,n} (q_n+1)+\frac12 \Bigl|1-\frac {\mathfrak m_1^{(n)}}{\E[D]}\Bigr| 
\end{align*}
Letting $\bar S^{(n)}_k= \frac 1n \sum_{r=1}^n \1_{\{q_r=k\}}$ we get that 
$$
\E[|\bar S^{(n)}_k-p_k|]\le \var(\bar S^{(n)})^{1/2}\wedge (2p_k) \le \sqrt{\frac{p_k}{n}}\wedge (2p_k).
$$
Hence,
$$
\E\Bigl[\sum_{k=1}^\infty k |\bar S_k^{(n)}-p_k|\Bigr]\le \sum_{k=1}^\infty k \sqrt{\frac{p_k}{n}}\wedge (2p_k)=\begin{cases} \cO(n^{-\frac {\tau-2}\tau}), & \text{ if } \tau<4,\\
\cO(n^{-1/2} \log n), &\text{ if } \tau=4,\\
\cO(n^{-1/2}), &\text{ if }\tau >4,
\end{cases}
$$
where the $\cO$ estimates follow with standard computations.
In the case that $\tau>3$ the random variables $q_1,\dots$ have a finite second moment and we get that
$$
|\mathfrak m_1^{(n)}-\E[D]|=\cO_P(n^{-1/2}).
$$
If $\tau<3$, we let $\bar {\mathfrak m}_1^{(n)}=\frac 1n \sum_{k=1}^n q_k \1_{\{q_k\le \lceil n^{1/(\tau-1)}\rceil\}} $
\begin{align*}
\E[|\mathfrak m_1^{(n)}- \E[D]|]&\le \E\bigl[\mathfrak m_1^{(n)}-\bar {\mathfrak m}_1^{(n)}\bigr] +\E\bigl[\bigl|\bar {\mathfrak m}_1^{(n)}- \E[\bar {\mathfrak m}_1^{(n)}]\bigr|\bigr]\\
&\qquad + \E[D]- \E\bigl[\bar {\mathfrak m}_1^{(n)}\bigr].
\end{align*}
We bound the term in the middle by
$$
\E\bigl[\bigl|\bar {\mathfrak m}_1^{(n)}- \E[\bar {\mathfrak m}_1^{(n)}]\bigr|\bigr] \le \var (\bar {\mathfrak m}_1^{(n)})^{1/2} \le \frac 1{\sqrt n} \Bigl(\sum_{k=1}^{\lceil n^{1/(\tau-1)}\rceil} k^2 p_k\Bigr)^{1/2}
$$
and get with standard computations that all three terms are of order $\cO(n^{-\frac{\tau-2}{\tau-1}})$ which is of lower order than $n^{-\frac{\tau-2}\tau}$.
Consequently, 
$$
\mathfrak d^{(n)}=\begin{cases} \cO_P(n^{-\frac {\tau-2}{\tau}}), & \text{ if } \tau<4,\\
\cO_P(n^{-1/2}\log n), & \text{ if } \tau=4,\\
\cO_P(n^{-1/2}), & \text{ if }\tau>4.\end{cases}
$$
We can couple $\mathbf d^{(1)},\mathbf d^{(2)},\dots$ in such a way that the above $\cO_P$ estimates hold actually almost surely as $\cO$-estimates  (in the sense that the constant is allowed to be random). For the appropriately coupled sequences one gets that, almost surely,
$$
\ell_n \mathfrak p_n\to 0, \ \ell_n \frac {\mathfrak m^{(n)}_2}{\sqrt n}\to 0 \text{ \ and \ } \ell_n \mathfrak d^{(n)}\to 0.
$$
Choosing $G_n^{o_n}$ as random graph with fixed degree sequence $\mathbf d^{(n)}$ we can apply Theorem~\ref{theo:12} $\omega$-wise and get the result.
%
%
%
\end{proof}

\end{appendix}

\bibliography{bibliografie}
\bibliographystyle{alpha}

\end{document}